\newtheorem{Example}{Example}[section]
\newtheorem{Definition}{Definition}[section]
\newtheorem{Theorem}{Theorem}[section]
\newtheorem{Theorem/Definition}{Theorem/Definition}[section]
\newtheorem{Proposition}{Proposition}[section]
\newtheorem{Lemma}{Lemma}[section]
\newcommand{\bK}{{\mathbb K}}
\newcommand{\bQ}{{\mathbb Q}}
\newcommand{\bR}{{\mathbb R}}
\newcommand{\bZ}{{\mathbb Z}}
\newcommand{\cF}{{\mathcal F}}
\newcommand{\cG}{{\mathcal G}}
\newcommand{\cM}{{\mathcal M}}
\newcommand{\half}{\frac{1}{2}}
\newcommand{\cV}{{\mathcal V}}
\newcommand{\Mbar}{\overline{\cM}}
\newcommand{\wF}{{\widehat F}}
\newcommand{\wcF}{{\widehat{\mathcal{F}}}}
\newcommand{\tF}{{\widetilde F}}
\newcommand{\be}{\begin{equation}}
\newcommand{\ee}{\end{equation}}
\newcommand{\bea}{\begin{eqnarray}}
\newcommand{\ben}{\begin{eqnarray*}}
\newcommand{\een}{\end{eqnarray*}}
\newcommand{\eea}{\end{eqnarray}}
\DeclareMathOperator{\Aut}{Aut}
\DeclareMathOperator{\Id}{id}
\DeclareMathOperator{\ext}{ext}
\DeclareMathOperator{\val}{val}
\DeclareMathOperator{\genus}{genus}
\DeclareMathOperator{\Ver}{Ver}
\definecolor{yellow}{rgb}{1,1,0}
\definecolor{orange}{rgb}{1,.7,0}
\definecolor{red}{rgb}{1,0,0} \definecolor{green}{rgb}{0,1,1}
\definecolor{white}{rgb}{1,1,1}
\definecolor{A}{rgb}{.75,1,.75}
\theoremstyle{remark}
\newtheorem{Remark}{Remark}[section]
\begin{document}

\newtheorem{myDef}{Definition}
\newtheorem{thm}{Theorem}
\newtheorem{eqn}{equation}

\title[M\"obius Inversion and Duality for Summations of Stable Graphs]
{M\"obius Inversion and Duality for Summations of Stable Graphs}

\author{Zhiyuan Wang}
\address{School of Mathematics and Statistics\\
Huazhong University of Science and Technology\\Wuhan, 430074, China}
\email{wangzy23@hust.edu.cn}

\author{Jian Zhou}
\address{Department of Mathematical Sciences\\
Tsinghua University\\Beijing, 100084, China}
\email{jianzhou@mail.tsinghua.edu.cn}

\begin{abstract}
Using the stratifications of Deligne-Mumford moduli spaces $\overline{\mathcal M}_{g,n}$
indexed by stable graphs,
we introduce a partially ordered set of stable graphs
by defining a partial ordering on the set of connected stable graphs
of genus $g$ with $n$ external edges.
By modifying the usual definition of zeta function and M\"obius function of a poset,
we introduce generalized ($\mathbb Q$-valued)
zeta function and generalized ($\mathbb Q$-valued) M\"obius function of
the poset of stable graphs.
We use them to proved a generalized M\"obius inversion formula for functions
on the poset of stable graphs.
Two applications related to duality in earlier work are also presented.
\end{abstract}

\maketitle

\section{Introduction}

As the title indicates,
the subject of this work lies in the realm of building some bridges
between some techniques in combinatorics and some techniques in mathematical physics.
The theory of M\"obius inversions
is the subject of the first and the sixth papers in the famous series entitled ``On the foundations
of combinatorial theory" by Rota and his collaborators \cite{ro, drs}.
Stable graphs are originally used in algebraic geometry to describe
the boundary strata of the Deligne-Mumford moduli spaces of algebraic curves \cite{dm, kn}.
See also \cite[Section XII.10]{ACGH2}.
Later they have also been widely used in string theory.
Usually in quantum field theory,
physically important objects are often expressed as a summation over some
Feynman graphs with some specific Feynman rules.
Because algebraic curves are indispensable in string theory,
it is not unexpected that in string theory summations over stable graphs
with suitable Feynman rules are useful.
See for example \cite{bcov2}.
In the reversed direction \cite{bh},
summations over stable graphs with Feynman rules
specified by the Harer-Zagier formula for the orbifold Euler characteristics $\cM_{g,n}$
have been used to express the orbifold Euler characteristics of $\Mbar_{g,n}$.

The connection between M\"obius inversion and summation of stable graphs is
an unexpected byproduct of our earlier work on the formalism  of abstract quantum field
theories based on summation of stable graphs.
In many examples in the literature
partition functions or $n$-point correlation functions are expressed as summation over
stable graphs, whereas difference between different theories is just their different
Feynman rules. Furthermore, these partition functions or correlation  functions
are often computed by similar recursion relations.
These fact inspire us to introduce the formalism of ``abstract" quantum field theory
based on stable graphs,
meaning we define abstract partition functions and correlation functions
as formal sums {\em of} stable graphs without specifying any Feynman rule.
We then use simple operations on the set of stable graphs such as edge cutting
to derive some universal recursion relations.
When a concrete theory involving summations over stable graphs
can be obtained by specifying some concrete Feynman rule,
and the recursion relations in this theory can be derived from the
universal recursion relations,
we say this theory is a {\em realization} of the abstract theory.

One of the applications of this formalism is the holomorphic anomaly equation \cite{bcov1, bcov2} as reformulated
by \cite{abk}.
To apply our formalism to this setting in the most general case,
it turns out to be necessary to develop a duality theory of stable graphs \cite{wz2}.
Another application of our formalism is the computations of the orbifold Euler characteristics $\chi(\Mbar_{g,n})$
from the Harer-Zagier formula for $\chi(\cM_{g,n})$.
In this setting we have found an inversion formula that reversely expresses
$\chi(\cM_{g,n})$ in terms of $\chi(\Mbar_{g,n})$.
This can be understood as an open-closed duality in string theory.

The purpose of this paper is to interpret such dualities
in terms of M\"obius inversion formula on locally finite partially ordered sets
as developed in \cite{ro, drs}.
For this purpose,
we introduce a partial ordering on the set of stable graphs
by edge contraction.
Next we modify the usual definition of the zeta-function and the M\"obius function
 on locally finite poset  to define generalized zeta-function
 and the M\"obius function for the poset of stable graphs.
The reason why the modifications are necessary is due to the fact that
when we deal with stable graphs
we are actually working with isomorphism classes of stable graphs,
and hence when an enumeration problem is involved,
it is more suitable to take into account the orders of automorphism groups.
Using these  generalizations we then prove the generalized M\"obius inversion
formula for the poset of stable graphs.

We arrange the rest of the paper as follows.
In Section \ref{sec-pre} we recall the notion of stable graphs and its relation with Deligne-Mumford
moduli spaces.
We also recall our formalism of abstract quantum field theory of summation of stable graphs
and its duality theory.
In Section \ref{sec-inversion-Mob} we first recall the theory of M\"obius inversion
on locally finite posets.
Then we introduce a partial ordering on the set of isomorphism classes of stable graphs.
We next define  generalized zeta function, generalized M\"obius function,
and prove the generalized M\"obius inversion formula, for this poset.
The remaining two sections deal with applications.
The application to the duality of stable graphs developed in \cite{wz2} is
presented in Section \ref{realization-duality}.
The application to the open-closed duality of orbifold Euler characteristics
of $\cM_{g,n}$ and $\Mbar_{g,n}$ is presented in Section \ref{sec-O-C}.

\section{Preliminaries of Duality in Abstract QFT for Stable Graphs}
\label{sec-pre}

In this section,
we recall some preliminaries of
the abstract quantum field theory for stable graphs developed in \cite{wz}
and the duality theory for stable graphs developed in \cite{wz2}.

\subsection{Stable graphs and stratification of $\Mbar_{g,n}$}
\label{sec-pre-absqft}

In this subsection we recall the definition of stable graphs.

A stable graph $\Gamma$ consists of the following data:
\begin{itemize}
\item[1)]
A set of vertices $V(\Gamma)$,
together with a non-negative number $g_v\in \bZ_{\geq 0}$
(called `genus' of this vertex)
associated to each vertex $v\in V(\Gamma)$.
\item[2)]
A set of internal edges $E(\Gamma)$
connecting the vertices.
If the two endpoints of $e\in E(\Gamma)$ is attached to a same vertex,
then this edge is called a loop.
\item[3)]
A set of external edges $E^{\ext}(\Gamma)$.
Each external edges is attached to a vertex.
\end{itemize}

A half-edge is either an external edge,
or an internal edge together with a choice of an end point.
We will denoted by $H(\Gamma)$ the set of all half-edges in $\Gamma$,
and by $H(v)$ the set of all half-edges attached to the vertex $v\in V(\Gamma)$.
The number $\val_v:= |H(v)|$ will be called the valence of $v$.
Then a stable graph is defined to be a graph satisfying the following stability condition:
\be
\label{eq-stable-def}
2g_v -2 + \val_v >0,
\qquad\forall v\in V(\Gamma).
\ee

The genus of a connected stable graph $\Gamma$ is defined to be:
\ben
\genus(\Gamma):= h^1(\Gamma)+ \sum_{v\in V(\Gamma)}g_v,
\een
where $h^1(\Gamma)$ is the number of independent loops in $\Gamma$.
Furthermore,
if $\Gamma=\Gamma_1\sqcup\cdots\sqcup \Gamma_k$ is a disconnected graph
where $\Gamma_1,\cdots,\Gamma_k$ are the connected components,
then the genus of $\Gamma$ is defined to be:
\be
\label{eq-genus-disconn}
\genus(\Gamma):=\genus(\Gamma_1)+\cdots +\genus(\Gamma_k)-k+1.
\ee
In what follows,
we will denote:
\be
\begin{split}
&\cG_{g,n}:= \{
\text{stable graphs of genus $g$ with $n$ external edges}
\};\\
&\cG_{g,n}^c:= \{
\text{connected stable graphs of genus $g$ with $n$ external edges}
\},
\end{split}
\ee
where $2g-2+n>0$ (by the stability condition).

It is known that stable graphs describe
the stratification of the Deligne-Mumford moduli space $\Mbar_{g,n}$ of stable curves,
see \cite{dm, kn}.
In the rest of this subsection let us briefly recall this stratification.
Let $\cM_{g,n}$ be the moduli space of connected smooth stable curves of genus $g$ with $n$ marked points,
and let $\Mbar_{g,n}$ be the Deligne-Mumford compactification of $\cM_{g,n}$.
Then:
\begin{equation*}
\Mbar_{g,n}=\big\{(\Sigma;x_1,x_2,\cdots,x_n)\big\}/\sim,
\end{equation*}
where $\Sigma$ is a connected algebraic curve whose singular points are nodal points,
with $n$ distinct smooth points $x_1,\cdots,x_n\in \Sigma$ (called marked points) on it,
satisfying the following stability condition:
there are at least three special points (marked points or nodes)
on each irreducible component of genus $0$,
and at least one special point
on each irreducible component of genus $1$.
The equivalence relation $\sim$ between two stable curves
$(\Sigma;x_1,\cdots,x_n)$ and $(\Sigma';x_1',\cdots,x_n')$ is given by
a biholomorphic map
\ben
f:\Sigma\to \Sigma',
\een
such that $f(x_i)=x_i'$ for every $i$.
The moduli space $\Mbar_{g,n}$ is a complex orbifold of dimension $3g-3+n$.
Notice that the symmetric group $S_n$ acts naturally on $\Mbar_{g,n}$
by permuting the markings,
and we will regard $\Mbar_{g,n}/S_n$ as the moduli space
of stable curves on which we do not distinguish the $n$ marked points.

Given a connected stable curve of genus $g$ with $n$ marked points,
one can associate a stable graph in $\cG_{g,n}^c$ in the following way:
\begin{itemize}
\item[1)]
An irreducible component $C$ of genus $g$ $\mapsto$ a vertex $v_C$ of genus $g$;
\item[2)]
A nodal point where the components $C_1$ and $C_2$ intersects
($C_1$, $C_2$ may not be distinct) $\mapsto$
an internal edge connecting $v_{C_1}$ and $v_{C_2}$;
\item[3)]
A marked point on the component $C$ $\mapsto$
an external edge attached to $v_C$.
\end{itemize}
Now given a stable graph $\Gamma$,
denote by $\cM_\Gamma$ the moduli space of stable curves
whose dual graph is $\Gamma$.
Then there is a natural stratification of $\Mbar_{g,n}/S_n$:
\ben
\Mbar_{g,n}/S_n=\bigsqcup_{\Gamma\in\cG_{g,n}^c}\cM_\Gamma.
\een
Here $\cM_\Gamma$ is a locally closed strata of codimension $|E(\Gamma)|$
in $\Mbar_{g,n}/S_n$.

\subsection{Abstract free energies and abstract $n$-point functions}

In this subsection,
we review the construction of the
abstract free energies and abstract $n$-point functions.
They are certain linear combinations of stable graphs,
see \cite[\S 2]{wz}.

First let us recall the automorphisms of stable graphs.
Let $\Gamma$ be a stable graph,
then an automorphism $\phi$ of $\Gamma$ consists of two bijections
$\phi_V:V(\Gamma)\to V(\Gamma)$ and $\phi_H:H(\Gamma)\to H(\Gamma)$,
satisfying:
\begin{itemize}
\item[1)]
For every $v\in V(\Gamma)$,
the vertices $\phi_V(v)$ and $v$ are of the same genus;
\item[2)]
If a half-edge $h$ is attached to $v\in V(\Gamma)$,
then $\phi_H(h)$ is attached to $\phi_V(v)$;
\item[3)]
If $h_1,h_2\in H(\Gamma)$ are the two half-edges of an internal edge,
then so are $\phi_H(h_1)$ and $\phi_H(h_2)$.
\end{itemize}
Given a stable graph $\Gamma$,
denote by $\Aut(\Gamma)$ the set of automorphisms of $\Gamma$.

\begin{Definition}
[\cite{wz}]
\label{def-absnpt}

Let $(g,n)$ be a pair of non-negative integers with $2g-2+n>0$.
Define the abstract $n$-point function $\wcF_{g,n}$
to be the following formal summation of stable graphs:
\be
\label{eq-def-absnpt}
\wcF_{g,n}:=\sum_{\Gamma\in\cG_{g,n}^c}
\frac{1}{|\Aut(\Gamma)|}\Gamma.
\ee
For $g\geq 2$,
the abstract free energy of genus $g$ is defined to be:
\be
\label{eq-def-absfe}
\wcF_g := \wcF_{g,0} = \sum_{\Gamma\in\cG_{g,0}^c}
\frac{1}{|\Aut(\Gamma)|}\Gamma.
\ee
\end{Definition}

The stability condition \eqref{eq-stable-def} ensures that
\eqref{eq-def-absnpt} and \eqref{eq-def-absfe} are finite summations.
Thus $\wcF_{g,n}^c$ is a vector in:
\be
\label{eq-vectorspace-cV}
\cV_{g,n}^c:=
\bigoplus_{\Gamma\in\cG_{g,n}^c}\bQ\Gamma
\quad\subset \quad
\cV_{g,n}:=\bigoplus_{\Gamma\in\cG_{g,n}}\bQ\Gamma.
\ee

\begin{Example}
\label{eg-abstract-fe}
We have:
\begin{flalign*}
\begin{tikzpicture}[scale=0.92]
\node [align=center,align=center] at (0.3,0) {$\widehat{\cF}_{0,3}=\frac{1}{6}$};
\draw (1.6,0) circle [radius=0.2];
\draw (1.1,0)--(1.4,0);
\draw (1.76,0.1)--(2,0.15);
\draw (1.76,-0.1)--(2,-0.15);
\node [align=center,align=center] at (1.6,0) {$0$};
\node [align=center,align=center] at (2.35,-0.15) {$,$};
\end{tikzpicture}&&
\end{flalign*}
\begin{flalign*}
\begin{tikzpicture}[scale=0.92]
\node [align=center,align=center] at (0.3,0) {$\widehat{\cF}_{0,4}=\frac{1}{24}$};
\draw (1.6,0) circle [radius=0.2];
\draw (1.1,0.15)--(1.44,0.1);
\draw (1.1,-0.15)--(1.44,-0.1);
\draw (1.76,0.1)--(2.1,0.15);
\draw (1.76,-0.1)--(2.1,-0.15);
\node [align=center,align=center] at (1.6,0) {$0$};
\node [align=center,align=center] at (2.6,0) {$+\frac{1}{8}$};
\draw (1.6+1.9,0) circle [radius=0.2];
\draw (2.2+1.9,0) circle [radius=0.2];
\draw (1.1+1.9,0.15)--(1.44+1.9,0.1);
\draw (1.1+1.9,-0.15)--(1.44+1.9,-0.1);
\draw (1.1+1.9,0.15)--(1.44+1.9,0.1);
\draw (2.36+1.9,0.1)--(2.7+1.9,0.15);
\draw (2.36+1.9,-0.1)--(2.7+1.9,-0.15);
\draw (1.8+1.9,0)--(2+1.9,0);
\node [align=center,align=center] at (1.6+1.9,0) {$0$};
\node [align=center,align=center] at (2.2+1.9,0) {$0$};
\node [align=center,align=center] at (4.95,-0.15) {$,$};
\end{tikzpicture}&&
\end{flalign*}
\begin{flalign*}
\begin{tikzpicture}[scale=0.92]
\node [align=center,align=center] at (0.3-0.1,0) {$\widehat{\cF}_{0,5}=\frac{1}{120}$};
\draw (1.6,0) circle [radius=0.2];
\draw (1.1,0.15)--(1.44,0.1);
\draw (1.1,-0.15)--(1.44,-0.1);
\draw (1.76,0.1)--(2.1,0.15);
\draw (1.76,-0.1)--(2.1,-0.15);
\draw (1.6,0.2)--(1.6,0.4);
\node [align=center,align=center] at (1.6,0) {$0$};
\node [align=center,align=center] at (2.6,0) {$+\frac{1}{8}$};
\draw (1.6+1.9,0) circle [radius=0.2];
\draw (2.2+1.9,0) circle [radius=0.2];
\draw (2.8+1.9,0) circle [radius=0.2];
\draw (1.1+1.9,0.15)--(1.44+1.9,0.1);
\draw (1.1+1.9,-0.15)--(1.44+1.9,-0.1);
\draw (1.1+1.9,0.15)--(1.44+1.9,0.1);
\draw (2.96+1.9,0.1)--(3.3+1.9,0.15);
\draw (2.96+1.9,-0.1)--(3.3+1.9,-0.15);
\draw (1.8+1.9,0)--(2+1.9,0);
\draw (2.4+1.9,0)--(2.6+1.9,0);
\draw (2.2+1.9,0.2)--(2.2+1.9,0.4);
\node [align=center,align=center] at (1.6+1.9,0) {$0$};
\node [align=center,align=center] at (2.2+1.9,0) {$0$};
\node [align=center,align=center] at (2.8+1.9,0) {$0$};
\node [align=center,align=center] at (5.8,0) {$+\frac{1}{12}$};
\draw (1.6+5.1,0) circle [radius=0.2];
\draw (2.2+5.1,0) circle [radius=0.2];
\draw (1.1+5.1,0.15)--(1.44+5.1,0.1);
\draw (1.1+5.1,-0.15)--(1.44+5.1,-0.1);
\draw (1.1+5.1,0.15)--(1.44+5.1,0.1);
\draw (2.36+5.1,0.1)--(2.7+5.1,0.15);
\draw (2.36+5.1,-0.1)--(2.7+5.1,-0.15);
\draw (1.8+5.1,0)--(2+5.1,0);
\draw (2.4+5.1,0)--(2.7+5.1,0);
\node [align=center,align=center] at (1.6+5.1,0) {$0$};
\node [align=center,align=center] at (2.2+5.1,0) {$0$};
\node [align=center,align=center] at (8.2,-0.15) {$,$};
\end{tikzpicture}&&
\end{flalign*}
\begin{flalign*}
\begin{tikzpicture}[scale=0.92]
\node [align=center,align=center] at (0.3-0.2,0) {$\widehat{\cF}_{1,1}=$};
\draw (1,0) circle [radius=0.2];
\draw (1.2,0)--(1.5,0);
\node [align=center,align=center] at (1,0) {$1$};
\node [align=center,align=center] at (2,0) {$+\frac{1}{2}$};
\draw (1+1.8,0) circle [radius=0.2];
\draw (1.2+1.8,0)--(1.5+1.8,0);
\draw (0.84+1.8,0.1) .. controls (0.5+1.8,0.2) and (0.5+1.8,-0.2) ..  (0.84+1.8,-0.1);
\node [align=center,align=center] at (1+1.8,0) {$0$};
\node [align=center,align=center] at (3.6,-0.15) {$,$};
\end{tikzpicture}&&
\end{flalign*}
\begin{flalign*}
\begin{tikzpicture}[scale=0.92]
\node [align=center,align=center] at (0.3-0.65,0) {$\widehat{\cF}_{1,2}=\frac{1}{2}$};
\draw (1,0) circle [radius=0.2];
\draw (1.2,0)--(1.5,0);
\draw (0.5,0)--(0.8,0);
\node [align=center,align=center] at (1,0) {$1$};
\node [align=center,align=center] at (2,0) {$+\frac{1}{4}$};
\draw (1+1.8,0) circle [radius=0.2];
\draw (1.17+1.8,0.1)--(1.4+1.8,0.15);
\draw (1.17+1.8,-0.1)--(1.4+1.8,-0.15);
\draw (0.84+1.8,0.1) .. controls (0.5+1.8,0.2) and (0.5+1.8,-0.2) ..  (0.84+1.8,-0.1);
\node [align=center,align=center] at (1+1.8,0) {$0$};
\node [align=center,align=center] at (3.7,0) {$+\frac{1}{2}$};
\draw (1+3.9,0) circle [radius=0.2];
\draw (0.4+3.9,0) circle [radius=0.2];
\draw (1.17+3.9,0.1)--(1.4+3.9,0.15);
\draw (1.17+3.9,-0.1)--(1.4+3.9,-0.15);
\draw (0.6+3.9,0)--(0.8+3.9,0);
\node [align=center,align=center] at (1+3.9,0) {$0$};
\node [align=center,align=center] at (0.4+3.9,0) {$1$};
\node [align=center,align=center] at (5.8,0) {$+\frac{1}{4}$};
\draw (1+6.2,0) circle [radius=0.2];
\draw (0.4+6.2,0) circle [radius=0.2];
\draw (1.17+6.2,0.1)--(1.4+6.2,0.15);
\draw (1.17+6.2,-0.1)--(1.4+6.2,-0.15);
\draw (0.6+6.2,0)--(0.8+6.2,0);
\node [align=center,align=center] at (1+6.3-0.1,0) {$0$};
\node [align=center,align=center] at (0.4+6.3-0.1,0) {$0$};
\draw (0.24+6.3-0.1,0.1) .. controls (-0.1+6.3-0.1,0.2) and (-0.1+6.3-0.1,-0.2) ..  (0.24+6.3-0.1,-0.1);
\node [align=center,align=center] at (8.2-0.1,0) {$+\frac{1}{4}$};
\draw (1+8.1-0.1,0) circle [radius=0.2];
\draw (0.5+8.1-0.1,0)--(0.8+8.1-0.1,0);
\draw (1.18+8.1-0.1,0.07)--(1.42+8.1-0.1,0.07);
\draw (1.18+8.1-0.1,-0.07)--(1.42+8.1-0.1,-0.07);
\draw (1.8+8.1-0.1,-0)--(2.1+8.1-0.1,0);
\draw (1.6+8.1-0.1,0) circle [radius=0.2];
\node [align=center,align=center] at (1+8.1-0.1,0) {$0$};
\node [align=center,align=center] at (1.6+8.1-0.1,0) {$0$};
\node [align=center,align=center] at (10.5,-0.15) {$,$};
\end{tikzpicture}&&
\end{flalign*}
\begin{flalign*}
\begin{tikzpicture}[scale=0.92]
\node [align=center,align=center] at (0.1+0.4,0) {$\widehat{\cF}_{2}=$};
\draw (1+0.3,0) circle [radius=0.2];
\node [align=center,align=center] at (1+0.3,0) {$2$};
\node [align=center,align=center] at (1.6+0.2,0) {$+\frac{1}{2}$};
\draw (1+1.4+0.2,0) circle [radius=0.2];
\draw (0.84+1.4+0.2,0.1) .. controls (0.5+1.4+0.2,0.2) and (0.5+1.4+0.2,-0.2) ..  (0.84+1.4+0.2,-0.1);
\node [align=center,align=center] at (1+1.4+0.2,0) {$1$};
\node [align=center,align=center] at (3+0.2,0) {$+\frac{1}{2}$};
\draw (1+2.6+0.2,0) circle [radius=0.2];
\draw (1.2+2.6+0.2,0)--(1.4+2.6+0.2,0);
\draw (1.6+2.6+0.2,0) circle [radius=0.2];
\node [align=center,align=center] at (1+2.6+0.2,0) {$1$};
\node [align=center,align=center] at (1.6+2.6+0.2,0) {$1$};
\node [align=center,align=center] at (5,0) {$+\frac{1}{8}$};
\draw (1+4.8,0) circle [radius=0.2];
\draw (0.84+4.8,0.1) .. controls (0.5+4.8,0.2) and (0.5+4.8,-0.2) ..  (0.84+4.8,-0.1);
\draw (1.16+4.8,0.1) .. controls (1.5+4.8,0.2) and (1.5+4.8,-0.2) ..  (1.16+4.8,-0.1);
\node [align=center,align=center] at (1+4.8,0) {$0$};
\node [align=center,align=center] at (6.6,0) {$+\frac{1}{2}$};
\draw (1+6.8,0) circle [radius=0.2];
\draw (0.4+6.8,0) circle [radius=0.2];
\draw (0.6+6.8,0)--(0.8+6.8,0);
\draw (1.16+6.8,0.1) .. controls (1.5+6.8,0.2) and (1.5+6.8,-0.2) ..  (1.16+6.8,-0.1);
\node [align=center,align=center] at (1+6.8,0) {$0$};
\node [align=center,align=center] at (0.4+6.8,0) {$1$};
\node [align=center,align=center] at (8.6,0) {$+\frac{1}{8}$};
\draw (1+9,0) circle [radius=0.2];
\draw (0.4+9,0) circle [radius=0.2];
\draw (0.6+9,0)--(0.8+9,0);
\draw (1.16+9,0.1) .. controls (1.5+9,0.2) and (1.5+9,-0.2) ..  (1.16+9,-0.1);
\draw (0.24+9,0.1) .. controls (-0.1+9,0.2) and (-0.1+9,-0.2) ..  (0.24+9,-0.1);
\node [align=center,align=center] at (1+9,0) {$0$};
\node [align=center,align=center] at (0.4+9,0) {$0$};
\node [align=center,align=center] at (10.6+0.2,0) {$+\frac{1}{12}$};
\draw (1+10.2+0.2,0) circle [radius=0.2];
\draw (1.2+10.2+0.2,0)--(1.4+10.2+0.2,0);
\draw (1.16+10.2+0.2,0.1)--(1.44+10.2+0.2,0.1);
\draw (1.16+10.2+0.2,-0.1)--(1.44+10.2+0.2,-0.1);
\draw (1.6+10.2+0.2,0) circle [radius=0.2];
\node [align=center,align=center] at (1+10.2+0.2,0) {$0$};
\node [align=center,align=center] at (1.6+10.2+0.2,0) {$0$};
\node [align=center,align=center] at (12.45,-0.15) {$.$};
\end{tikzpicture}&&
\end{flalign*}
\end{Example}

\subsection{Dotted stable vertices and dotted stable graphs}
\label{sec-pre-dotted}

In this subsection we recall the construction of dotted stable graphs.
A dotted stable graph is a stable graph whose vertices and edges are drawn
using dotted lines,
and we use such a graph to represent a linear combination of stable graphs in the usual sense.
The notions such as the genus, valence, stability, and automorphisms for a dotted stable graph
are defined as usual.
See \cite[\S 3]{wz2} for details.

First let us recall the definition of dotted stable vertices.
Given a dotted stable vertex of genus $g$ and valence $n$,
we define it to be the linear combination $n!\cdot \wcF_{g,n} \in \cV_{g,n}^c$
of graphs in the usual sense.

\begin{Example}
We have (see Example \ref{eg-abstract-fe}):
\begin{equation*}
\begin{split}
&\begin{tikzpicture}
\draw [dotted,thick](0,0) circle [radius=0.2];
\node [align=center,align=center] at (0,0) {$0$};
\draw [dotted,thick](-0.5,0)--(-0.2,0);
\draw [dotted,thick](0.16,0.1)--(0.5,0.15);
\draw [dotted,thick](0.16,-0.1)--(0.5,-0.15);
\node [align=center,align=center] at (0.8,0) {$=$};
\draw (0+1.6,0) circle [radius=0.2];
\node [align=center,align=center] at (0+1.6,0) {$0$};
\draw (-0.5+1.6,0)--(-0.2+1.6,0);
\draw (0.16+1.6,0.1)--(0.5+1.6,0.15);
\draw (0.16+1.6,-0.1)--(0.5+1.6,-0.15);
\node [align=center,align=center] at (2.5,-0.15) {$,$};
\end{tikzpicture}\\
&\begin{tikzpicture}
\draw [dotted,thick](1.6-1.6,0) circle [radius=0.2];
\draw [dotted,thick](1.1-1.6,0.15)--(1.44-1.6,0.1);
\draw [dotted,thick](1.1-1.6,-0.15)--(1.44-1.6,-0.1);
\draw [dotted,thick](1.76-1.6,0.1)--(2.1-1.6,0.15);
\draw [dotted,thick](1.76-1.6,-0.1)--(2.1-1.6,-0.15);
\node [align=center,align=center] at (1.6-1.6,0) {$0$};
\node [align=center,align=center] at (0.8,0) {$=$};
\draw (1.6+0.1,0) circle [radius=0.2];
\draw (1.1+0.1,0.15)--(1.44+0.1,0.1);
\draw (1.1+0.1,-0.15)--(1.44+0.1,-0.1);
\draw (1.76+0.1,0.1)--(2.1+0.1,0.15);
\draw (1.76+0.1,-0.1)--(2.1+0.1,-0.15);
\node [align=center,align=center] at (1.6+0.1,0) {$0$};
\node [align=center,align=center] at (2.6,0) {$+3$};
\draw (1.6+1.9,0) circle [radius=0.2];
\draw (2.2+1.9,0) circle [radius=0.2];
\draw (1.1+1.9,0.15)--(1.44+1.9,0.1);
\draw (1.1+1.9,-0.15)--(1.44+1.9,-0.1);
\draw (1.1+1.9,0.15)--(1.44+1.9,0.1);
\draw (2.36+1.9,0.1)--(2.7+1.9,0.15);
\draw (2.36+1.9,-0.1)--(2.7+1.9,-0.15);
\draw (1.8+1.9,0)--(2+1.9,0);
\node [align=center,align=center] at (1.6+1.9,0) {$0$};
\node [align=center,align=center] at (2.2+1.9,0) {$0$};
\node [align=center,align=center] at (5,-0.15) {$,$};
\end{tikzpicture}\\
&\begin{tikzpicture}
\draw [dotted,thick](1-1.6,0) circle [radius=0.2];
\draw [dotted,thick](1.2-1.6,0)--(1.65-1.6,0);
\node [align=center,align=center] at (1-1.6,0) {$1$};
\node [align=center,align=center] at (0.3,0) {$=$};
\draw (1,0) circle [radius=0.2];
\draw (1.2,0)--(1.5,0);
\node [align=center,align=center] at (1,0) {$1$};
\node [align=center,align=center] at (2,0) {$+\frac{1}{2}$};
\draw (1+1.8,0) circle [radius=0.2];
\draw (1.2+1.8,0)--(1.5+1.8,0);
\draw (0.84+1.8,0.1) .. controls (0.5+1.8,0.2) and (0.5+1.8,-0.2) ..  (0.84+1.8,-0.1);
\node [align=center,align=center] at (1+1.8,0) {$0$};
\node [align=center,align=center] at (3.5,-0.15) {$.$};
\end{tikzpicture}
\end{split}
\end{equation*}
\end{Example}

Then we construct the dotted stable graphs by
suitably gluing these dotted stable vertex together along external edges,
and then multiplying by an additional factor $-1$ whenever a new internal edges are obtained.
Here `suitably' means gluing in all possible ways and then taking the average.

More precisely,
Given a dotted stable graph $\Gamma^\vee$,
we associate a linear combinations of graphs in the usual sense in the following way:
\begin{itemize}
\item[1)]
First we distinguish the half-edges of $\Gamma^\vee$ by giving every half-edge a name,
such that different half-edges have different names.
Then we cut off every internal edges of $\Gamma^\vee$ and obtain some dotted stable vertices
whose external edges have distinct names.

\item[2)]
Next,
we express each dotted stable vertex with names on external edges as a summation
over ordinary stable graphs whose external edges have inherited names.
Let $\Gamma'^\vee$ be a dotted stable vertex of genus $g$ and valence $n$
whose external edges have different names,
then we define it to be the following linear combination of graphs (in the usual sense)
whose external edges has names:
\begin{equation*}
\Gamma'^\vee=\sum_{\Gamma'}\frac{1}{|\Aut(\Gamma')|}\Gamma',
\end{equation*}
where the sum is over all possible stable graphs $\Gamma'$ of genus $g$ with $n$ external edges,
such that the external edges have the same $n$ different names as the external edges of $\Gamma'^\vee$.
The automorphisms of such $\Gamma'$ should preserves the names of
all the external edges,
hence they fix each of the external edges.

\item[3)]
Finally we glue the external edges of these ordinary stable graphs together
in such a way that two external edges are glued together if and only if
their ancestors  in $\Gamma^\vee$ are joined together.
Finally, we forget all these names,
and multiply by a factor $(-1)^{|E^\vee(\Gamma^\vee)|}$ to the whole expression,
where $E^\vee(\Gamma^\vee)$ is the set of all dotted internal edges of $\Gamma^\vee$.
In this way we obtain a linear combination
of ordinary stable graphs associated to
the dotted stable graph $\Gamma^\vee$.
In the following we will abuse the notations by using $\Gamma^\vee$
to denote both a dotted graph and this linear combination.
\end{itemize}

\begin{Example}
Consider the following dotted graph,
and we give names $a,b,c,d$ to its half-edges then cut the internal edge:
\begin{equation*}
\begin{tikzpicture}
\draw [dotted,thick](1.6-1.6,0) circle [radius=0.2];
\draw [dotted,thick](-0.16,0.1) .. controls (-0.5,0.2) and (-0.5,-0.2) ..  (-0.16,-0.1);
\draw [dotted,thick](1.76-1.6,0.1)--(2.1-1.6,0.15);
\draw [dotted,thick](1.76-1.6,-0.1)--(2.1-1.6,-0.15);
\node [align=center,align=center] at (1.6-1.6,0) {$0$};
\draw [dotted,thick](1.6-1.6+3,0) circle [radius=0.2];
\draw [dotted,thick](-0.16+3,0.1) .. controls (-0.5+3,0.2) and (-0.5+3,-0.2) ..  (-0.16+3,-0.1);
\draw [dotted,thick](1.76-1.6+3,0.1)--(2.1-1.6+3,0.15);
\draw [dotted,thick](1.76-1.6+3,-0.1)--(2.1-1.6+3,-0.15);
\node [align=center,align=center] at (1.6-1.6+3,0) {$0$};
\node [above,align=center] at (-0.16+3,0.1,0) {$a$};
\node [below,align=center] at (-0.16+3,-0.1,0) {$b$};
\node [align=center,right] at (2.1-1.6+3,0.15) {$c$};
\node [align=center,right] at (2.1-1.6+3,-0.15) {$d$};
\draw [dotted,thick](1.6-1.6+6,0) circle [radius=0.2];
\draw [dotted,thick](5.84,0.1)--(5.5,0.15);
\draw [dotted,thick](5.84,-0.1)--(5.5,-0.15);
\draw [dotted,thick](1.76-1.6+6,0.1)--(2.1-1.6+6,0.15);
\draw [dotted,thick](1.76-1.6+6,-0.1)--(2.1-1.6+6,-0.15);
\node [align=center,align=center] at (1.6-1.6+6,0) {$0$};
\node [align=center,left] at (5.5,0.15) {$a$};
\node [align=center,left] at (5.5,-0.15) {$b$};
\node [align=center,right] at (2.1-1.6+6,0.15) {$c$};
\node [align=center,right] at (2.1-1.6+6,-0.15) {$d$};
\node [align=center,right] at (1.3,0) {$\to$};
\node [align=center,right] at (4.2,0) {$\to$};
\node [align=center,right] at (7,-0.2) {$.$};
\end{tikzpicture}
\end{equation*}
Then by definition, the third graph above equals
\begin{equation*}
\begin{tikzpicture}
\draw (0,0) circle [radius=0.2];
\draw (-0.16,0.1)--(-0.5,0.15);
\draw (-0.16,-0.1)--(-0.5,-0.15);
\draw (0.16,0.1)--(0.5,0.15);
\draw (0.16,-0.1)--(0.5,-0.15);
\node [align=center,align=center] at (0,0) {$0$};
\node [align=center,left] at (-0.5,0.15) {$a$};
\node [align=center,left] at (-0.5,-0.15) {$b$};
\node [align=center,right] at (0.5,0.15) {$c$};
\node [align=center,right] at (0.5,-0.15) {$d$};
\node [align=center,align=center] at (1.1,0) {$+$};
\draw (2.2,0) circle [radius=0.2];
\node [align=center,align=center] at (2.2,0) {$0$};
\draw (2.8,0) circle [radius=0.2];
\node [align=center,align=center] at (2.8,0) {$0$};
\draw (2.4,0)--(2.6,0);
\draw (1.7,0.15)--(2.04,0.1);
\draw (1.7,-0.15)--(2.04,-0.1);
\draw (2.96,0.1)--(3.3,0.15);
\draw (2.96,-0.1)--(3.3,-0.15);
\node [align=center,left] at (1.7,0.15) {$a$};
\node [align=center,left] at (1.7,-0.15) {$b$};
\node [align=center,right] at (3.3,0.15) {$c$};
\node [align=center,right] at (3.3,-0.15) {$d$};
\node [align=center,align=center] at (3.9,0) {$+$};
\draw (2.2+2.8,0) circle [radius=0.2];
\node [align=center,align=center] at (2.2+2.8,0) {$0$};
\draw (2.8+2.8,0) circle [radius=0.2];
\node [align=center,align=center] at (2.8+2.8,0) {$0$};
\draw (2.4+2.8,0)--(2.6+2.8,0);
\draw (1.7+2.8,0.15)--(2.04+2.8,0.1);
\draw (1.7+2.8,-0.15)--(2.04+2.8,-0.1);
\draw (2.96+2.8,0.1)--(3.3+2.8,0.15);
\draw (2.96+2.8,-0.1)--(3.3+2.8,-0.15);
\node [align=center,left] at (1.7+2.8,0.15) {$a$};
\node [align=center,left] at (1.7+2.8,-0.15) {$c$};
\node [align=center,right] at (3.3+2.8,0.15) {$b$};
\node [align=center,right] at (3.3+2.8,-0.15) {$d$};
\node [align=center,align=center] at (3.9+2.8,0) {$+$};
\draw (2.2+2.8+2.8,0) circle [radius=0.2];
\node [align=center,align=center] at (2.2+2.8+2.8,0) {$0$};
\draw (2.8+2.8+2.8,0) circle [radius=0.2];
\node [align=center,align=center] at (2.8+2.8+2.8,0) {$0$};
\draw (2.4+2.8+2.8,0)--(2.6+2.8*2,0);
\draw (1.7+2.8+2.8,0.15)--(2.04+2.8*2,0.1);
\draw (1.7+2.8+2.8,-0.15)--(2.04+2.8*2,-0.1);
\draw (2.96+2.8+2.8,0.1)--(3.3+2.8*2,0.15);
\draw (2.96+2.8+2.8,-0.1)--(3.3+2.8+2.8,-0.15);
\node [align=center,left] at (1.7+2.8+2.8,0.15) {$a$};
\node [align=center,left] at (1.7+2.8+2.8,-0.15) {$d$};
\node [align=center,right] at (3.3+2.8+2.8,0.15) {$b$};
\node [align=center,right] at (3.3+2.8+2.8,-0.15) {$c$};
\node [align=center,right] at (9.3,-0.2) {$.$};
\end{tikzpicture}
\end{equation*}
Now we glue the edges named by $a$ and $b$ together,
and then forget the names and multiply by $(-1)^1$.
In this way we obtain:
\begin{equation*}
\begin{tikzpicture}
\draw [dotted,thick](1.6-1.6-0.2,0) circle [radius=0.2];
\draw [dotted,thick](-0.16-0.2,0.1) .. controls (-0.5-0.2,0.2) and (-0.5-0.2,-0.2) ..  (-0.16-0.2,-0.1);
\draw [dotted,thick](1.76-1.6-0.2,0.1)--(2.1-1.6-0.2,0.15);
\draw [dotted,thick](1.76-1.6-0.2,-0.1)--(2.1-1.6-0.2,-0.15);
\node [align=center,align=center] at (1.6-1.6-0.2,0) {$0$};
\node [align=center,align=center] at (0.8,0) {$=-$};
\draw (1.6+0.1,0) circle [radius=0.2];
\draw (1.54,0.1) .. controls (1.2,0.2) and (1.2,-0.2) ..  (1.54,-0.1);
\draw (1.76+0.1,0.1)--(2.1+0.1,0.15);
\draw (1.76+0.1,-0.1)--(2.1+0.1,-0.15);
\node [align=center,align=center] at (1.6+0.1,0) {$0$};
\node [align=center,align=center] at (2.7,0) {$-$};
\draw (1.7+2,0) circle [radius=0.2];
\draw (2.3+2,0) circle [radius=0.2];
\draw (3.54,0.1) .. controls (3.2,0.2) and (3.2,-0.2) ..  (3.54,-0.1);
\draw (2.46+2,0.1)--(2.8+2,0.15);
\draw (2.46+2,-0.1)--(2.8+2,-0.15);
\draw (1.9+2,0)--(2.1+2,0);
\node [align=center,align=center] at (1.7+2,0) {$0$};
\node [align=center,align=center] at (2.3+2,0) {$0$};
\node [align=center,align=center] at (5.3,0) {$-2$};
\draw (6.2,0) circle [radius=0.2];
\draw (5.7,0)--(6,0);
\draw (6.38,0.07)--(6.62,0.07);
\draw (6.38,-0.07)--(6.62,-0.07);
\draw (7,-0)--(7.3,0);
\draw (6.8,0) circle [radius=0.2];
\node [align=center,align=center] at (6.2,0) {$0$};
\node [align=center,align=center] at (6.8,0) {$0$};
\node [align=center,align=center] at (7.6,-0.15) {$.$};
\end{tikzpicture}
\end{equation*}
\end{Example}

Notice that in the above construction we have given two definitions of a dotted stable vertex.
The first way is to define it to be $n!\cdot \wcF_{g,n}$ directly
(where $g$ and $n$ are the genus and valence of the vertex respectively),
and the second way is to regard it as a dotted stable graph and applying the
procedure of naming and forgetting.
These two definitions actually match with each other since:
\begin{Lemma}
[\cite{wz2}]
Denote by $\Gamma\in \cG_{g,n}^c$ a stable graph (in the usual sense) without names,
and $S_\Gamma$ the set of stable graph $\Gamma'$ with names on external edges,
such that $\Gamma$ can be obtained from $\Gamma'$ by forgetting all the names.
Then we have:
\begin{equation*}
\frac{n!}{|\Aut(\Gamma)|}=
\sum_{\Gamma'\in S_\Gamma}\frac{1}{|\Aut(\Gamma')|}.
\end{equation*}
\end{Lemma}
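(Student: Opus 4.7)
The plan is to apply an orbit-stabilizer argument to the natural action of $\Aut(\Gamma)$ on the set of labelings of the external edges.

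First I would set up the bookkeeping. Fix $n$ distinct names $a_1,\ldots,a_n$, and let $L$ denote the set of bijections $\phi$ from the set $E^{\ext}(\Gamma)$ of external edges of $\Gamma$ to $\{a_1,\ldots,a_n\}$. Then $|L|=n!$. Each $\phi\in L$ produces a named graph $(\Gamma,\phi)$ whose underlying unnamed graph is $\Gamma$, so by definition every element of $L$ represents some member of $S_\Gamma$, and conversely every $\Gamma'\in S_\Gamma$ comes from such a $\phi$ (since its underlying graph is $\Gamma$).

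Next I would define the $\Aut(\Gamma)$-action on $L$ by $\sigma\cdot\phi:=\phi\circ\sigma^{-1}|_{E^{\ext}(\Gamma)}$ (using that an automorphism of $\Gamma$ permutes the external edges). The key observation is that two labelings $\phi_1,\phi_2\in L$ give isomorphic named graphs if and only if they lie in the same $\Aut(\Gamma)$-orbit: a name-preserving isomorphism between $(\Gamma,\phi_1)$ and $(\Gamma,\phi_2)$ is precisely an element $\sigma\in\Aut(\Gamma)$ with $\phi_2\circ\sigma=\phi_1$. Thus the orbits of $L$ are in bijection with $S_\Gamma$. Moreover, the stabilizer of $\phi$ consists of those $\sigma\in\Aut(\Gamma)$ which fix every external edge of $\Gamma$ (because the $a_i$ are distinct names), which is exactly $\Aut(\Gamma',\phi) = \Aut(\Gamma')$ for the corresponding $\Gamma'\in S_\Gamma$ (as noted in the definition in the text, name-preserving automorphisms must fix each external edge).

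By the orbit-stabilizer theorem, the orbit of $\phi$ has size $|\Aut(\Gamma)|/|\Aut(\Gamma')|$, so summing over orbits yields
\begin{equation*}
n! \;=\; |L| \;=\; \sum_{\Gamma'\in S_\Gamma}\frac{|\Aut(\Gamma)|}{|\Aut(\Gamma')|}.
\end{equation*}
Dividing by $|\Aut(\Gamma)|$ gives the desired identity. There is no genuine obstacle here; the only point requiring care is verifying that the stabilizer of a labeling $\phi$ really coincides with the automorphism group of the corresponding named graph $\Gamma'$, i.e.\ that name-preserving automorphisms are exactly automorphisms of $\Gamma$ fixing each external edge pointwise, which follows directly from the distinctness of the names.
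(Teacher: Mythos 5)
Your argument is correct: the orbit--stabilizer computation for the action of $\Aut(\Gamma)$ on the $n!$ labelings of the external edges, together with the identification of the stabilizer of a labeling with the automorphism group of the corresponding named graph (which, as the paper notes, consists exactly of the automorphisms fixing every external edge), gives precisely the stated identity. The paper itself states this lemma without proof, citing \cite{wz2}, so there is nothing to compare against here; your proof is the natural one and fills that gap correctly. The only point worth making explicit is that an automorphism of $\Gamma$ does permute the set $E^{\ext}(\Gamma)$ (this follows from condition 3 in the definition of automorphism plus bijectivity of $\phi_H$), which you implicitly use when defining the action.
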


\subsection{The duality map and duality theorem for stable graphs}
\label{sec-pre-duality}

Now in this subsection we recall the duality theorem for stable graphs.
See \cite{wz2} for details.

Let $\Gamma^\vee$ be a dotted stable graph of genus $g$ with $n$ external edges.
By definition it represents a linear combination of stable graphs
in the usual sense,
and one easily see that
the graphs appearing in this linear combination are all of genus $g$ and with $n$ external edges.
I.e.,
$\Gamma^\vee$ is an element in the vector space $\cV_{g,n}$
(see \eqref{eq-vectorspace-cV}).
Moreover,
if $\Gamma^\vee$ is connected,
then it is an element in $\cV_{g,n}^c$.

\begin{Definition}
[\cite{wz2}]
Given $(g,n)$ with $2g-2+n>0$,
define the duality map
\be
\phi_{g,n}:\cV_{g,n}\to\cV_{g,n},
\ee
to be the linear map which maps a graph in the usual sense to the dotted graph
obtained by simply changing this graph into a dotted stable graph of the same shape.
In particular,
when restricted to $\cV_{g,n}^c$,
it gives a linear map
\be
\phi_{g,n}:\cV_{g,n}^c\to\cV_{g,n}^c.
\ee
\end{Definition}

\begin{Example}
For examples, under the map $\phi_{2,0}$ we have:
\begin{equation*}
\begin{split}
&\begin{tikzpicture}
\draw (0,0) circle [radius=0.2];
\node [align=center,align=center] at (0,0) {$2$};
\node [align=center,align=center] at (0.6,0) {$\mapsto$};
\draw [dotted,thick](1.2,0) circle [radius=0.2];
\node [align=center,align=center] at (1.2,0) {$2$};
\node [align=center,align=center] at (1.65,-0.15) {$,$};
\end{tikzpicture}\\
&\begin{tikzpicture}
\draw (0,0) circle [radius=0.2];
\node [align=center,align=center] at (0,0) {$1$};
\draw (0.6,0) circle [radius=0.2];
\node [align=center,align=center] at (0.6,0) {$1$};
\draw (0.2,0)--(0.4,0);
\node [align=center,align=center] at (1.2,0) {$\mapsto$};
\draw [dotted,thick](2.2-0.3,0) circle [radius=0.2];
\node [align=center,align=center] at (2.2-0.3,0) {$1$};
\draw [dotted,thick](2.8-0.3,0) circle [radius=0.2];
\node [align=center,align=center] at (2.8-0.3,0) {$1$};
\draw [dotted,thick](2.4-0.3,0)--(2.6-0.3,0);
\node [align=center,align=center] at (3,-0.15) {$,$};
\end{tikzpicture}\\
&\begin{tikzpicture}
\draw (0,0) circle [radius=0.2];
\node [align=center,align=center] at (0,0) {$1$};
\draw (0.6,0) circle [radius=0.2];
\node [align=center,align=center] at (0.6,0) {$0$};
\draw (0.2,0)--(0.4,0);
\draw (0.76,0.1) .. controls (1.1,0.2) and (1.1,-0.2) ..  (0.76,-0.1);
\node [align=center,align=center] at (1.5,0) {$\mapsto$};
\draw [dotted,thick](0+2.2,0) circle [radius=0.2];
\node [align=center,align=center] at (0+2.2,0) {$1$};
\draw [dotted,thick](0.6+2.2,0) circle [radius=0.2];
\node [align=center,align=center] at (0.6+2.2,0) {$0$};
\draw [dotted,thick](0.2+2.2,0)--(0.4+2.2,0);
\draw [dotted,thick](0.76+2.2,0.1) .. controls (1.1+2.2,0.2) and (1.1+2.2,-0.2) ..  (0.76+2.2,-0.1);
\node [align=center,align=center] at (3.55,-0.15) {$,$};
\end{tikzpicture}
\end{split}
\end{equation*}
where the right-hand sides are understood as linear combinations of graphs in the usual sense,
i.e., as elements in $\cV_{2,0}^c$.
\end{Example}

One of the main results in \cite{wz2} is the following duality theorem:
\begin{Theorem}
[\cite{wz2}]
For every $(g,n)$ with $2g-2+n>0$,
the duality map $\phi_{g,n}:\cV_{g,n}\to\cV_{g,n}$ is an involution,
i.e.,
$\phi_{g,n}^2 = \Id$.
\end{Theorem}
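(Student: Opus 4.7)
The plan is to verify $\phi_{g,n}^2(\Gamma) = \Gamma$ for every stable graph $\Gamma \in \cG_{g,n}$ and then conclude by linearity. The disconnected case reduces to the connected one since the naming-and-gluing construction acts separately on each connected component, so I focus on $\Gamma \in \cG_{g,n}^c$.

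First I would unpack $\phi(\Gamma) = \Gamma^\vee$ using the four-step recipe in Section~\ref{sec-pre-dotted}: name all half-edges, cut the dotted internal edges, expand each named dotted vertex as a sum of named ordinary stable graphs, glue back, and multiply by $(-1)^{|E(\Gamma)|}$. This produces an explicit formula
\begin{equation*}
\Gamma^\vee \;=\; (-1)^{|E(\Gamma)|} \sum_{\Gamma'} a_{\Gamma,\Gamma'} \, \Gamma',
\end{equation*}
where $\Gamma'$ ranges over the \emph{refinements} of $\Gamma$: ordinary stable graphs obtained by substituting each vertex $v \in V(\Gamma)$ by some connected stable graph $\tau_v \in \cG_{g_v,n_v}^c$ and gluing the $\tau_v$'s along the edges of $\Gamma$. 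The rational coefficients $a_{\Gamma,\Gamma'}$ can be pinned down using the identity $\tfrac{n!}{|\Aut(\Gamma)|} = \sum_{\Gamma' \in S_\Gamma} \tfrac{1}{|\Aut(\Gamma')|}$ stated at the end of Section~\ref{sec-pre-dotted}.

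Applying $\phi$ to this sum and re-expanding each $\Gamma'^\vee$ by the same recipe, I would use that a refinement of a refinement of $\Gamma$ is again a refinement of $\Gamma$. Grouping by the final graph $\Gamma''$, the coefficient of $\Gamma''$ in $\phi_{g,n}^2(\Gamma)$ becomes a signed sum
\begin{equation*}
\sum_{\Gamma \preceq \Gamma' \preceq \Gamma''} (-1)^{|E(\Gamma)| + |E(\Gamma')|} \, a_{\Gamma,\Gamma'} \, a_{\Gamma',\Gamma''}
\end{equation*}
indexed by intermediate refinements, and the task reduces to showing this sum equals $\delta_{\Gamma,\Gamma''}$.

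The decisive step is a sign-reversing involution on the intermediate refinements when $\Gamma'' \neq \Gamma$: fix any edge $e$ of $\Gamma''$ that does not already come from $\Gamma$, and pair each $\Gamma'$ with the one obtained by toggling whether $e$ occurs already inside $\Gamma'$ or is produced only in the step $\Gamma' \to \Gamma''$. The two $\Gamma'$ in each pair differ by exactly one edge, so their signs cancel, leaving only the unpaired contribution $\Gamma' = \Gamma = \Gamma''$ with total coefficient $+1$. The main obstacle I anticipate is checking that the products $a_{\Gamma,\Gamma'} \, a_{\Gamma',\Gamma''}$ actually coincide across paired intermediates, which requires a careful reconciliation of the automorphism groups of the pieces $\tau_v$ with those of the glued graph; the enumeration identity recalled above should be the main tool making this bookkeeping work out. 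From a higher vantage point, this argument is essentially a Möbius-style inversion for the refinement partial order on stable graphs, which is the viewpoint that the remainder of the paper systematically develops.
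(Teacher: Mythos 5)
The paper does not actually prove this theorem --- it is imported from \cite{wz2} --- so there is no in-paper argument to compare against line by line. Your outline is nevertheless recognizably the ``right'' mechanism: once one knows that
$\phi_{g,n}(\Gamma)=(-1)^{|E(\Gamma)|}\sum_{\Gamma'\leq\Gamma}\frac{|\Aut(\Gamma)|}{|\Aut(\Gamma')|}|C(\Gamma',\Gamma)|\,\Gamma'$
(this is exactly the content of \eqref{eq-pf-def-hatg} and \eqref{eq-pf-claim}, which the paper derives \emph{from} the involution property rather than toward it), composing two such expansions and collapsing the double sum reduces to $\sum_{E\subseteq G}(-1)^{|E|}=0$ for $G\neq\emptyset$, which is the computation in the proof of Lemma \ref{lem-graph-1}. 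So the skeleton is sound.

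There are, however, two genuine gaps, and they are not peripheral --- together they constitute essentially all of the work. First, you never determine the coefficients $a_{\Gamma,\Gamma'}$. Showing that the naming--expanding--gluing--forgetting recipe of \S\ref{sec-pre-dotted} produces precisely $\frac{|\Aut(\Gamma)|}{|\Aut(\Gamma')|}\,|C(\Gamma',\Gamma)|$ is an orbit-counting argument: one must compare automorphisms of the named local pieces, automorphisms of the glued named graph, and the number of distinct gluings yielding isomorphic results. The lemma $\frac{n!}{|\Aut(\Gamma)|}=\sum_{\Gamma'\in S_\Gamma}\frac{1}{|\Aut(\Gamma')|}$ handles a single vertex; the general statement for a graph with several vertices and nontrivial global symmetries permuting the pieces does not follow from it by mere citation. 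Second, your sign-reversing involution is defined on the wrong index set. The intermediate sum runs over \emph{isomorphism classes} $\Gamma'$, and ``toggle whether the edge $e$ of $\Gamma''$ already occurs in $\Gamma'$'' is not an operation on isomorphism classes; moreover the paired classes have different automorphism groups and different multiplicities $|C(\cdot,\cdot)|$, so their coefficients do \emph{not} literally coincide, contrary to what the cancellation step needs. The repair is to re-index the double sum by nested pairs of subsets $E\subseteq G\subseteq E(\Gamma'')$ (this is what the factors $|C(\Gamma'',\Gamma')|\,|C(\Gamma',\Gamma)|$ accomplish), after which the $1/|\Aut|$ factors telescope to $\frac{|\Aut(\Gamma)|}{|\Aut(\Gamma'')|}$, the toggle becomes the honest involution $E\mapsto E\,\triangle\,\{e\}$ for a fixed $e\in G$, and the cancellation is immediate. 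As written, the decisive step of your argument would not go through without this re-indexing.
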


In particular,
denote by $\Ver_{g,n}\in \cV_{g,n}^c$ the stable vertex (in the usual sense)
of genus $g$ with $n$ external edges,
then by definition we know that
\be
\phi_{g,n} (\Ver_{g,n}) = n!\cdot \wcF_{g,n}
=n! \cdot \sum_{\Gamma\in\cG_{g,n}^c} \frac{1}{|\Aut(\Gamma)|} \Gamma
\ee
in $\cV_{g,n}^c$,
thus $\phi_{g,n}^2 (\Ver_{g,n}) = \Ver_{g,n}$ gives the following:
\begin{Theorem}
[\cite{wz2}]
For every $(g,n)$ with $2g-2+n>0$,
we have:
\be
\label{thm-pre-duality}
\sum_{\Gamma^\vee\in \cG_{g,n}^{\vee,c}}
\frac{1}{|\Aut(\Gamma^\vee)|} \Gamma^\vee =
\frac{1}{n!}\cdot \Ver_{g,n},
\ee
where $\cG_{g,n}^{\vee,c}$ is the set of all connected dotted stable graphs
of genus $g$ with $n$ external edges.
\end{Theorem}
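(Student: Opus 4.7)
The plan is to deduce the identity as a direct corollary of the duality theorem $\phi_{g,n}^2 = \Id$ stated just above, applied to the single-vertex element $\Ver_{g,n} \in \cV_{g,n}^c$. The paragraph immediately preceding the statement already flags this route; my task is to spell out the two-step computation and to verify that the bookkeeping of automorphism groups and connected versus all stable graphs really works out.

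First, I would compute $\phi_{g,n}(\Ver_{g,n})$. Since $\Ver_{g,n}$ is a single stable vertex of genus $g$ and valence $n$, the duality map sends it to the dotted vertex of the same genus and valence. By the definition recalled in Section \ref{sec-pre-dotted}, this dotted vertex equals $n!\cdot \wcF_{g,n} = n!\sum_{\Gamma\in\cG_{g,n}^c}\frac{1}{|\Aut(\Gamma)|}\Gamma$ in $\cV_{g,n}^c$, giving
\[
\phi_{g,n}(\Ver_{g,n}) \;=\; n!\sum_{\Gamma\in\cG_{g,n}^c}\frac{1}{|\Aut(\Gamma)|}\,\Gamma.
\]

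Next, I would apply $\phi_{g,n}$ once more, using linearity together with the fact that $\phi_{g,n}$ sends an ordinary stable graph $\Gamma$ to the dotted stable graph $\Gamma^\vee$ of the same combinatorial shape. The shape-preserving assignment $\Gamma\mapsto \Gamma^\vee$ sets up a bijection $\cG_{g,n}^c \leftrightarrow \cG_{g,n}^{\vee,c}$ under which the automorphism groups are canonically identified, so $|\Aut(\Gamma)|=|\Aut(\Gamma^\vee)|$. Hence
\[
\phi_{g,n}^2(\Ver_{g,n}) \;=\; n!\sum_{\Gamma^\vee\in\cG_{g,n}^{\vee,c}}\frac{1}{|\Aut(\Gamma^\vee)|}\,\Gamma^\vee.
\]
The involution property $\phi_{g,n}^2 = \Id$ forces the right-hand side to equal $\Ver_{g,n}$, and dividing by $n!$ yields the asserted identity.

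There is no substantive obstacle here beyond unpacking the definitions: the content has already been placed in the duality theorem, and the present statement is the special case obtained by evaluating the involution on the element $\Ver_{g,n}$. The one point worth noting carefully is that the sum on the left is taken over \emph{connected} dotted graphs, which is consistent because $\Ver_{g,n}\in\cV_{g,n}^c$ and $\phi_{g,n}$ restricts to an involution of $\cV_{g,n}^c$, so every dotted graph appearing on the right of the first computation, and hence every ordinary graph appearing after the second application of $\phi_{g,n}$, lies in $\cG_{g,n}^{\vee,c}$.
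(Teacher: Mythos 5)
Your proposal is correct and is essentially the paper's own derivation: the theorem is obtained by evaluating the involution $\phi_{g,n}^2=\Id$ on $\Ver_{g,n}$, using $\phi_{g,n}(\Ver_{g,n})=n!\cdot\wcF_{g,n}$ and the shape-preserving identification $\Gamma\mapsto\Gamma^\vee$ with $|\Aut(\Gamma)|=|\Aut(\Gamma^\vee)|$. Your explicit check that everything stays inside $\cV_{g,n}^c$ is a welcome (if minor) addition to the paper's terser presentation.
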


\section{Generalized M\"obius Inversion Formula for Stable Graphs}
\label{sec-inversion-Mob}

In this section,
we introduce a partial ordering on the set $\cG_{g,n}^c$ of stable graphs
described by edge-contractions,
and construct a pair of functions $(\tilde\zeta, \tilde\mu)$ on $\cG_{g,n}^c$
such that they are inverse to each other in the incidence algebra.
These two functions are analogues of the zeta function and the M\"obius function in combinatorics respectively.
Moreover,
we derive an inversion formula using $\tilde\zeta$ and $\tilde\mu$,
which is analogous to the M\"obius inversion formula.

\subsection{Partially-ordered set, incidence algebra, and M\"obius inversion formula}
\label{sec-pre-mobius}

In this subsection we recall some basics of the incidence algebra and M\"obius inversion formula
for a locally finite partially-ordered set,
see Rota \cite{ro}.
We will follow the notations in \cite{drs}.

A partially-ordered set $P$ is called locally finite,
if for arbitrary two elements $x,y\in P$,
the segment $[x,y]:=\{z\in P| x\leq z\leq y\}$ is always finite.
Now fix a field $\bK$,
and let $P$ be a locally finite partially-ordered set $P$.
The incidence algebra $I(P,\bK)$ of $P$ over $\bK$ consists of all $\bK$-valued functions $f(x,y)$
($x,y\in P$) such that $f(x,y)=0$ unless $x\leq y$.
Given two functions $f,g\in I(P,\bK)$,
the product $h=f*g$ of them are defined by:
\be
\label{eq-def-incidence-product}
h(x,y):= \sum_{z\in [x,y]} f(x,z) g(z,y).
\ee
It is clear that the Kronecker delta
\be
\delta(x,y) = \begin{cases}
1, &\text{ if $x=y$;}\\
0, &\text{ if $x\not=y$.}
\end{cases}
\ee
is the identity element of the incidence algebra $I(P,\bK)$.

The zeta function $\zeta(x,y)$ of $P$ is defined by:
\be
\label{eq-def-zeta}
\zeta (x,y):= \begin{cases}
1, & \text{if $x\leq y$};\\
0, & \text{otherwise}.
\end{cases}
\ee
The zeta function is known to be invertible in the incidence algebra,
and the inverse is the so-called M\"obius function $\mu(x,y)$.
The M\"obius function can be constructed inductively as follows.
First,
define $\mu(x,x):=1$ for every $x\in P$.
Now suppose that $\mu(x,z)$ has been defined for every $z\in [x,y)$,
i,e, for every $z$ with $x\leq z <y$,
then one inductively defines:
\be
\mu(x,y) := -\sum_{x\leq z< y} \mu(x,z).
\ee
This summation is well-defined since $P$ is locally finite.
Then one can check that
\be
\zeta * \mu (x,y) =
\sum_{z\in [x,y]} \zeta(x,z)\mu(z,y) = \delta(x,y),
\ee
for every $x,y\in P$.
Moreover,
the following M\"obius inversion formula is known
(see Rota \cite[\S 3, Proposition 2]{ro}):
\begin{Theorem}
[\cite{ro}]
\label{thm-Mob-inv}
Let $P$ be a locally finite partially-ordered set,
and $f$ be a $\bK$-valued function on $P$.
And assume that there is an element $p\in P$ such that $f(x)=0$ unless $x\geq p$.
Define a $\bK$-valued function $g$ on $P$ by:
\be
\label{eq-def-g}
g(x):= \sum_{y\leq x} f(y)= \sum_{y\in P} f(y) \zeta(y,z) ,
\ee
then:
\be
f(x) = \sum_{y\leq x} g(y) \mu(y,x).
\ee
\end{Theorem}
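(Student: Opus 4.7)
The plan is to mimic the standard textbook proof of M\"obius inversion, namely substitute the definition of $g$ into the proposed formula, swap the order of summation, and use the defining identity $\zeta * \mu = \delta$.

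First I would start from the right-hand side and plug in \eqref{eq-def-g}:
\begin{equation*}
\sum_{y\leq x} g(y)\mu(y,x) = \sum_{y\leq x} \Bigl(\sum_{z\leq y} f(z)\Bigr)\mu(y,x).
\end{equation*}
Before rearranging, I would check that this double sum has only finitely many nonzero terms: because $f(z)=0$ unless $z\geq p$, the inner sum is effectively over $z\in [p,y]$, and the outer sum is over $y\in [p,x]$ (the contribution from $y\not\geq p$ vanishes once we collect by $z$); both segments are finite by local finiteness of $P$. This is the only place the support hypothesis on $f$ is used, and it is exactly what is needed to legitimately swap the sums.

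Next I would swap the order of summation to collect the coefficient of each $f(z)$:
\begin{equation*}
\sum_{y\leq x}\sum_{z\leq y} f(z)\mu(y,x) = \sum_{z\leq x} f(z) \sum_{z\leq y\leq x} \mu(y,x) = \sum_{z\leq x} f(z)\sum_{y\in[z,x]}\zeta(z,y)\mu(y,x),
\end{equation*}
where I have rewritten the condition $z\leq y\leq x$ using $\zeta(z,y)$ in preparation for recognizing a convolution.

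Finally I would apply the identity $\zeta*\mu=\delta$ in $I(P,\bK)$, which is proved just before the theorem statement: the inner sum equals $\delta(z,x)$, so only the term $z=x$ survives and we obtain $f(x)$. The main (and essentially only) obstacle is the justification of the interchange of sums, which is why the support hypothesis $f(x)=0$ for $x\not\geq p$ is needed; everything else is formal manipulation inside the incidence algebra.
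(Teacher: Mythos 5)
Your proof is correct and is essentially the standard argument: the paper itself only cites Rota for this classical statement, but its own proof of the generalized analogue (Theorem \ref{thm-generalized-Mobius}) is exactly the same computation --- substitute the definition of $g$, exchange the order of summation, and apply $\zeta*\mu=\delta$. Your additional care in justifying the interchange of sums via the support hypothesis $f(x)=0$ for $x\not\geq p$ (so that both sums run over finite segments $[p,x]$) is a welcome detail that the paper only alludes to in the remark following the theorem.
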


\begin{Remark}
The existence of the above element $p\in P$ ensures that
\eqref{eq-def-g} is always a finite summation,
thus the function $g(x)$ is well-defined.
\end{Remark}

In what follows,
let us consider the sets $\cG_{g,n}^c$ of connected stable graphs.
We want to find an analogue of the above M\"obius inversion formula
such that it gives an interpretation of the duality theorem recalled in \S \ref{sec-pre-duality}.
We will see that in order to do this,
we need to modify the zeta function in a suitable manner such that
it encodes the orders of the automorphism groups of stable graphs.

\subsection{A partial ordering on the set of stable graphs}
\label{sec-cG-order}

In this subsection we introduce a partial ordering on the set $\cG_{g,n}^c$
of all connected stable graphs of genus $g$ with $n$ external edges.

Fix a pair of non-negative integers $(g,n)$ with $2g-2+n>0$,
and let us equip the set $\cG_{g,n}^c$ a partial ordering in the following way.
First we need to introduce the edge-contraction procedure for stable graphs.
An edge-contraction means one of the following procedures:
\begin{itemize}
\item[1)]
Remove a loop attached to a vertex $v$ of genus $g_v$,
and then replace this vertex by a vertex of genus $g_v+1$.
For examples,
\begin{equation*}
\begin{split}
&\begin{tikzpicture}
\draw (0,0) circle [radius=0.2];
\node [align=center,align=center] at (0,0) {$1$};
\draw (0.6,0) circle [radius=0.2];
\node [align=center,align=center] at (0.6,0) {$0$};
\draw (0.2,0)--(0.4,0);
\draw (0.76,0.1) .. controls (1.1,0.2) and (1.1,-0.2) ..  (0.76,-0.1);
\node [align=center,align=center] at (1.65,0) {$\mapsto$};
\draw (0+2.5,0) circle [radius=0.2];
\node [align=center,align=center] at (0+2.5,0) {$1$};
\draw (0.6+2.5,0) circle [radius=0.2];
\node [align=center,align=center] at (0.6+2.5,0) {$1$};
\draw (0.2+2.5,0)--(0.4+2.5,0);
\node [align=center,align=center] at (3.6,-0.1) {$;$};
\end{tikzpicture}\\
&\begin{tikzpicture}
\draw (1+1.4+0.2,0) circle [radius=0.2];
\draw (0.84+1.4+0.2,0.1) .. controls (0.5+1.4+0.2,0.2) and (0.5+1.4+0.2,-0.2) ..  (0.84+1.4+0.2,-0.1);
\node [align=center,align=center] at (1+1.4+0.2,0) {$1$};
\node [align=center,align=center] at (3.4,0) {$\mapsto$};
\draw (4.2,0) circle [radius=0.2];
\node [align=center,align=center] at (4.2,0) {$2$};
\node [align=center,align=center] at (4.7,-0.1) {$;$};
\end{tikzpicture}\\
&\begin{tikzpicture}
\draw (1+9,0) circle [radius=0.2];
\draw (0.4+9,0) circle [radius=0.2];
\draw (0.6+9,0)--(0.8+9,0);
\draw (1.16+9,0.1) .. controls (1.5+9,0.2) and (1.5+9,-0.2) ..  (1.16+9,-0.1);
\draw (0.24+9,0.1) .. controls (-0.1+9,0.2) and (-0.1+9,-0.2) ..  (0.24+9,-0.1);
\node [align=center,align=center] at (1+9,0) {$0$};
\node [align=center,align=center] at (0.4+9,0) {$0$};
\node [align=center,align=center] at (11,0) {$\mapsto$};
\draw (1+9+2.6,0) circle [radius=0.2];
\draw (0.4+9+2.6,0) circle [radius=0.2];
\draw (0.6+9+2.6,0)--(0.8+9+2.6,0);
\draw (0.24+9+2.6,0.1) .. controls (-0.1+9+2.6,0.2) and (-0.1+9+2.6,-0.2) ..  (0.24+9+2.6,-0.1);
\node [align=center,align=center] at (1+9+2.6,0) {$1$};
\node [align=center,align=center] at (0.4+9+2.6,0) {$0$};
\node [align=center,align=center] at (13.1,-0.12) {$.$};
\end{tikzpicture}
\end{split}
\end{equation*}
\item[2)]
Remove an internal edge $e$ which is not a loop,
and let the two (distinct) vertices $v_1,v_2$ (of genus $g_1,g_2$ respectively) joint by $e$
merges into a new vertex of genus $g_1+g_2$.
For examples,
\begin{equation*}
\begin{split}
&\begin{tikzpicture}
\draw (0,0) circle [radius=0.2];
\node [align=center,align=center] at (0,0) {$1$};
\draw (0.6,0) circle [radius=0.2];
\node [align=center,align=center] at (0.6,0) {$0$};
\draw (0.2,0)--(0.4,0);
\draw (0.76,0.1) .. controls (1.1,0.2) and (1.1,-0.2) ..  (0.76,-0.1);
\node [align=center,align=center] at (1.65,0) {$\mapsto$};
\draw (0+2.5,0) circle [radius=0.2];
\node [align=center,align=center] at (0+2.5,0) {$1$};
\draw (0.76+1.9,0.1) .. controls (1.1+1.9,0.2) and (1.1+1.9,-0.2) ..  (0.76+1.9,-0.1);
\node [align=center,align=center] at (3.2,-0.1) {$;$};
\end{tikzpicture}\\
&\begin{tikzpicture}
\draw (0,0) circle [radius=0.2];
\node [align=center,align=center] at (0,0) {$1$};
\draw (0.6,0) circle [radius=0.2];
\node [align=center,align=center] at (0.6,0) {$1$};
\draw (0.2,0)--(0.4,0);
\node [align=center,align=center] at (1.4,0) {$\mapsto$};
\draw (2.2,0) circle [radius=0.2];
\node [align=center,align=center] at (2.2,0) {$2$};
\node [align=center,align=center] at (2.7,-0.1) {$;$};
\end{tikzpicture}\\
&\begin{tikzpicture}
\draw (1+4.8,0) circle [radius=0.2];
\draw (0.84+4.8,0.1) .. controls (0.5+4.8,0.2) and (0.5+4.8,-0.2) ..  (0.84+4.8,-0.1);
\draw (1.16+4.8,0.1) .. controls (1.5+4.8,0.2) and (1.5+4.8,-0.2) ..  (1.16+4.8,-0.1);
\node [align=center,align=center] at (1+4.8,0) {$0$};
\node [align=center,align=center] at (4.8,0) {$\mapsto$};
\draw (1+10.2+0.2-8,0) circle [radius=0.2];
\draw (1.2+10.2+0.2-8,0)--(1.4+10.2+0.2-8,0);
\draw (1.16+10.2+0.2-8,0.1)--(1.44+10.2+0.2-8,0.1);
\draw (1.16+10.2+0.2-8,-0.1)--(1.44+10.2+0.2-8,-0.1);
\draw (1.6+10.2+0.2-8,0) circle [radius=0.2];
\node [align=center,align=center] at (1+10.2+0.2-8,0) {$0$};
\node [align=center,align=center] at (1.6+10.2+0.2-8,0) {$0$};
\node [align=center,align=center] at (12.4-5.9,-0.12) {$.$};
\end{tikzpicture}
\end{split}
\end{equation*}
\end{itemize}
It is clear that the edge-contraction procedures preserves
the genus and number of external edges of a stable graph.

\begin{Definition}
Define a partial ordering on the set $\cG_{g,n}^c$ as follows.
Given two graphs $\Gamma_1,\Gamma_2\in\cG_{g,n}^c$,
we require that $\Gamma_1\geq \Gamma_2$ if and only if
$\Gamma_1$ can be obtained from $\Gamma_2$ by contracting some internal edges successively.
\end{Definition}

Examples for small $(g,n)$ will be given in Appendix \ref{sec-app}.
The following is clear:
\begin{Lemma}
For each $(g,n)$ with $2g-2+n>0$,
we have $\Ver_{g,n} \geq \Gamma$ for every $\Gamma \in \cG_{g,n}^c$,
where $\Ver_{g,n} \in \cG_{g,n}^c$ is the stable vertex of genus $g$ with $n$ external edges
attached to it.
\end{Lemma}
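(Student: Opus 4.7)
The plan is to show that starting from any $\Gamma\in\cG_{g,n}^c$, a sequence of edge-contractions reaches $\Ver_{g,n}$. I will proceed by induction on the number $|E(\Gamma)|$ of internal edges. The base case $|E(\Gamma)|=0$ forces $h^1(\Gamma)=0$, and combined with connectedness this gives a single vertex; since $n$ is preserved and $\genus=\sum_v g_v = g$ in this case, the graph is exactly $\Ver_{g,n}$.

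For the inductive step, pick any internal edge $e$ of $\Gamma$ and contract it to get $\Gamma'$. I need to verify three things before applying the inductive hypothesis to $\Gamma'$:
(i) $\Gamma'$ is connected with $n$ external edges (obvious, since contraction does not change $E^{\ext}$ and only identifies vertices);
(ii) $\genus(\Gamma')=\genus(\Gamma)=g$, which follows from a direct check in the two cases (loop contraction: $h^1$ decreases by $1$ while $\sum g_v$ increases by $1$; non-loop contraction: both $h^1$ and $\sum g_v$ are unchanged);
(iii) $\Gamma'$ is stable, i.e.\ lies in $\cG_{g,n}^c$.

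The main point requiring attention is (iii), and it is a short calculation. If $e$ is a loop at $v$ with data $(g_v,\val_v)$, the new vertex has data $(g_v+1,\val_v-2)$, and
\[
2(g_v+1)-2+(\val_v-2) \;=\; 2g_v-2+\val_v \;>\; 0
\]
by stability of $\Gamma$. If $e$ joins distinct vertices $v_1,v_2$ with data $(g_i,\val_{v_i})$, the merged vertex has genus $g_1+g_2$ and valence $\val_{v_1}+\val_{v_2}-2$, so
\[
2(g_1+g_2)-2+(\val_{v_1}+\val_{v_2}-2) \;=\; (2g_1-2+\val_{v_1})+(2g_2-2+\val_{v_2})+2 \;>\; 0.
\]
Hence $\Gamma'\in\cG_{g,n}^c$ and $|E(\Gamma')|=|E(\Gamma)|-1$, so the inductive hypothesis applies and $\Gamma'\geq \Ver_{g,n}$ can be further contracted to $\Ver_{g,n}$. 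Composing with the single contraction $\Gamma\to\Gamma'$ yields $\Gamma\geq \Ver_{g,n}$.

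I expect no genuine obstacle: the only subtlety is that one must perform contractions within the category of stable graphs, and the stability inequalities are strictly additive under merging and strictly preserved under loop removal, so no intermediate graph falls outside $\cG_{g,n}^c$. The assertion $\Ver_{g,n}\geq\Gamma$ then follows immediately from the definition of the partial ordering.
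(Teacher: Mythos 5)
Your proof is correct, and the paper in fact offers no proof at all (the lemma is prefaced with ``The following is clear''), so your induction on $|E(\Gamma)|$ is exactly the routine verification the authors are suppressing: contract edges one at a time, checking that stability is preserved via the additivity of $2g_v-2+\val_v$ under merging and its invariance under loop removal. All three of your checks (connectedness, genus preservation, stability of intermediate graphs) are the right ones and are carried out correctly.
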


And the following is a straightforward consequence of the stability condition:
\begin{Lemma}
Given $(g,n)$ with $2g-2+n>0$,
let $\Gamma\in \cG_{g,n}^c$ be a connected stable graph consisting of
some trivalent vertices of genus zero and edges connecting them,
then $\Gamma$ is minimal in $\cG_{g,n}^c$.
I.e.,
for every graph $\Gamma'\in \cG_{g,n}^c$,
we have either $\Gamma' \geq \Gamma$ or they are not comparable.
Conversely,
every minimal element in $\cG_{g,n}^c$ is of this form.
\end{Lemma}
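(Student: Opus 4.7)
The plan is to characterize the moves inverse to edge contraction and then check which of them preserve the stability condition~\eqref{eq-stable-def}. By the definition of the partial ordering in Section~\ref{sec-cG-order}, $\Gamma \in \cG_{g,n}^c$ is minimal if and only if there is no $\Gamma' \in \cG_{g,n}^c$ distinct from $\Gamma$ from which $\Gamma$ is obtained by contracting one or more internal edges; equivalently, no local ``expansion'' at a vertex of $\Gamma$ produces another stable graph in $\cG_{g,n}^c$. So the first step is to reduce the lemma to the non-existence of such an expansion.

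Next I would enumerate the two reverse moves corresponding to the two kinds of edge contraction listed in Section~\ref{sec-cG-order}. A \emph{loop expansion} at a vertex $v$ of genus $g_v$ replaces $v$ by a vertex of genus $g_v-1$ carrying a new loop; this requires $g_v \geq 1$, and stability of the new vertex is automatic since $2(g_v-1)-2+(\val_v+2) = 2g_v-2+\val_v > 0$. A \emph{vertex split} at $v$ replaces $v$ by two vertices $v_1,v_2$ of genera $g_1,g_2$ with $g_1+g_2 = g_v$, joined by a new internal edge, with the half-edges at $v$ distributed into subsets of sizes $n_1,n_2$ satisfying $n_1+n_2 = \val_v$; stability of $v_i$ demands $2g_i + n_i \geq 2$ for both $i=1,2$. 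These are the only two expansion moves, since each reverses exactly one of the two edge-contraction procedures.

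Then I would do the elementary case analysis. Loop expansions are inapplicable at every vertex if and only if $g_v = 0$ for all $v\in V(\Gamma)$. Under this restriction, a vertex split at $v$ demands a partition $\val_v = n_1+n_2$ with $n_i\geq 2$, which exists if and only if $\val_v\geq 4$. Hence vertex splits fail everywhere if and only if $\val_v\leq 3$ for every $v$. Combined with~\eqref{eq-stable-def} in the case $g_v = 0$, which forces $\val_v\geq 3$, this yields $\val_v = 3$ at every vertex. Conversely, if every vertex of $\Gamma$ is trivalent of genus zero, then neither kind of expansion can be performed, so $\Gamma$ is minimal.

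The main technical point — and the step that most deserves care — is confirming that the two moves above really do exhaust all reversals of the edge-contraction procedures, and that the valence bound in the vertex split is sharp; the latter reduces to the elementary infeasibility of $n_1+n_2 = 3$ with $n_1,n_2\geq 2$. I do not anticipate any deeper obstacle, as the proof is essentially a careful accounting of the stability inequality \eqref{eq-stable-def} for the local neighborhood of each vertex.
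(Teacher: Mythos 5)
Your proposal is correct. The paper states this lemma without proof, calling it a straightforward consequence of the stability condition \eqref{eq-stable-def}, and your argument --- reducing minimality to the non-existence of a single loop expansion or vertex split, then checking that loop expansion needs $g_v\geq 1$ while a genus-zero vertex split needs $\val_v = n_1+n_2$ with both $n_i\geq 2$ (hence $\val_v\geq 4$) --- is exactly the intended justification, with the arithmetic done correctly.
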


Moreover,
from the definition of dotted stable graphs
one easily sees that the following property holds:
\begin{Lemma}
Let $\Gamma^\vee \in\cG_{g,n}^{\vee,c}$ be a connected dotted stable graph of genus $g$ with $n$ external edges,
and let $\Gamma\in \cG_{g,n}^c$ be the underlying graph in the usual sense.
I.e., let $\Gamma^\vee=\phi_{g,n}(\Gamma)$.
Then as an element in $\cV_{g,n}^c$,
$\Gamma^\vee$ is of the form:
\be
\label{eq-dotgr-form}
\Gamma^\vee = (-1)^{|E(\Gamma)|}\cdot \Gamma
+\sum_{\Gamma' < \Gamma} \text{coefficient}\cdot \Gamma'.
\ee
\end{Lemma}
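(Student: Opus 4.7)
My plan is to unwind the explicit naming-and-gluing definition of a dotted stable graph from \S\ref{sec-pre-dotted} and track which unnamed graphs appear. Fix a labelling of all half-edges of $\Gamma^\vee$ by pairwise distinct names; cutting each dotted internal edge produces a collection of dotted stable vertices $\{\Gamma_v^\vee\}_{v\in V(\Gamma)}$, each carrying its named external edges. By definition, every such dotted vertex of genus $g_v$ and valence $n_v$ equals $\sum_{\Gamma_v'} |\Aut(\Gamma_v')|^{-1}\,\Gamma_v'$, where the sum runs over stable graphs $\Gamma_v' \in \cG_{g_v,n_v}^c$ whose external edges carry the same prescribed names. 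Then $\Gamma^\vee$ is obtained by gluing these expansions back according to the internal-edge structure of $\Gamma^\vee$, forgetting names, and multiplying by $(-1)^{|E^\vee(\Gamma^\vee)|} = (-1)^{|E(\Gamma)|}$.

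Distributing the product of the local sums over choices $(\Gamma_v')_{v\in V(\Gamma)}$, I obtain one named composite graph per choice, whose name-forgotten version I denote by $\Gamma'$. The first claim is that $\Gamma'\leq\Gamma$ in the poset $\cG_{g,n}^c$: indeed, $\Gamma'$ arises from $\Gamma$ by replacing each vertex $v$ with the stable graph $\Gamma_v'$, and since every $\Gamma_v'$ has the same genus $g_v$ and valence $n_v$ as $v$, the edge-contraction rules together with the formula $\genus(\Gamma_v') = h^1(\Gamma_v') + \sum g_{v'}$ imply that successively contracting all internal edges of $\Gamma_v'$ collapses it to a single vertex of genus $g_v$. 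Contracting these ``new'' internal edges of $\Gamma'$ therefore recovers $\Gamma$, showing $\Gamma\geq\Gamma'$.

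The distinguished all-trivial choice $\Gamma_v' = \Ver_{g_v,n_v}$ yields the named version of $\Gamma$ itself with coefficient $\prod_v |\Aut(\Ver_{g_v,n_v})|^{-1}=1$ (since naming fixes all half-edges of a single vertex); after the overall sign this contributes $(-1)^{|E(\Gamma)|}\Gamma$, matching the leading term of \eqref{eq-dotgr-form}. Any other choice uses a non-trivial $\Gamma_v'$ at some vertex, and since such a $\Gamma_v'$ has at least one internal edge, the resulting $\Gamma'$ has strictly more internal edges than $\Gamma$; in particular $\Gamma'\neq\Gamma$ and $\Gamma'<\Gamma$ strictly, as claimed.

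The only delicate point is to confirm that no further contribution to the coefficient of $\Gamma$ can sneak in from some non-trivial choice $(\Gamma_v')_v$ after name-forgetting collapses several named graphs together. This follows cleanly from the edge-count argument above: any non-trivial local factor strictly increases $|E|$, so it cannot produce $\Gamma$ after forgetting names. I therefore expect the main obstacle to be notational---formalizing the gluing bookkeeping carefully, so that each choice of $(\Gamma_v')_v$ contributes to a single composite graph and the leading term emerges with coefficient exactly $1$---rather than a deeper conceptual difficulty.
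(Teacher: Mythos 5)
Your argument is correct, and it is precisely the natural unwinding of the naming--gluing definition: the paper itself offers no proof of this lemma (it is stated as evident from the construction), and your edge-count observation that any nontrivial local choice $\Gamma_v'$ strictly increases $|E|$ is exactly the right way to isolate the leading term and to verify $\Gamma'<\Gamma$ via contraction of the inserted internal edges. Nothing further is needed.
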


The partially-ordered set $\cG_{g,n}^c$ is locally finite,
since $\cG_{g,n}^c$ is a finite set due to the stability condition \eqref{eq-stable-def}.
Now we can consider the incidence algebra $I(\cG_{g,n}^c,\bK)$ of $\cG_{g,n}^c$
for some field $\bK$,
and the product of two $\bK$-valued functions on $\cG_{g,n}^c$
is defined by \eqref{eq-def-incidence-product}.

\begin{Remark}
The partial ordering discussed above is inspired by
the stratification of the moduli space $\Mbar_{g,n}$ recalled in \S \ref{sec-pre-absqft}.
In fact,
given two graphs $\Gamma,\Gamma'\in \cG_{g,n}^c$ with $2g-2+n>0$,
one easily sees that $\Gamma' \leq \Gamma$ if and only if
$\cM_{\Gamma'} \subset \Mbar_{\Gamma}$
where $\Mbar_{\Gamma}$ is the closure of $\cM_\Gamma$ in $\Mbar_{g,n}$.

\end{Remark}

\begin{Remark}
We have introduced a partial ordering on $\cG_{g,n}^c$ for a fixed pair $(g,n)$.
One can also consider the set
\ben
\cG^c := \bigsqcup_{2g-2+n>0} \cG_{g,n}^c
\een
of all connected stable graphs,
and extend the partial ordering on each $\cG_{g,n}^c$ to $\cG^c$ by additionally requiring that
$\Gamma_1>\Gamma_2$ whenever $\Gamma_1\in\cG_{g_1,n_1}^c$, $\Gamma_2\in\cG_{g_2,n_2}^c$
such that $g_1>g_2$, or $g_1=g_2$ and $n_1>n_2$.
The ordering of the pairs $(g,n)$ will be useful when one considers the realizations,
see eg. \S \ref{sec-orb-intro}.
\end{Remark}

\subsection{Generalized zeta function and generalized M\"obius function}
\label{sec-gen-zeta-mu}

In this subsection we modify the zeta function \eqref{eq-def-zeta}
such that it encodes the information of the orders of automorphism groups of stable graphs.
Then we study its inverse in the incidence algebra.

Fix a pair $(g,n)$ with $2g-2+n>0$,
and let $\cG_{g,n}^c$ be equipped with the partial ordering
introduced in last subsection.
We introduce an analogue of the zeta function \eqref{eq-def-zeta}
on $\cG_{g,n}^c$ as follows:
\be
\tilde \zeta (\Gamma',\Gamma)
:= \begin{cases}
\frac{|\Aut(\Gamma)|}{|\Aut(\Gamma')|}\cdot |C(\Gamma',\Gamma)|,
& \text{if $\Gamma'\leq \Gamma$};\\
0, & \text{otherwise},
\end{cases}
\ee
where:
\begin{equation*}
C(\Gamma',\Gamma):=\{ E\subset E(\Gamma')\big|
\text{$\Gamma$ is obtained from $\Gamma'$ by
contracting edges in $E$} \}.
\end{equation*}
Here the subset $E\subset E(\Gamma)$ can be
$\emptyset$ (if $\Gamma=\Gamma'$) or $E(\Gamma)$ (if $\Gamma=\Ver_{g,n}$).

\begin{Example}
For example, let:
\begin{equation*}
\begin{tikzpicture}[scale=0.95]
\node [align=center,align=center] at (-0.6,0.015) {$\Gamma'=$};
\draw (1,0) circle [radius=0.2];
\draw (0.4,0) circle [radius=0.2];
\draw (0.6,0)--(0.8,0);
\draw (1.16,0.1) .. controls (1.5,0.2) and (1.5,-0.2) ..  (1.16,-0.1);
\draw (0.24,0.1) .. controls (-0.1,0.2) and (-0.1,-0.2) ..  (0.24,-0.1);
\node [align=center,align=center] at (1,0) {$0$};
\node [align=center,align=center] at (0.4,0) {$0$};
\node [align=center,align=center] at (1.65,-0.15) {$,$};
\node [align=center,align=center] at (-0.6+5,0.015) {$\Gamma=$};
\draw (1+5,0) circle [radius=0.2];
\draw (0.4+5,0) circle [radius=0.2];
\draw (0.6+5,0)--(0.8+5,0);
\draw (0.24+5,0.1) .. controls (-0.1+5,0.2) and (-0.1+5,-0.2) ..  (0.24+5,-0.1);
\node [align=center,align=center] at (1+5,0) {$1$};
\node [align=center,align=center] at (0.4+5,0) {$0$};
\node [align=center,align=center] at (1.4+5,-0.15) {$,$};
\end{tikzpicture}
\end{equation*}
then $|\Aut(\Gamma')|=8$ and $|\Aut(\Gamma)|=2$.
Notice that $\Gamma$ can be obtained from $\Gamma'$ by contracting
either one of the two loops in $\Gamma'$,
thus $|C(\Gamma', \Gamma)|=2$,
and
\begin{equation*}
\tilde\zeta(\Gamma',\Gamma)=\frac{2}{8}\times 2 = \half.
\end{equation*}
\end{Example}

\begin{Example}
Let $\Gamma=\Gamma'$.
We have $C(\Gamma,\Gamma)=\{\emptyset\}$, and then:
\be
\tilde\zeta (\Gamma',\Gamma')
=\frac{|\Aut(\Gamma')|}{|\Aut(\Gamma')|}\cdot 1
=1.
\ee
\end{Example}

\begin{Example}
Let $\Gamma=\Ver_{g,n}$.
We have $C(\Gamma',\Ver_{g,n})=\{E(\Gamma')\}$, and then:
\be
\tilde\zeta (\Gamma',\Ver_{g,n})
=\frac{|\Aut(\Gamma)|}{|\Aut(\Gamma')|}\cdot 1
= \frac{n!}{|\Aut(\Gamma')|}.
\ee
\end{Example}

Similar to the case in \S \ref{sec-pre-mobius},
the function $\tilde\zeta (x,y)$ is also invertible in the incidence algebra of $\cG_{g,n}^c$:

\begin{Proposition}
There exists a function $\tilde\mu(x,y)$ on $\cG_{g,n}^c$
such that:
\be
\label{eq-inverse-delta}
\sum_{y\in \cG_{g,n}^c} \tilde\zeta(x,y) \tilde\mu(y,z) = \delta(x,z),
\qquad
\forall x,z \in \cG_{g,n}^c.
\ee
\end{Proposition}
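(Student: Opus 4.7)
The plan is to construct $\tilde\mu$ by a finite inductive procedure that mimics the classical construction of the M\"obius function, exploiting the fact that $\tilde\zeta$ behaves like an upper triangular matrix with $1$'s on the diagonal.

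The first step is to record the key structural feature of $\tilde\zeta$: for every $\Gamma\in\cG_{g,n}^c$ one has $\tilde\zeta(\Gamma,\Gamma)=1$ (as observed in the example with $C(\Gamma,\Gamma)=\{\emptyset\}$), and $\tilde\zeta(\Gamma',\Gamma)=0$ unless $\Gamma'\leq\Gamma$. Since the stability condition forces $\cG_{g,n}^c$ to be finite, the poset is in particular locally finite, and every interval $[\Gamma',\Gamma]$ contains only finitely many elements.

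Next, I would define $\tilde\mu$ by downward induction on the first variable. Fix $z\in\cG_{g,n}^c$. Set
\begin{equation*}
\tilde\mu(z,z):=1,
\end{equation*}
and for $x<z$, assuming $\tilde\mu(y,z)$ has been defined for all $y$ with $x<y\leq z$, set
\begin{equation*}
\tilde\mu(x,z):=-\sum_{x<y\leq z}\tilde\zeta(x,y)\,\tilde\mu(y,z).
\end{equation*}
For $x\not\leq z$, set $\tilde\mu(x,z):=0$. The induction terminates because $[x,z]$ is finite, and by construction the definition is tailored precisely so that
\begin{equation*}
\sum_{y\in[x,z]}\tilde\zeta(x,y)\,\tilde\mu(y,z)=\tilde\mu(x,z)+\sum_{x<y\leq z}\tilde\zeta(x,y)\,\tilde\mu(y,z)=0
\end{equation*}
whenever $x<z$, while the case $x=z$ gives $\tilde\zeta(x,x)\tilde\mu(x,x)=1=\delta(x,x)$.

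Finally I would verify that $\tilde\mu$ satisfies \eqref{eq-inverse-delta} for all pairs $(x,z)$, not just $x\leq z$. If $x\not\leq z$, then any $y$ contributing nontrivially to the sum must satisfy $x\leq y$ and $y\leq z$, forcing $x\leq z$, a contradiction; hence the sum vanishes and equals $\delta(x,z)=0$. This completes the construction. There is no substantial obstacle here: the argument is the standard ``upper triangular with unit diagonal is invertible'' observation for incidence algebras (cf.\ \cite{ro,drs}), applied to the modified function $\tilde\zeta$; the only point worth emphasizing is that modifying $\zeta$ by the ratio of automorphism orders and the multiplicity $|C(\Gamma',\Gamma)|$ does not spoil the diagonal condition $\tilde\zeta(\Gamma,\Gamma)=1$, which is what makes the inversion go through.
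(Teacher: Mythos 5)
Your proposal is correct and follows essentially the same inductive construction as the paper: define $\tilde\mu$ recursively using $\tilde\zeta(\Gamma,\Gamma)=1$ and the finiteness of the intervals, then verify the convolution identity by splitting off the diagonal term. The only addition is your explicit check of the case $x\not\leq z$, which the paper leaves implicit.
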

\begin{proof}
The inverse $\tilde\mu$ can be constructed inductively
using the same way of finding the M\"obius function $\mu$.
First,
let $\tilde \mu(x,z):=0$ unless $x\leq z$.
And for $x\leq z$,
we define $\tilde \mu(x,z)$ inductively on the number of elements in the interval $[x,z]$ as follows.
If $x=z$,
define $\tilde\mu(x,x):=1$;
and if $x<z$,
we inductively define:
\be
\label{eq-def-tildemu}
\tilde\mu(x,z) := -\sum_{y\in (x,z]} \tilde\zeta(x,y) \tilde\mu(y,z).
\ee
Then one can check that \eqref{eq-inverse-delta} holds:
\begin{equation*}
\sum_{y\in [x,z]} \tilde\zeta(x,y) \tilde\mu(y,z)
= \tilde\zeta(x,x) \tilde\mu(x,z) +
\sum_{y\in (x,z]} \tilde\zeta(x,y) \tilde\mu(y,z)
=0,
\end{equation*}
since $\tilde\zeta(x,x)=1$.
\end{proof}

We will call $\tilde\zeta(x,y)$ the generalized zeta function,
and call $\tilde\mu(x,y)$ the generalized M\"obius function.
Our main result in this subsection is the following:
\begin{Lemma}
\label{lem-graph-1}
Given a pair $(g,n)$ with $2g-2+n>0$
and $\Gamma\in \cG_{g,n}^c$,
we have:
\be
\tilde\mu(\Gamma,\Ver_{g,n})= \frac{(-1)^{|E(\Gamma)|}\cdot n!}{|\Aut(\Gamma)|},
\ee
where $\Ver_{g,n}$ is the stable vertex of genus g and valence $n$,
and $|E(\Gamma)|$ is the number of internal edges in $\Gamma$.
\end{Lemma}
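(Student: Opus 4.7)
The plan is to verify the proposed formula directly from the defining recursion of $\tilde\mu$, rather than computing $\tilde\mu(\Gamma,\Ver_{g,n})$ by induction along edge-counts from below. Since $\tilde\mu(\cdot,\Ver_{g,n})$ is uniquely determined by the identity
\[
\sum_{\Gamma\leq \Gamma'\leq \Ver_{g,n}}
 \tilde\zeta(\Gamma,\Gamma')\,\tilde\mu(\Gamma',\Ver_{g,n}) \;=\; \delta(\Gamma,\Ver_{g,n}),
\]
it suffices to show that the right-hand candidate satisfies this relation for every $\Gamma\in\cG_{g,n}^c$.

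First I would substitute the proposed expression $\tilde\mu(\Gamma',\Ver_{g,n}) = (-1)^{|E(\Gamma')|} n!/|\Aut(\Gamma')|$ together with the definition
$\tilde\zeta(\Gamma,\Gamma') = \frac{|\Aut(\Gamma')|}{|\Aut(\Gamma)|}|C(\Gamma,\Gamma')|$, and observe that the $|\Aut(\Gamma')|$ factors cancel. The identity to verify becomes
\[
\frac{n!}{|\Aut(\Gamma)|}\sum_{\Gamma\leq \Gamma'\leq \Ver_{g,n}} (-1)^{|E(\Gamma')|}\,|C(\Gamma,\Gamma')| \;=\; \delta(\Gamma,\Ver_{g,n}).
\]
The case $\Gamma=\Ver_{g,n}$ is immediate: only $\Gamma'=\Ver_{g,n}$ contributes, $|C(\Ver_{g,n},\Ver_{g,n})|=1$, and $|\Aut(\Ver_{g,n})|=n!$, giving $1$.

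The key observation for the case $\Gamma\neq \Ver_{g,n}$ is to reorganise the sum over $\Gamma'\geq\Gamma$ weighted by $|C(\Gamma,\Gamma')|$ as a single sum over subsets $E\subset E(\Gamma)$. Indeed, by the definition of $C(\Gamma,\Gamma')$, specifying a pair $(\Gamma',E)$ with $\Gamma'\geq \Gamma$ and $E\in C(\Gamma,\Gamma')$ is the same as specifying a subset $E\subset E(\Gamma)$ (and then $\Gamma'$ is whatever graph results from contracting $E$). Under this rewriting, $|E(\Gamma')|=|E(\Gamma)|-|E|$, so
\[
\sum_{\Gamma\leq\Gamma'\leq\Ver_{g,n}} (-1)^{|E(\Gamma')|}\,|C(\Gamma,\Gamma')|
\;=\; \sum_{E\subset E(\Gamma)} (-1)^{|E(\Gamma)|-|E|}
\;=\; (-1)^{|E(\Gamma)|}\sum_{k=0}^{|E(\Gamma)|}\binom{|E(\Gamma)|}{k}(-1)^{k}.
\]
For $\Gamma\neq\Ver_{g,n}$ we have $|E(\Gamma)|\geq 1$, and the binomial identity $\sum_{k=0}^{m}\binom{m}{k}(-1)^k=0$ for $m\geq 1$ kills the sum, completing the verification.

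I do not expect a serious obstacle: the only non-routine point is the bijective rewriting in the previous paragraph, where one must check carefully that summing $(\Gamma',E)$ with $E\in C(\Gamma,\Gamma')$ really traverses each subset $E\subset E(\Gamma)$ exactly once; this is true because $\Gamma'$ is \emph{determined} by $\Gamma$ and $E$, so the contraction map $E\mapsto \Gamma/E$ gives a partition of $2^{E(\Gamma)}$ indexed by the possible $\Gamma'$, with fibres of sizes $|C(\Gamma,\Gamma')|$. Everything else reduces to bookkeeping with $|\Aut(\Ver_{g,n})|=n!$ and the binomial theorem.
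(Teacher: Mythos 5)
Your proposal is correct and is essentially the paper's own argument: after cancelling the $|\Aut(\Gamma')|$ factors, both proofs reduce to the identity $\sum_{\Gamma'\in[\Gamma,\Ver_{g,n}]}(-1)^{|E(\Gamma')|}|C(\Gamma,\Gamma')|=0$ for $\Gamma\neq\Ver_{g,n}$, established by reindexing the sum over subsets $E\subset E(\Gamma)$ and applying the alternating binomial identity. The only cosmetic difference is that you package the induction as a direct verification of the defining relation $\tilde\zeta*\tilde\mu=\delta$ together with uniqueness of the (triangular, unit-diagonal) solution, whereas the paper unrolls the same uniqueness as an explicit induction on the interval $[\Gamma,\Ver_{g,n}]$.
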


\begin{proof}
We prove by induction on the number of elements in the interval $[\Gamma,\Ver_{g,n}]$.
First consider the case $\Gamma=\Ver_{g,n}$,
and we have:
\be
\frac{(-1)^{|E(\Ver_{g,n})|}\cdot n!}{|\Aut(\Ver_{g,n})|} = 1,
\ee
which matches with the definition $\tilde\mu(\Ver_{g,n},\Ver_{g,n})=1$.

Now consider the case $\Gamma< \Ver_{g,n}$.
By the definition \eqref{eq-def-tildemu} and the induction hypothesis we have:
\ben
\tilde\mu(\Gamma, \Ver_{g,n}) &=&
-\sum_{\Gamma'\in (\Gamma,\Ver_{g,n}]}  \tilde\zeta(\Gamma,\Gamma') \tilde\mu(\Gamma',\Ver_{g,n})\\
&=&
-\sum_{\Gamma'\in (\Gamma,\Ver_{g,n}]}  \frac{|\Aut(\Gamma')|}{|\Aut(\Gamma)|}|C(\Gamma,\Gamma')|
\cdot
\frac{(-1)^{|E(\Gamma')|}\cdot n!}{|\Aut(\Gamma')|}\\
&=& - \frac{n!}{|\Aut(\Gamma)|} \sum_{\Gamma'\in (\Gamma,\Ver_{g,n}]}
(-1)^{|E(\Gamma')|}\cdot |C(\Gamma,\Gamma')|,
\een
thus it suffices to prove that:
\ben
-\sum_{\Gamma'\in (\Gamma,\Ver_{g,n}]}
(-1)^{|E(\Gamma')|}\cdot |C(\Gamma,\Gamma')|
=(-1)^{|E(\Gamma)|},
\een
or equivalently,
\ben
\sum_{\Gamma'\in [\Gamma,\Ver_{g,n}]}
(-1)^{|E(\Gamma')|}\cdot |C(\Gamma,\Gamma')| =0,
\een
for every $\Gamma\in\cG_{g,n}^c$ with $\Gamma\not= \Ver_{g,n}$.
Recall that $|C(\Gamma',\Gamma)|$ is the number of ways to choose a subset $E\subset E(\Gamma)$
such that $\Gamma'$ is obtained from $\Gamma$ by contracting the edges in $E$,
thus the left-hand side of the above equation can be rewritten as:
\ben
\sum_{\Gamma'\in [\Gamma,\Ver_{g,n}]}
(-1)^{|E(\Gamma')|}\cdot |C(\Gamma,\Gamma')|
&=&\sum_{E\subset E(\Gamma)} (-1)^{|E(\Gamma)\backslash E|}\\
&=&(-1)^{E(\Gamma)}\cdot \sum_{E\subset E(\Gamma)} (-1)^{|E|}.
\een
Notice that $\Gamma \not= \Ver_{g,n}$ implies that $E(\Gamma)$ is a non-empty set,
thus
\ben
\sum_{E\subset E(\Gamma)} (-1)^{|E|}=0
\een
holds.
This completes the proof.
\end{proof}

\subsection{Generalized M\"obius inversion formula}

Now let us consider the analogue of the M\"obius inversion formula
(see Theorem \ref{thm-Mob-inv})
for such a pair of functions $(\tilde\zeta,\tilde\mu)$ on $\cG_{g,n}^c$.
The result is:
\begin{Theorem}
\label{thm-generalized-Mobius}
Let $\cG_{g,n}^c$ be the partially-ordered set of connected stable graphs
of genus $g$ with $n$ external edges.
Let $\tilde f:\cG_{g,n}^c\to \bR$ be an arbitrary function on $\cG_{g,n}^c$,
and let $\tilde g :\cG_{g,n}^c\to \bR$ be another function defined by:
\be
\label{eq-def-tilde-g-general}
\tilde g(x) :=\sum_{y \in \cG_{g,n}^c} \tilde f(y) \tilde\zeta(y,x)
=\sum_{y \leq x} \tilde f(y) \tilde\zeta(y,x).
\ee
Then we have:
\be
\label{eq-orb-Mob-inv}
\tilde f(x)= \sum_{y\in \cG_{g,n}^c} \tilde g(y) \tilde\mu(y,x)
=\sum_{y\leq x} \tilde g(y) \tilde\mu(y,x).
\ee
\end{Theorem}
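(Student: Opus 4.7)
The plan is to mimic the classical proof of the M\"obius inversion formula (Theorem \ref{thm-Mob-inv}), using the convolution identity \eqref{eq-inverse-delta} that the generalized zeta function $\tilde\zeta$ and the generalized M\"obius function $\tilde\mu$ satisfy in the incidence algebra $I(\cG_{g,n}^c,\bR)$. The poset $\cG_{g,n}^c$ is finite (by stability), so there is no convergence issue and all summations below are genuinely finite.

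First, I would simply substitute the defining formula \eqref{eq-def-tilde-g-general} for $\tilde g(y)$ into the right-hand side of \eqref{eq-orb-Mob-inv}, obtaining a double sum
\[
\sum_{y\le x}\tilde g(y)\tilde\mu(y,x)
= \sum_{y\le x}\Bigl(\sum_{z\le y}\tilde f(z)\tilde\zeta(z,y)\Bigr)\tilde\mu(y,x).
\]
Next I would interchange the order of summation. Since the index set $\{(y,z):z\le y\le x\}$ is finite, this is a purely combinatorial rearrangement; collecting the $\tilde f(z)$ factor gives
\[
\sum_{y\le x}\tilde g(y)\tilde\mu(y,x)
= \sum_{z\le x}\tilde f(z)\Bigl(\sum_{z\le y\le x}\tilde\zeta(z,y)\tilde\mu(y,x)\Bigr).
\]

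At this point the inner sum is exactly $\sum_{y\in\cG_{g,n}^c}\tilde\zeta(z,y)\tilde\mu(y,x)$, because $\tilde\zeta(z,y)=0$ unless $z\le y$ and $\tilde\mu(y,x)=0$ unless $y\le x$. Applying the convolution identity \eqref{eq-inverse-delta} established in the previous proposition, this inner sum equals $\delta(z,x)$. Hence the only surviving term in the outer sum is $z=x$, and one is left with $\tilde f(x)\cdot 1=\tilde f(x)$, which is the desired identity.

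There is no real obstacle: the argument is formal once one has the two-sided inverse relation between $\tilde\zeta$ and $\tilde\mu$. The only subtle point worth remarking is that the classical statement in Theorem \ref{thm-Mob-inv} needs an auxiliary finiteness hypothesis (the existence of a minimal $p\in P$ below which $f$ vanishes) in order for $g$ to be well-defined on a general locally finite poset, whereas here $\cG_{g,n}^c$ is itself finite, so \eqref{eq-def-tilde-g-general} is automatically a finite sum and no such hypothesis is needed. For clarity I would mention this briefly at the start of the proof.
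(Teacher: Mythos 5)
Your proof is correct and is essentially identical to the paper's: both substitute the definition of $\tilde g$, interchange the (finite) double sum, and apply the convolution identity $\tilde\zeta * \tilde\mu = \delta$ from the preceding proposition. Your closing remark about finiteness of $\cG_{g,n}^c$ replacing the auxiliary hypothesis of the classical theorem is also made in the paper, as a separate remark following the proof.
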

\begin{proof}
This is a straightforward consequence of the fact that $\tilde\zeta * \tilde\mu =\delta$
in the incidence algebra.
In fact, by \eqref{eq-inverse-delta} we have:
\begin{equation*}
\sum_{y\leq x} \tilde g(y) \tilde\mu(y,x)
=\sum_{z\leq y \leq x} \tilde f(z) \tilde\zeta(z,y) \tilde\mu (y,x)
=\sum_{z\leq x} \tilde f(z) \delta(z,x) = \tilde f(x).
\end{equation*}
Thus the conclusion holds.
\end{proof}

\begin{Remark}
Here we do not need the special element $p$ as in Theorem \ref{thm-Mob-inv},
since $\cG_{g,n}^c$ is a finite set and thus the function $\tilde g$
given by \eqref{eq-def-tilde-g-general} is always well-defined.
\end{Remark}

\section{Realization of the Generalized M\"obius Inversion Formula}
\label{realization-duality}

In this subsection we consider the realization of the above inversion theorem
by assigning suitable Feynman rules to the stable graphs.
In this way we give an interpretation of the realization of the duality theorem \eqref{thm-pre-duality}
as a generalization of the M\"obius inversion formula.

\subsection{Assigning Feynman rules to stable graphs}

In this subsection we assign a particular Feynman rule to the stable graphs,
and choose the function $\tilde f$ to be the weight of a graph
with respect to this Feynman rule.

Let $\{F_{g,n}\}_{2g-2+n>0}$ be a family of formal variables
(formal functions, formal power series, etc.),
and let $\kappa$ be another formal variable.
We consider the following Feynman rule for every stable graph $\Gamma$:
\be
\begin{split}
&v\in V(\Gamma) \quad\mapsto\quad w_v:= F_{g_v,\val_v};\\
&e\in E(\Gamma) \quad\mapsto\quad w_e:=\kappa,
\end{split}
\ee
and define the weight of $\Gamma$ to be:
\be
\label{eq-FR-stablegraph}
w_\Gamma :=\prod_{v\in V(\Gamma)} w_v
\cdot \prod_{v\in E(\Gamma)} w_e
= \kappa^{|E(\Gamma)|}\cdot \prod_{v\in V(\Gamma)} F_{g_v,\val_v}.
\ee

Now fix a pair $(g,n)$ with $2g-2+n>0$,
and let $\cG_{g,n}^c$ be the set of all connected stable graphs
of genus $g$ with $n$ external edges,
equipped with the partial ordering discussed in \S \ref{sec-cG-order}.
Moreover,
we take the field $\bK$ to be an arbitrary field containing all polynomials in $\{F_{g,n}\}$ and $\kappa$.
Define the $\bK$-valued function $\tilde f$ on $\cG_{g,n}^c$ by
simply taking the weight of the graph with respect to the above Feynman rule:
\be
\label{eq-def-tildef}
\tilde f(\Gamma) := w_\Gamma,
\qquad \forall \Gamma\in \cG_{g,n}^c.
\ee
In particular,
we have:
\be
\tilde f(\Ver_{g,n}) = F_{g,n},
\ee
where $\Ver_{g,n}\in\cG_{g,n}$ is the stable vertex of genus $g$ and valence $n$.
Moreover, denote by $\Ver_{g,n}^\vee := \phi_{g,n}(\Ver_{g,n})$ the dotted stable vertex
of genus $g$ and valence $n$
(see \S \ref{sec-pre-dotted}),
then by definition we have:
\be
\label{eq-tildef-dualver}
\tilde f(\Ver_{g,n}^\vee) = n!\cdot \tilde f(\wcF_{g,n}) = n!\cdot \wF_{g,n},
\ee
where $\wF_{g,n}$ is the realization of the abstract $n$-point function $\wcF_{g,n}$
(see \S \ref{sec-pre-absqft})
with respect to the Feynman rule \eqref{eq-FR-stablegraph}:
\be
\label{eq-real-Fgn}
\wF_{g,n}:=
\sum_{\Gamma\in\cG_{g,n}^c} \frac{w_\Gamma}{|\Aut(\Gamma)|}
= \sum_{\Gamma\in\cG_{g,n}^c} \frac{\kappa^{|E(\Gamma)|}}{|\Aut(\Gamma)|}
\cdot \prod_{v\in V(\Gamma)} F_{g_v,\val_v}.
\ee

\subsection{Computation of the function $\tilde g$}

Now we define the function $\tilde g$ on the partially-ordered set $\cG_{g,n}^c$
using the formula \eqref{eq-def-tilde-g-general}:
\be
\label{eq-tildeg-real}
\tilde g(\Gamma):= \sum_{\Gamma'\in \cG_{g,n}^c} \tilde f(\Gamma')
\tilde\zeta(\Gamma',\Gamma)
=\sum_{\Gamma' \leq \Gamma} \frac{|\Aut(\Gamma)|}{|\Aut(\Gamma')|}
\cdot |C(\Gamma',\Gamma)| \cdot
w_{\Gamma'}.
\ee
where $\tilde f$ is given by \eqref{eq-def-tildef}.
Then it is clear that:
\be
\tilde g(\Ver_{g,n}) = \sum_{\Gamma' \leq \Ver_{g,n}}
\frac{n!}{|\Aut(\Gamma')|} \cdot w_{\Gamma'}
=n! \cdot \sum_{\Gamma'\in\cG_{g,n}^c}\frac{w_{\Gamma'}}{|\Aut(\Gamma)|}
= n!\cdot \wF_{g,n},
\ee
and thus by \eqref{eq-tildef-dualver} we have:
\be
\tilde g(\Ver_{g,n}) = \tilde f (\Ver_{g,n}^\vee).
\ee

Our main result in this subsection is the following:
\begin{Theorem}
\label{thm-tildeg-general}
For every $\Gamma\in \cG_{g,n}^c$, we have:
\be
\begin{split}
\tilde g(\Gamma) = &
\kappa^{|E(\Gamma)|}\cdot \prod_{v\in V(\Gamma)} \tilde g(\Ver_{g_v,\val_v})\\
=&\kappa^{|E(\Gamma)|}\cdot \prod_{v\in V(\Gamma)}
\bigg(\val_v!\cdot \wF_{g_v,\val_v}\bigg).
\end{split}
\ee
\end{Theorem}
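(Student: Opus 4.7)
The plan is to prove the claim by setting up a weighted bijection between the pairs $(\Gamma', E)$ appearing in the defining sum \eqref{eq-tildeg-real} of $\tilde g(\Gamma)$ and tuples of local data attached to the vertices of $\Gamma$. First I would unfold $\tilde g(\Gamma)$ as a double sum over pairs $(\Gamma', E)$, where $\Gamma' \in \cG_{g,n}^c$ and $E \in C(\Gamma', \Gamma)$, with weight $w_{\Gamma'}/|\Aut(\Gamma')|$ (after factoring out $|\Aut(\Gamma)|$). The crucial structural observation is that such a pair is equivalent to the following data at each vertex $v \in V(\Gamma)$: a connected stable graph $\Gamma_v'$ of genus $g_v$ and valence $\val_v$ together with a bijection between the external edges of $\Gamma_v'$ and the half-edges $H(v)$ at $v$; the ambient graph $\Gamma'$ is reconstructed by gluing the $\Gamma_v'$'s according to the edge-pairing of $H(\Gamma)$ into $E(\Gamma)$, with $E = \bigsqcup_v E(\Gamma_v')$.

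Next I would factor the weight. Because $V(\Gamma')=\bigsqcup_v V(\Gamma_v')$, and $E(\Gamma')$ partitions into the edges of $\Gamma$ (the edges not contracted) and $E=\bigsqcup_v E(\Gamma_v')$ (the contracted edges), the Feynman rule \eqref{eq-FR-stablegraph} factorises as
\begin{equation*}
w_{\Gamma'} \;=\; \kappa^{|E(\Gamma)|} \prod_{v\in V(\Gamma)} w_{\Gamma_v'}.
\end{equation*}
The only nontrivial step is then the automorphism bookkeeping. I would handle this by a labelling argument: temporarily label the half-edges of $\Gamma$ to trivialise $\Aut(\Gamma)$ (there are $|H(\Gamma)|!/|\Aut(\Gamma)|$ distinct labellings); the induced labelling of the external edges of each $\Gamma_v'$ then gives, for every unlabelled $\Gamma_v'$, exactly $\val_v!/|\Aut(\Gamma_v')|$ distinct $H(v)$-labelled graphs with trivial residual labelled-automorphism group. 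The labelled reassembly $(\Gamma_v'{}^{\text{lab}})_v \mapsto (\Gamma'{}^{\text{lab}},E)$ is then a genuine bijection, so by orbit-stabiliser
\begin{equation*}
\frac{|\Aut(\Gamma)|\cdot|C(\Gamma',\Gamma)|}{|\Aut(\Gamma')|}
\;=\; \sum \prod_{v\in V(\Gamma)} \frac{\val_v!}{|\Aut(\Gamma_v')|},
\end{equation*}
where the sum runs over tuples $(\Gamma_v')_v$ producing the iso class $(\Gamma',E)$.

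Assembling these ingredients, the sum over $\Gamma'\le\Gamma$ collapses into a product over $v\in V(\Gamma)$ of decoupled sums over $\Gamma_v'\in\cG_{g_v,\val_v}^c$ with weight $\val_v!\,w_{\Gamma_v'}/|\Aut(\Gamma_v')|$, and by \eqref{eq-real-Fgn} each such factor is precisely $\val_v!\cdot\widehat F_{g_v,\val_v}$. Together with the $\kappa^{|E(\Gamma)|}$ prefactor this yields the claimed formula. The main obstacle I expect is the automorphism counting in the middle step: one must check that the labelling--unlabelling device respects the identification $V(\Gamma')=\bigsqcup_v V(\Gamma_v')$, i.e.\ that a labelled automorphism of $\Gamma'$ which preserves $E$ induces a labelled automorphism of each $\Gamma_v'$ separately and conversely. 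This decomposition is essentially the assertion that contracting $E$ is compatible with the $\Aut$-action, which can be verified directly from the definitions of half-edges and the contraction procedure given in \S\ref{sec-cG-order}.
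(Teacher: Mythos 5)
Your route is genuinely different from the paper's. The paper does not attack the sum \eqref{eq-tildeg-real} combinatorially at all: it packages it into a vector $\widehat g(\Gamma)=\sum_{\Gamma'\le\Gamma}\frac{|\Aut(\Gamma)|}{|\Aut(\Gamma')|}|C(\Gamma',\Gamma)|\,\Gamma'\in\cV_{g,n}^c$, shows by the sign-cancellation argument of Lemma \ref{lem-graph-1} that $\sum_\Gamma\frac{(-1)^{|E(\Gamma)|}}{|\Aut(\Gamma)|}\widehat g(\Gamma)=\frac{1}{n!}\Ver_{g,n}$, compares with the duality theorem \eqref{thm-pre-duality}, and deduces $\widehat g(\Gamma)=(-1)^{|E(\Gamma)|}\phi_{g,n}(\Gamma)$ by an upper-triangularity induction; the product formula then comes from the gluing description of dotted graphs. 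Your proposal instead proves the identity directly by decomposing each pair $(\Gamma',E)$ into local graphs $\Gamma'_v$ at the vertices of $\Gamma$ and doing the automorphism bookkeeping by hand. This is a legitimate and in some ways more self-contained alternative: it does not invoke the involutivity of $\phi_{g,n}$ from \cite{wz2}, and in effect it reproves the piece of that duality that is actually used here. What it costs you is that all the delicacy is concentrated in one counting identity, and that is exactly where your write-up goes wrong.

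The displayed identity
\begin{equation*}
\frac{|\Aut(\Gamma)|\cdot|C(\Gamma',\Gamma)|}{|\Aut(\Gamma')|}
=\sum\prod_{v\in V(\Gamma)}\frac{\val_v!}{|\Aut(\Gamma'_v)|}
\end{equation*}
with the sum over tuples ``producing the iso class $(\Gamma',E)$'' is false as stated. Take $\Gamma\in\cG_{1,2}^c$ to be the genus-$0$ vertex with one loop and two legs, and $\Gamma'$ the graph with two trivalent genus-$0$ vertices joined by two parallel edges, one leg on each. Then $|\Aut(\Gamma)|=|\Aut(\Gamma')|=4$ and $|C(\Gamma',\Gamma)|=2$, so the left side is $2$; but each admissible local graph $\Gamma'_v$ is the two-vertex element of $\cG_{0,4}^c$ with $\val_v!/|\Aut(\Gamma'_v)|=24/8=3$, and there are two $H(v)$-labellings of it that reassemble to $\Gamma'$, so your right side gives $6$. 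The source of the error is double counting: the factor $\val_v!/|\Aut(\Gamma'_v)|$ already enumerates \emph{all} $H(v)$-labellings of $\Gamma'_v$ (this is the Lemma in \S 2.3 of the paper), most of which glue to \emph{different} pairs $(\Gamma'',E'')$, so it cannot also appear as the weight of a single term attached to a fixed $(\Gamma',E)$. The correct local statement is that the number of tuples of $H(v)$-labelled local graphs (each counted once, their labelled automorphism groups being trivial) that reassemble to $\Gamma'$, summed over $E\in C(\Gamma',\Gamma)$, equals $|\Aut(\Gamma)|\cdot|C(\Gamma',\Gamma)|/|\Aut(\Gamma')|$; the factors $\val_v!/|\Aut(\Gamma'_v)|$ only emerge afterwards, when you sum over all $\Gamma'$ at once and regroup the labelled tuples by the isomorphism class of each $\Gamma'_v$, using that the Feynman weight is blind to the labels. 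Your surrounding prose (the labelled reassembly being a bijection, orbit--stabilizer) indicates you have the right mechanism in mind, so the proof is repairable; but as written the pivotal equation would not survive the example above, and you should restate it in the labelled form before summing over $\Gamma'$.
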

\begin{proof}
This theorem is proved using the duality theorem (see \S \ref{sec-pre-duality}).

Given a stable graph $\Gamma\in \cG_{g,n}^c$,
let $\widehat g(\Gamma)$ be the following formal summation of stable graphs
in $\cG_{g,n}^c$:
\be
\label{eq-pf-def-hatg}
\widehat g(\Gamma):= \sum_{\Gamma' \leq \Gamma}
\frac{|\Aut(\Gamma)|}{|\Aut(\Gamma')|}\cdot|C(\Gamma', \Gamma)|\cdot \Gamma'
\in \cV_{g,n}^c.
\ee
Then $\tilde g(\Gamma)$ (defined by \eqref{eq-tildeg-real})
is obtained from the formal summation $\widehat g(\Gamma)$ by assigning the Feynman rule
\eqref{eq-FR-stablegraph}.

Consider the following weighted summation of $\widehat g(\Gamma)$ over all $\Gamma\in\cG_{g,n}^c$:
\begin{equation*}
\begin{split}
\sum_{\Gamma\in \cG_{g,n}^c}
\frac{(-1)^{|E(\Gamma)|}}{|\Aut(\Gamma)|}\widehat g(\Gamma)=&
\sum_{\Gamma\in\cG_{g,n}^c}\sum_{\Gamma'\leq \Gamma}
\frac{(-1)^{|E(\Gamma)|}}{|\Aut(\Gamma')|}\cdot|C(\Gamma', \Gamma)|\cdot \Gamma'\\
=& \sum_{\Gamma'\in\cG_{g,n}^c}
\bigg(\sum_{\Gamma\in[\Gamma',\Ver_{g,n}]}(-1)^{|E(\Gamma)|}
\cdot|C(\Gamma', \Gamma)| \bigg)
\frac{\Gamma'}{|\Aut(\Gamma')|}.
\end{split}
\end{equation*}
Using the same argument we've used in the proof of Lemma \ref{lem-graph-1},
we know that for a fixed $\Gamma'\in \cG_{g,n}^c$ with $\Gamma' \not= \Ver_{g,n}$,
the following identity holds:
\begin{equation*}
\sum_{\Gamma\in[\Gamma',\Ver_{g,n}]} (-1)^{|E(\Gamma)|} \cdot|C(\Gamma', \Gamma)|
=(-1)^{|E(\Gamma')|}\cdot \sum_{E\subset E(\Gamma')} (-1)^{|E|} =0,
\end{equation*}
therefore the above weighted summation becomes:
\be
\label{eq-pf-hat-g}
\begin{split}
&\sum_{\Gamma\in \cG_{g,n}^c}
\frac{(-1)^{|E(\Gamma)|}}{|\Aut(\Gamma)|}\widehat g(\Gamma)\\
=&
\bigg(\sum_{\Gamma\in[\Ver_{g,n},\Ver_{g,n}]}(-1)^{|E(\Gamma)|}
\cdot|C(\Ver_{g,n}, \Gamma)| \bigg) \cdot
\frac{\Ver_{g,n}}{|\Aut(\Ver_{g,n})|}\\
=& (-1)^0\cdot
|C(\Ver_{g,n}, \Ver_{g,n})|\cdot
\frac{\Ver_{g,n}}{|\Aut(\Ver_{g,n})|}\\
=&\frac{1}{n!}\cdot \Ver_{g,n}.
\end{split}
\ee
Now comparing the equality \eqref{eq-pf-hat-g}
with the duality theorem \eqref{thm-pre-duality} in the abstract QFT,
we obtain the following relation:
\begin{equation*}
\sum_{\Gamma\in \cG_{g,n}^c}
\frac{(-1)^{|E(\Gamma)|}}{|\Aut(\Gamma)|}\widehat g(\Gamma)
=\frac{1}{n!}\cdot \Ver_{g,n}=
\sum_{\Gamma^\vee\in\cG_{g,n}^{\vee,c}} \frac{1}{|\Aut(\Gamma^\vee)|} \Gamma^\vee,
\end{equation*}
i.e.,
\be
\label{eq-pf-relation-dual}
\sum_{\Gamma\in \cG_{g,n}^c}
\frac{(-1)^{|E(\Gamma)|}}{|\Aut(\Gamma)|}\widehat g(\Gamma)=
\sum_{\Gamma\in\cG_{g,n}^{c}} \frac{1}{|\Aut(\Gamma)|} \phi_{g,n}(\Gamma),
\ee
where $\phi_{g,n}$ is the duality map on $\cV_{g,n}^c$ which maps a graph in the usual sense
to a dotted graph of the same shape.
Then we claim that:
\be
\label{eq-pf-claim}
\widehat g(\Gamma) = (-1)^{|E(\Gamma)|}\cdot \phi_{g,n}(\Gamma),
\qquad \forall \Gamma\in\cG_{g,n}^c.
\ee

Now let us prove the claim \eqref{eq-pf-claim}.
First notice that $\cG_{g,n}^c$ is a partially ordered set
where $\Gamma\leq V_{g,n}$ for every $\Gamma\in\cG_{g,n}^c$;
and moreover,
$\widehat g(\Gamma)$ and $\phi_{g,n}(\Gamma)$ are both of the form
(see \eqref{eq-dotgr-form} and \eqref{eq-pf-def-hatg}):
\be
\label{eq-hatg-dot-form}
\pm \Gamma
+ \sum_{\Gamma' < \Gamma} \text{coefficient}\cdot \Gamma'.
\ee
Using this property,
we are able to prove \eqref{eq-pf-claim} inductively.
First consider the case $\Gamma = \Ver_{g,n}$.
By \eqref{eq-hatg-dot-form} we know that for every $\Gamma' \in\cG_{g,n}^c$ with $\Gamma'\not= \Ver_{g,n}$,
the linear combination of graph $\phi_{g,n}(\Gamma')\in\cV_{g,n}^c$ does not contain a nonzero multiple of $\Ver_{g,n}$
since $\Ver_{g,n}> \Gamma'$.
Then by comparing the coefficient of $\Ver_{g,n}$ in \eqref{eq-pf-relation-dual} we get:
\be
\widehat g(\Ver_{g,n}) = \phi_{g,n} (\Ver_{g,n}).
\ee
Now suppose that \eqref{eq-pf-claim} is true for all $\Gamma\in (\Gamma_0,\Ver_{g,n}]$.
By the induction hypothesis and the equality \eqref{eq-pf-relation-dual} we have:
\be
\sum_{\Gamma\in \cG_{g,n}^c\backslash (\Gamma_0,\Ver_{g,n}]}
\frac{(-1)^{|E(\Gamma)|}}{|\Aut(\Gamma)|}\widehat g(\Gamma)=
\sum_{\Gamma\in\cG_{g,n}^{c}\backslash (\Gamma_0,\Ver_{g,n}]}
\frac{1}{|\Aut(\Gamma)|} \phi_{g,n}(\Gamma).
\ee
Now consider the coefficient of $\Gamma_0$ in this equation,
we get:
\ben
\widehat g(\Gamma_0) = (-1)^{|E(\Gamma_0)|}\cdot \phi_{g,n}(\Gamma_0)
\een
again by using \eqref{eq-hatg-dot-form}.
Thus the claim \eqref{eq-pf-claim} is proved by induction.

Therefore,
when assigning the Feynman rule \eqref{eq-FR-stablegraph} to a stable graph $\Gamma$,
we get:
\be
\tilde g (\Gamma) = w_{\widehat g(\Gamma)}
= (-1)^{|E(\Gamma)|} \cdot w_{\phi_{g,n}(\Gamma)}.
\ee
Recall that in the construction of dotted stable graphs,
we glue dotted stable vertices (whose weight are given by $n!\cdot \wF_{g_v,\val_v}$)
together,
and multiply a factor $(-1)$ whenever a new internal edge is obtained.
Thus one easily sees that:
\begin{equation*}
\begin{split}
w_{\phi_{g,n}(\Gamma)}
=& (-\kappa)^{|E(\Gamma)|} \cdot \prod_{v\in V(\Gamma)}
w_{\phi_{g_v,\val_v}(\Ver_{g_v,\val_v})}\\
=&(-\kappa)^{|E(\Gamma)|}\cdot \prod_{v\in V(\Gamma)}
\bigg(\val_v!\cdot \wF_{g_v,\val_v}\bigg),
\end{split}
\end{equation*}
and then:
\begin{equation*}
\tilde g (\Gamma)
=\kappa^{|E(\Gamma)|}\cdot \prod_{v\in V(\Gamma)}
\bigg(\val_v!\cdot \wF_{g_v,\val_v}\bigg).
\end{equation*}
Now we have finished the proof.
\end{proof}

\subsection{Realization of the inversion formula}

Now we can use the above results to write down the realization of
the generalized M\"obius inversion formula.

Assign the Feynman rule \eqref{eq-FR-stablegraph} to stable graphs,
and define two function $\tilde f$ and $\tilde g$ on the locally finite partially-ordered set $\cG_{g,n}^c$ by:
\be
\label{eq-tilde-f-g}
\begin{split}
&\tilde f (\Gamma) := w_\Gamma,\\
&\tilde g(\Gamma):= \sum_{\Gamma'\leq \Gamma} \tilde f(\Gamma')
\tilde\zeta(\Gamma',\Gamma).
\end{split}
\ee
Then Theorem \ref{thm-generalized-Mobius} gives the following inversion formula:
\be
w_\Gamma=
\tilde f(\Gamma) = \sum_{\Gamma' \leq \Gamma}
\tilde g (\Gamma') \tilde \mu (\Gamma',\Gamma),
\ee
which represents the function $\tilde f$ in terms of $\tilde g$ and the generalized M\"obius function $\tilde\mu$
defined in \S \ref{sec-gen-zeta-mu}.

Moreover,
recall that in Lemma \ref{lem-graph-1} we have already found a closed formula
for $\tilde\mu(\Gamma,\Ver_{g,n})$.
Using this formula we have:
\begin{equation*}
F_{g,n}=
w_{\Ver_{g,n}} = \sum_{\Gamma'\leq \Ver_{g,n}} \tilde g(\Gamma') \tilde\mu(\Gamma,\Ver_{g,n})
= \sum_{\Gamma' \in\cG_{g,n}^c} \frac{(-1)^{|E(\Gamma)|}\cdot n!}{|\Aut(\Gamma)|}\cdot \tilde g(\Gamma').
\end{equation*}
Now denote by $\tF_{g,n}:= n!\cdot \wF_{g,n}$,
and then by Theorem \ref{thm-tildeg-general} we obtain
the following realization of the generalized M\"obius inversion formula:
\begin{Theorem}
[\cite{wz2}]
\label{thm-realization-inversion}
Let $\{F_{g,n}\}_{2g-2+n>0}$ and $\kappa$ be some formal variables,
and let $\tF_{g,n}$ be defined by the graph sum formula:
\be
\label{eq-graphsum-tildeF}
\tF_{g,n} := n! \cdot
\sum_{\Gamma\in \cG_{g,n}^c} \frac{\kappa^{|E(\Gamma)|}}{|\Aut(\Gamma)|}
\prod_{v\in V(\Gamma)} F_{g_v,\val_v}.
\ee
Then we have the following inversion formula:
\be
\label{eq-real-Mob-thm}
F_{g,n} = n! \cdot
\sum_{\Gamma\in \cG_{g,n}^c} \frac{(-\kappa)^{|E(\Gamma)|}}{|\Aut(\Gamma)|}
\prod_{v\in V(\Gamma)} \tF_{g_v,\val_v}.
\ee
\end{Theorem}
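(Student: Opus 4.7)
The plan is to obtain this theorem as a direct specialization of the generalized M\"obius inversion formula (Theorem \ref{thm-generalized-Mobius}) at the maximal element $\Ver_{g,n}$ of the poset $\cG_{g,n}^c$, using the two explicit evaluations that have been established just before the statement: the closed form for $\tilde\mu(\Gamma,\Ver_{g,n})$ in Lemma \ref{lem-graph-1}, and the product form for $\tilde g(\Gamma)$ in Theorem \ref{thm-tildeg-general}. No new combinatorial identity is needed; the work is entirely bookkeeping.

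First I would take $\tilde f$ and $\tilde g$ to be the pair of $\bK$-valued functions on $\cG_{g,n}^c$ defined in \eqref{eq-tilde-f-g}, where $\tilde f(\Gamma)=w_\Gamma$ is the weight with respect to the Feynman rule \eqref{eq-FR-stablegraph}. By definition, $\tilde f(\Ver_{g,n})=w_{\Ver_{g,n}}=F_{g,n}$, since the stable vertex of genus $g$ and valence $n$ has no internal edges and a single vertex of genus $g$ and valence $n$. Applying Theorem \ref{thm-generalized-Mobius} with $x=\Ver_{g,n}$ then yields the identity
\[
F_{g,n}=\tilde f(\Ver_{g,n})=\sum_{\Gamma\leq \Ver_{g,n}} \tilde g(\Gamma)\,\tilde\mu(\Gamma,\Ver_{g,n})=\sum_{\Gamma\in\cG_{g,n}^c} \tilde g(\Gamma)\,\tilde\mu(\Gamma,\Ver_{g,n}),
\]
where the second equality uses that $\Gamma\leq \Ver_{g,n}$ for every $\Gamma\in\cG_{g,n}^c$.

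Next I would substitute the two known closed forms into the right-hand side. By Lemma \ref{lem-graph-1},
\[
\tilde\mu(\Gamma,\Ver_{g,n})=\frac{(-1)^{|E(\Gamma)|}\cdot n!}{|\Aut(\Gamma)|},
\]
and by Theorem \ref{thm-tildeg-general} together with the definition $\tF_{g_v,\val_v}:=\val_v!\cdot \wF_{g_v,\val_v}$,
\[
\tilde g(\Gamma)=\kappa^{|E(\Gamma)|}\cdot\prod_{v\in V(\Gamma)}\bigl(\val_v!\cdot \wF_{g_v,\val_v}\bigr)=\kappa^{|E(\Gamma)|}\cdot\prod_{v\in V(\Gamma)}\tF_{g_v,\val_v}.
\]
Multiplying these together and collecting the sign $(-1)^{|E(\Gamma)|}$ with the factor $\kappa^{|E(\Gamma)|}$ into $(-\kappa)^{|E(\Gamma)|}$ gives exactly the graph sum \eqref{eq-real-Mob-thm}.

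There is essentially no obstacle in the argument, since the heavy lifting has already been carried out in Lemma \ref{lem-graph-1} (the sign-enumeration of subsets of $E(\Gamma)$) and in Theorem \ref{thm-tildeg-general} (the reduction to the duality theorem \eqref{thm-pre-duality}). The only point worth checking carefully is bookkeeping: that the $\tF_{g_v,\val_v}$ arising from $\tilde g$ match the definition \eqref{eq-graphsum-tildeF}, i.e.\ that $\val_v!\cdot\wF_{g_v,\val_v}$ is indeed the realization of the dotted stable vertex appearing in the duality picture; this is exactly \eqref{eq-tildef-dualver}, so the identification is immediate.
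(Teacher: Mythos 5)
Your proposal is correct and follows essentially the same route as the paper: both apply Theorem \ref{thm-generalized-Mobius} at the maximal element $\Ver_{g,n}$, substitute the closed form of $\tilde\mu(\Gamma,\Ver_{g,n})$ from Lemma \ref{lem-graph-1} and the product formula for $\tilde g(\Gamma)$ from Theorem \ref{thm-tildeg-general}, and absorb the sign into $(-\kappa)^{|E(\Gamma)|}$. The bookkeeping identifications ($\tilde f(\Ver_{g,n})=F_{g,n}$ and $\tF_{g_v,\val_v}=\val_v!\cdot\wF_{g_v,\val_v}$) are exactly as in the text.
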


\begin{Remark}
In the previous work \cite{wz2} we have already obtained the above inversion formula
by assigning Feynman rules directly to the duality theorem for stable graphs,
and interpreted it as a pair of transformations on the space of theories
which are inverse to each other.
Here by a theory we simply mean a collection of formal variables $\{F_{g,n}\}_{2g-2+n>0}$
where $F_{g,n}$ is supposed to be formally understood as the $n$-point correlation function of genus $g$,
see \cite[\S 4]{wz2} for details.
Here we interpret this inversion formula in a fashion similar to
the well-known M\"obius inversion formula which is  fundamental in combinatorial analysis.
\end{Remark}

\subsection{Interpretation by formal Gaussian integrals and Fourier inversion theorem}
\label{sec-integral}

The realization \eqref{eq-real-Mob-thm} of the M\"obius inversion formula can also
be interpreted using the formal Gaussian integrals and the Fourier inverse theorem.
Now let us explain this.
All the computations in this subsection will be carries out formally
(as computations of formal power series).

It is well-known that after assigning the Feynman rule \eqref{eq-FR-stablegraph},
the transformation from $\{F_{g,n}\}$ to
the graph sums $\{\tF_{g,n}\}$ defined by \eqref{eq-graphsum-tildeF} can be
represented in terms of a one-dimensional formal Gaussian integral
(see eg. \cite[\S 4]{wz2}):
\be
\label{eq-formal-integ}
\begin{split}
&\exp\bigg(
\sum_{2g-2+n>0}
\lambda^{2g-2} \tF_{g,n}\cdot \frac{z^n}{n!}
\bigg)\\
= &
\frac{1}{(2\pi \lambda^2 \kappa)^{1/2}}
\int \exp\bigg(
\sum_{2g-2+ n>0}
\lambda^{2g-2} F_{g,n}\cdot \frac{x^n}{n!}
-\frac{\lambda^{-2}}{2\kappa} (x-z)^2
\bigg) dx.
\end{split}
\ee
Now let us denote:
\be
\tF_{g,n}^\vee :=
n! \cdot
\sum_{\Gamma\in \cG_{g,n}^c} \frac{(-\kappa)^{|E(\Gamma)|}}{|\Aut(\Gamma)|}
\prod_{v\in V(\Gamma)} \tF_{g_v,\val_v},
\ee
then again by the formal Gaussian integral representation we have:
\begin{equation*}
\begin{split}
&\exp\bigg(
\sum_{2g-2+n>0}
\lambda^{2g-2} \tF_{g,n}^\vee  \frac{y^n}{n!}
\bigg)\\
= &
\frac{1}{(-2\pi \lambda^2 \kappa)^{1/2}}
\int \exp\bigg(
\sum_{2g-2+ n>0}
\lambda^{2g-2} \tF_{g,n} \frac{z^n}{n!}
+\frac{\lambda^{-2}}{2\kappa} (z-y)^2
\bigg) dz\\
=&  \pm
\frac{1}{2\pi i \lambda^2 \kappa}
\int\int \exp\bigg(
\sum_{2g-2+ n>0}
\lambda^{2g-2} F_{g,n} \frac{x^n}{n!}
+\frac{\lambda^{-2}}{2\kappa} \big((z-y)^2
- (x-z)^2 \big)
\bigg) dxdz\\
=& \pm
\frac{e^{\frac{\lambda^{-2}}{2\kappa}y^2}}{2\pi i \lambda^2 \kappa}
\int\int \exp\bigg( -\frac{\lambda^{-2}}{2\kappa}x^2+
\sum_{2g-2+ n>0}
\lambda^{2g-2} F_{g,n} \frac{x^n}{n!}
+\frac{\lambda^{-2}}{\kappa} (xz-yz)\bigg) dxdz.
\end{split}
\end{equation*}
Now denote $\tilde z = \frac{\lambda^{-2}}{\kappa}\cdot iz$,
then the above formal integral becomes:
\begin{equation*}
\begin{split}
& \mp \frac{e^{\frac{\lambda^{-2}}{2\kappa}y^2}}{2\pi }
\int\int \exp\bigg( -\frac{\lambda^{-2}}{2\kappa}x^2+
\sum_{2g-2+ n>0}
\lambda^{2g-2} F_{g,n} \frac{x^n}{n!}\bigg) e^{-ix\tilde z} e^{iy\tilde z}
dxd\tilde z.
\end{split}
\end{equation*}
This is the composition of a Fourier transformation and an inverse Fourier transformation,
thus the Fourier inversion theorem gives:
\begin{equation*}
\begin{split}
&\exp\bigg(
\sum_{2g-2+n>0}
\lambda^{2g-2} \tF_{g,n}^\vee  \frac{y^n}{n!}
\bigg)\\
= & \mp \exp\bigg(\frac{\lambda^{-2}}{2\kappa}y^2\bigg)
\cdot
\exp\bigg( -\frac{\lambda^{-2}}{2\kappa}x^2+
\sum_{2g-2+ n>0}
\lambda^{2g-2} F_{g,n} \frac{x^n}{n!}\bigg)\\
=& \mp \exp\bigg(\sum_{2g-2+ n>0}
\lambda^{2g-2} F_{g,n} \frac{x^n}{n!}\bigg).
\end{split}
\end{equation*}
Then one sees that here the sign $\mp$ is supposed to be $+$,
and we have:
\be
\tF_{g,n}^\vee = F_{g,n},
\qquad 2g-2+n>0.
\ee
This gives an alternative proof of the inversion formula \eqref{eq-real-Mob-thm}.

\section{Open-Closed Duality of the Orbifold Euler Characteristics of $\cM_{g,n}$ and $\Mbar_{g,n}$}
\label{sec-O-C}

In this section we present an application of the generalized M\"obius inversion formula.
We show that Theorem \ref{thm-realization-inversion} gives an
open-closed duality of the orbifold Euler characteristics of the moduli spaces
$\cM_{g,n}$ and $\Mbar_{g,n}$.

\subsection{Orbifold Euler characteristics of $\cM_{g,n}$ and $\Mbar_{g,n}$}
\label{sec-orb-intro}

First let us recall some results of the computations of the
orbifold Euler characteristics of $\cM_{g,n}$ and $\Mbar_{g,n}$ in literatures.

Given a pair of non-negative integers $(g,n)$ with $2g-2+n>0$,
the orbifold Euler characteristic of the moduli space $\cM_{g,n}$
is given by the famous Harer-Zagier formula
(see Harer-Zagier \cite{hz}; Penner \cite{pe}):
\be
\chi(\cM_{g, n})=(-1)^{n} \cdot \frac{(2 g-1) B_{2 g}}{(2 g) !}(2 g+n-3) !, \quad 2 g-2+n>0,
\ee
where $B_{2g}$ is the $(2g)$-th Bernoulli number.
See also Kontsevich \cite[Appendix D]{kon1} for another proof.

The orbifold Euler characteristic of the Deligne-Mumford compactification $\Mbar_{g,n}$ of $\cM_{g,n}$
is computed using the stratification
of $\Mbar_{g,n}$ recalled in \S \ref{sec-pre-absqft}.
Notice that $\Mbar_{g,n}$ carries a natural structure of an orbifold,
and thus in the computation of its orbifold Euler characteristics
one needs to divide the contribution of each strata $\cM_\Gamma$ by
the order of the automorphism group $\Aut(\Gamma)$.
The orbifold Euler characteristics are given by the following summation over stable graphs
(see Bini-Harer \cite{bh}):
\be
\label{eq-chi-graph}
\chi(\Mbar_{g, n})=n!\cdot
\sum_{\Gamma \in \cG_{g, n}^{c}}
\frac{1}{|\Aut(\Gamma)|} \prod_{v \in V(\Gamma)}
\chi(\cM_{g_v, \val_v}).
\ee
\begin{Example}
Using this formula and the expressions in Example \ref{eg-abstract-fe},
one can compute first a few examples of $\chi(\Mbar_{g,n})$:
\begin{equation*}
\begin{split}
&\chi(\Mbar_{0,3}) = \chi(\cM_{0,3}) =1,\\
&\chi(\Mbar_{0,4}) = \chi(\cM_{0,4}) + 3\chi(\cM_{0,3})^2 =2,\\
&\chi(\Mbar_{0,5}) = \chi(\cM_{0,5}) + 10 \chi(\cM_{0,3})\chi(\cM_{0,4})
+ 15\chi(\cM_{0,3})^3 =7,
\end{split}
\end{equation*}
and:
\begin{equation*}
\begin{split}
\chi(\Mbar_{1,1}) =& \chi(\cM_{1,1}) + \half \chi(\cM_{0,3}) = \frac{5}{12},\\
\chi(\Mbar_{1,2}) =& \chi(\cM_{1,2}) + \chi(\cM_{1,1})\chi(\cM_{0,3}) + \half \chi(\cM_{0,4}) +
\chi(\cM_{0,3})^2 = \half,\\
\chi(\Mbar_{2,0}) =& \chi(\cM_{2,0}) + \half \chi(\cM_{1,2}) + \half \chi(\cM_{1,1})^2
+\half \chi(\cM_{1,1}) \chi(\cM_{0,3})\\
&+ \frac{1}{8} \chi(\cM_{0,4}) + \frac{5}{24} \chi(\cM_{0,3})^2 =\frac{119}{1440}.
\end{split}
\end{equation*}
\end{Example}

However,
it is not practical to use the above graph sum formula to carry out the specific computations
for larger $(g,n)$
due to the complexity of listing all possible graphs (without missing and repeating)
and then computing their automorphism groups.
In the previous work \cite{wz3},
the authors have used the formalisms developed in \cite{wz} and \cite{zhou1}
to derive various recursions and formulas for $\chi(\Mbar_{g,n})$.
Moreover,
by using these approaches we have related the computations of $\chi(\Mbar_{g,n})$
to other problems such as the Ramanujan polynomials,
topological 1D gravity, and the KP hierarchy,
see \cite{wz3} for details.

Another observation about the graph sum formula \eqref{eq-chi-graph} is that
it can be solved conversely such that
we are able to obtain $\{\chi(\cM_{g,n})\}$ once we know $\{\chi(\Mbar_{g,n})\}$.
This fact can be seen by introducing a total ordering on the set
\be
\{(g,n)\in\bZ^2 \big| g\geq 0,n\geq 0, 2g-2+n>0\}
\ee
by requiring $(g_1,n_1)>(g_2,n_2)$ if and only if $g_1>g_2$, or $g_1=g_2$ and $n_1>n_2$.
Then for every pair $(g,n)$,
the orbifold Euler characteristic $\chi(\Mbar_{g,n})$ is of the form:
\ben
\chi(\Mbar_{g,n}) = \chi(\cM_{g,n}) + P_{g,n},
\een
where $P_{g,n}$ is a polynomial of $\{\chi(\cM_{h,l})\}$ with $(h,l)<(g,n)$.
Thus by induction one easily finds that $\chi(\cM_{g,n})$ is of the form:
\ben
\chi(\Mbar_{g,n}) = \chi(\cM_{g,n}) + Q_{g,n},
\een
where $Q_{g,n}$ is a polynomial of $\{\chi(\Mbar_{h,l})\}$ with $(h,l)<(g,n)$.
For examples:
\begin{equation*}
\begin{split}
\chi(\cM_{0,3}) =& \chi(\Mbar_{0,3}),\\
\chi(\cM_{0,4}) =& \chi(\Mbar_{0,4}) - 3\chi(\cM_{0,3})^2\\
=& \chi(\Mbar_{0,4}) - 3\chi(\Mbar_{0,3})^2,\\
\chi(\cM_{0,5}) =& \chi(\Mbar_{0,5}) - 10 \chi(\cM_{0,3})\chi(\cM_{0,4})
- 15\chi(\cM_{0,3})^3\\
=& \chi(\Mbar_{0,5}) - 10 \chi(\Mbar_{0,3})\chi(\Mbar_{0,4})
+ 15\chi(\Mbar_{0,3})^3.
\end{split}
\end{equation*}

\subsection{Inversion formula and the open-closed duality}

The above inverse procedure of solving $\chi(\cM_{g,n})$ from $\{\chi(\Mbar_{h,l})\}_{(h,l)<(g,n)}$
is a special case of the results developed in
\S \ref{sec-inversion-Mob} and \S \ref{realization-duality}.
In fact,
one only needs to take
$\kappa=1$
and:
\be
F_{g,n} := \chi(\cM_{g,n}),
\qquad
2g-2+n>0.
\ee
in the Feynman rule \eqref{eq-FR-stablegraph}.
Now let $\bK=\bR$,
and let $\tilde f$ and $\tilde g$ be the two
real-valued functions on the set of stable graphs defined by \eqref{eq-tilde-f-g},
then:
\be
\tilde f(\Gamma) = \prod_{v\in V(\Gamma)} \chi(\cM_{g,n}),
\ee
and by \eqref{eq-chi-graph} and Theorem \ref{thm-tildeg-general} we have:
\be
\tilde g(\Gamma) = \prod_{v\in V(\Gamma)} \chi(\Mbar_{g,n}).
\ee
Then the generalized M\"obius inversion formula \eqref{eq-orb-Mob-inv} gives:
\begin{Theorem}
Given a pair $(g,n)$ with $2g-2+n>0$,
we have the following inversion formula for the graph sum \eqref{eq-chi-graph}:
\be
\chi(\cM_{g,n})=n!\cdot
\sum_{\Gamma \in \cG_{g, n}^{c}}
\frac{(-1)^{|E(\Gamma)|}}{|\Aut(\Gamma)|} \prod_{v \in V(\Gamma)}
\chi(\Mbar_{g_v,\val_v}).
\ee
\end{Theorem}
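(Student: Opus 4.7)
The plan is to obtain the statement as a direct specialization of Theorem \ref{thm-realization-inversion}. Concretely, I would set $\kappa = 1$ and take the family of formal variables to be $F_{g,n} := \chi(\cM_{g,n})$ for every $(g,n)$ with $2g-2+n>0$. Under this substitution, the $\bK$-valued Feynman weight of a stable graph becomes
\[
w_\Gamma = \prod_{v\in V(\Gamma)} \chi(\cM_{g_v,\val_v}),
\]
and therefore $\tilde f(\Gamma) = w_\Gamma$ as given by \eqref{eq-def-tildef}.

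Next I would identify the function $\tilde g$ with the Euler characteristic of the compactification. Using \eqref{eq-graphsum-tildeF} with $\kappa = 1$, the graph sum defining $\tF_{g,n}$ becomes
\[
\tF_{g,n} = n!\cdot \sum_{\Gamma\in\cG_{g,n}^c} \frac{1}{|\Aut(\Gamma)|}\prod_{v\in V(\Gamma)} \chi(\cM_{g_v,\val_v}),
\]
which is precisely the Bini-Harer graph-sum formula \eqref{eq-chi-graph}. Hence $\tF_{g,n} = \chi(\Mbar_{g,n})$. (Equivalently, this matches Theorem \ref{thm-tildeg-general}, which identifies $\tilde g(\Gamma)$ with $\prod_{v} \chi(\Mbar_{g_v,\val_v})$.)

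Finally, I would invoke the inversion formula \eqref{eq-real-Mob-thm} of Theorem \ref{thm-realization-inversion}. Plugging in the above identifications at $\kappa=1$ yields
\[
\chi(\cM_{g,n}) = F_{g,n} = n!\cdot \sum_{\Gamma\in\cG_{g,n}^c}\frac{(-1)^{|E(\Gamma)|}}{|\Aut(\Gamma)|}\prod_{v\in V(\Gamma)} \chi(\Mbar_{g_v,\val_v}),
\]
which is the claim. There is essentially no obstacle beyond checking that the substitution is legitimate: the formal variables $F_{g,n}$ in Theorem \ref{thm-realization-inversion} may be specialized to any field, here $\bR$, so no analytic or convergence issue arises, and the finiteness of $\cG_{g,n}^c$ ensures that all sums are finite. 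The only small subtlety worth noting is the compatibility of the graph sum in $\tF_{g,n}$ with the orbifold convention in \eqref{eq-chi-graph}, but this is exactly how the Bini-Harer formula is written, so no further argument is needed.
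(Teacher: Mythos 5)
Your proposal is correct and follows essentially the same route as the paper: the paper likewise sets $\kappa=1$ and $F_{g,n}=\chi(\cM_{g,n})$, identifies $\tilde g(\Gamma)=\prod_{v}\chi(\Mbar_{g_v,\val_v})$ via the Bini--Harer formula \eqref{eq-chi-graph} together with Theorem \ref{thm-tildeg-general}, and then applies the generalized M\"obius inversion. The only cosmetic difference is that the paper invokes \eqref{eq-orb-Mob-inv} directly rather than citing Theorem \ref{thm-realization-inversion}, but since that theorem is itself the realization of \eqref{eq-orb-Mob-inv} under this Feynman rule, the two formulations are interchangeable.
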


Now comparing this inversion formula with \eqref{eq-chi-graph},
and we see that the two graph sums are of exactly the same form.
The only differences are that we have switched the locations of
$\{\chi(\cM_{g,n})\}$ and $\{\chi(\Mbar_{g,n})\}$ and changed the sign of the propagator.
This gives a new example of the open-closed duality.

One can also represent this open-closed duality in terms of formal Gaussian integrals.
The following is known in \cite{bh}:
\begin{equation*}
\begin{split}
&\exp\bigg(
\sum_{2g-2+n>0}
\lambda^{2g-2} \chi(\Mbar_{g,n})\cdot \frac{z^n}{n!}
\bigg)\\
= &
\frac{1}{(2\pi \lambda^2 )^{1/2}}
\int \exp\bigg(
\sum_{2g-2+ n>0}
\lambda^{2g-2} \chi(\cM_{g,n})\cdot \frac{x^n}{n!}
-\frac{\lambda^{-2}}{2} (x-z)^2
\bigg) dx.
\end{split}
\end{equation*}
And by the discussions in \S \ref{sec-integral} we know that the inverse of this formula
is the following formal integral
(regarded as a equality for formal power series):
\begin{equation*}
\begin{split}
&\exp\bigg(
\sum_{2g-2+n>0}
\lambda^{2g-2} \chi(\cM_{g,n})\cdot \frac{z^n}{n!}
\bigg)\\
= &
-\frac{1}{(2\pi \lambda^2 )^{1/2}}
\int \exp\bigg(
\sum_{2g-2+ n>0}
\lambda^{2g-2} \chi(\Mbar_{g,n})\cdot \frac{x^n}{n!}
+\frac{\lambda^{-2}}{2} (x-z)^2
\bigg) dx.
\end{split}
\end{equation*}

\vspace{.2in}
{\bf Acknowledgements}.
The second named author is partly supported by NSFC (No.~12371254, No.~11890662, No.~12061131014).

\begin{appendices}

\section{Examples of the Partial Orderings}
\label{sec-app}

In this appendix,
we given some examples of the partial orderings on the set $\cG_{g,n}^c$
for small $g$ and $n$.

First consider $g=0$,
and in such cases the graphs are stable trees.
For simplicity we will use an (unmarked) solid dot to represent a vertex of genus zero.

\begin{Example}
Consider $(g,n)=(0,3)$.
This case is trivial since $\cG_{0,3}^c$ consists of only one element:
\begin{equation*}
\begin{tikzpicture}[scale=1.1]
\draw [fill] (0,0) circle [radius=0.05];
\draw (-0.4,-0.3)--(0,0);
\draw (0.4,-0.3)--(0,0);
\draw (0,0.4)--(0,0);
\end{tikzpicture}
\end{equation*}
\end{Example}

\begin{Example}
Consider $(g,n)=(0,4)$.
In this case $\cG_{0,4}^c$ is a totally ordered set consisting of two elements,
and we have:
\begin{equation*}
\begin{tikzpicture}[scale=1.1]
\node [align=center,align=center] at (0.8,0) {$>$};
\draw [fill] (0,0) circle [radius=0.05];
\draw (-0.4,0.3)--(0.4,-0.3);
\draw (0.4,0.3)--(-0.4,-0.3);
\draw [fill] (1.6,0) circle [radius=0.05];
\draw [fill] (2,0) circle [radius=0.05];
\draw (1.6,0)--(1.3,0.3);
\draw (1.6,0)--(1.3,-0.3);
\draw (2.3,0.3)--(2,0);
\draw (2.3,-0.3)--(2,0);
\draw (1.6,0)--(2,0);
\end{tikzpicture}
\end{equation*}
\end{Example}

\begin{Example}
Consider $(g,n)=(0,5)$.
In this case In this case $\cG_{0,5}^c$ is a totally ordered set consisting of three elements,
and we have:
\begin{equation*}
\begin{tikzpicture}[scale=1.1]
\draw [fill] (0,0) circle [radius=0.05];
\draw (-0.45,0.1)--(0,0);
\draw (0.45,0.1)--(0,0);
\draw (0.4,-0.3)--(0,0);
\draw (-0.4,-0.3)--(0,0);
\draw (0,0.4)--(0,0);
\node [align=center,align=center] at (0.8,0) {$>$};
\draw [fill] (1.6,0) circle [radius=0.05];
\draw [fill] (2,0) circle [radius=0.05];
\draw (1.6,0)--(1.3,0.3);
\draw (1.6,0)--(1.3,-0.3);
\draw (2.3,0.3)--(2,0);
\draw (2.3,-0.3)--(2,0);
\draw (1.6,0)--(2.4,0);
\node [align=center,align=center] at (2.8,0) {$>$};
\draw [fill] (6.6-3,0) circle [radius=0.05];
\draw [fill] (7-3,0) circle [radius=0.05];
\draw [fill] (7.4-3,0) circle [radius=0.05];
\draw (7-3,0)--(7-3,0.4);
\draw (6.3-3,0.3)--(6.6-3,0);
\draw (6.3-3,-0.3)--(6.6-3,0);
\draw (7.7-3,0.3)--(7.4-3,0);
\draw (7.7-3,-0.3)--(7.4-3,0);
\draw (6.6-3,0)--(7.4-3,0);
\end{tikzpicture}
\end{equation*}
\end{Example}

\begin{Example}
Consider $(g,n)=(0,6)$.
For simplicity we will not write down all the relations,
but only list all pairs of adjacent graphs,
i.e., graphs $\Gamma,\Gamma'$ with $\Gamma>\Gamma'$ such that the open interval $(\Gamma',\Gamma)$ is empty.
We have:
\begin{equation*}
\begin{tikzpicture}[scale=1.05]
\draw [fill] (0,0) circle [radius=0.05];
\draw (-0.4,0)--(0.4,0);
\draw (-0.3,0.3)--(0.3,-0.3);
\draw (0.3,0.3)--(-0.3,-0.3);
\node [align=center,align=center] at (0.8,0) {$>$};
\draw [fill] (1.6,0) circle [radius=0.05];
\draw [fill] (2,0) circle [radius=0.05];
\draw (1.6,0)--(1.3,0.2);
\draw (1.6,0)--(1.3,-0.2);
\draw (2.3,0.2)--(2,0);
\draw (2.3,-0.2)--(2,0);
\draw (1.2,0)--(2.4,0);
\draw [fill] (0+4,0) circle [radius=0.05];
\draw (-0.4+4,0)--(0.4+4,0);
\draw (-0.3+4,0.3)--(0.3+4,-0.3);
\draw (0.3+4,0.3)--(-0.3+4,-0.3);
\node [align=center,align=center] at (0.8+4,0) {$>$};
\draw [fill] (1.6+4,0) circle [radius=0.05];
\draw [fill] (2+4,0) circle [radius=0.05];
\draw (1.6+4,0)--(1.3+4,0.2);
\draw (1.6+4,0)--(1.3+4,-0.2);
\draw (2.3+4,0.2)--(2+4,0);
\draw (2.3+4,-0.2)--(2+4,0);
\draw (1.6+4,0)--(2+4,0);
\draw (6,-0.3)--(6,0.3);
\draw [fill] (1.6+6.4,0) circle [radius=0.05];
\draw [fill] (2+6.4,0) circle [radius=0.05];
\draw (1.6+6.4,0)--(1.3+6.4,0.2);
\draw (1.6+6.4,0)--(1.3+6.4,-0.2);
\draw (2.3+6.4,0.2)--(2+6.4,0);
\draw (2.3+6.4,-0.2)--(2+6.4,0);
\draw (1.2+6.4,0)--(2.4+6.4,0);
\node [align=center,align=center] at (9.2,0) {$>$};
\draw [fill] (6.6+3.4,0) circle [radius=0.05];
\draw [fill] (7+3.4,0) circle [radius=0.05];
\draw [fill] (7.4+3.4,0) circle [radius=0.05];
\draw (7+3.4,0)--(7+3.4,0.3);
\draw (6.3+3.4,0.2)--(6.6+3.4,0);
\draw (6.3+3.4,-0.2)--(6.6+3.4,0);
\draw (7.7+3.4,0.2)--(7.4+3.4,0);
\draw (7.7+3.4,-0.2)--(7.4+3.4,0);
\draw (6.6+3.4,0)--(7.8+3.4,0);
\end{tikzpicture}
\end{equation*}
\begin{equation*}
\begin{tikzpicture}[scale=0.9]
\draw [fill] (1.6+6.4-4.8,0) circle [radius=0.055];
\draw [fill] (2+6.4-4.8,0) circle [radius=0.055];
\draw (1.6+6.4-4.8,0)--(1.3+6.4-4.8,0.2);
\draw (1.6+6.4-4.8,0)--(1.3+6.4-4.8,-0.2);
\draw (2.3+6.4-4.8,0.2)--(2+6.4-4.8,0);
\draw (2.3+6.4-4.8,-0.2)--(2+6.4-4.8,0);
\draw (1.2+6.8-4.8,0)--(2.4+6-4.8,0);
\draw (2.4+6-4.8,-0.3)--(2.4+6-4.8,0.3);
\node [align=center,align=center] at (9.2-4.8,0) {$>$};
\draw [fill] (6.6+3.4-4.8,0) circle [radius=0.055];
\draw [fill] (7+3.4-4.8,0) circle [radius=0.055];
\draw [fill] (7.4+3.4-4.8,0) circle [radius=0.055];
\draw (7+3.4-4.8,-0.3)--(7+3.4-4.8,0.3);
\draw (6.3+3.4-4.8,0.2)--(6.6+3.4-4.8,0);
\draw (6.3+3.4-4.8,-0.2)--(6.6+3.4-4.8,0);
\draw (7.7+3.4-4.8,0.2)--(7.4+3.4-4.8,0);
\draw (7.7+3.4-4.8,-0.2)--(7.4+3.4-4.8,0);
\draw (6.6+3.4-4.8,0)--(7.8+3-4.8,0);
\draw [fill] (1.6+6.4,0) circle [radius=0.055];
\draw [fill] (2+6.4,0) circle [radius=0.055];
\draw (1.6+6.4,0)--(1.3+6.4,0.2);
\draw (1.6+6.4,0)--(1.3+6.4,-0.2);
\draw (2.3+6.4,0.2)--(2+6.4,0);
\draw (2.3+6.4,-0.2)--(2+6.4,0);
\draw (1.2+6.8,0)--(2.4+6,0);
\draw (8.4,-0.3)--(8.4,0.3);
\node [align=center,align=center] at (9.2,0) {$>$};
\draw [fill] (6.6+3.4,0) circle [radius=0.055];
\draw [fill] (7+3.4,0) circle [radius=0.055];
\draw [fill] (7.4+3.4,0) circle [radius=0.055];
\draw (7+3.4,0)--(7+3.4,0.3);
\draw (6.3+3.4,0.2)--(6.6+3.4,0);
\draw (6.3+3.4,-0.2)--(6.6+3.4,0);
\draw (7.7+3.4,0.2)--(7.4+3.4,0);
\draw (7.7+3.4,-0.2)--(7.4+3.4,0);
\draw (6.6+3.4,0)--(7.8+3.4,0);
\draw [fill] (6.6+3.4+2.8,0) circle [radius=0.055];
\draw [fill] (7+3.4+2.8,0) circle [radius=0.055];
\draw [fill] (7.4+3.4+2.8,0) circle [radius=0.055];
\draw (7+3.4+2.8,0)--(7+3.4+2.8,0.3);
\draw (6.3+3.4+2.8,0.2)--(6.6+3.4+2.8,0);
\draw (6.3+3.4+2.8,-0.2)--(6.6+3.4+2.8,0);
\draw (7.7+3.4+2.8,0.2)--(7.4+3.4+2.8,0);
\draw (7.7+3.4+2.8,-0.2)--(7.4+3.4+2.8,0);
\draw (6.6+3.4+2.8,0)--(7.8+3.4+2.8,0);
\node [align=center,align=center] at (7.4+3.4+3.6,0) {$>$};
\draw [fill] (15.2,0) circle [radius=0.055];
\draw [fill] (15.6,0) circle [radius=0.055];
\draw [fill] (16,0) circle [radius=0.055];
\draw [fill] (16.4,0) circle [radius=0.055];
\draw (15.2,0)--(16.4,0);
\draw (14.9,-0.2)--(15.2,0);
\draw (14.9,0.2)--(15.2,0);
\draw (16.7,-0.2)--(16.4,0);
\draw (16.7,0.2)--(16.4,0);
\draw (15.6,0)--(15.6,0.3);
\draw (16,0)--(16,0.3);
\end{tikzpicture}
\end{equation*}
\begin{equation*}
\begin{tikzpicture}[scale=0.95]
\draw [fill] (6.6+3.4+2.8,0) circle [radius=0.055];
\draw [fill] (7+3.4+2.8,0) circle [radius=0.055];
\draw [fill] (7.4+3.4+2.8,0) circle [radius=0.055];
\draw (7+3.4+2.8,-0.3)--(7+3.4+2.8,0.3);
\draw (6.3+3.4+2.8,0.2)--(6.6+3.4+2.8,0);
\draw (6.3+3.4+2.8,-0.2)--(6.6+3.4+2.8,0);
\draw (7.7+3.4+2.8,0.2)--(7.4+3.4+2.8,0);
\draw (7.7+3.4+2.8,-0.2)--(7.4+3.4+2.8,0);
\draw (6.6+3.4+2.8,0)--(7.8+3+2.8,0);
\node [align=center,align=center] at (7.4+3.4+3.6,0) {$>$};
\draw [fill] (15.2,0) circle [radius=0.055];
\draw [fill] (15.6,0) circle [radius=0.055];
\draw [fill] (16,0) circle [radius=0.055];
\draw [fill] (16.4,0) circle [radius=0.055];
\draw (15.2,0)--(16.4,0);
\draw (14.9,-0.2)--(15.2,0);
\draw (14.9,0.2)--(15.2,0);
\draw (16.7,-0.2)--(16.4,0);
\draw (16.7,0.2)--(16.4,0);
\draw (15.6,0)--(15.6,0.3);
\draw (16,0)--(16,0.3);
\draw [fill] (18.4,0) circle [radius=0.055];
\draw [fill] (7+3.4+2.8+5.6,0) circle [radius=0.055];
\draw [fill] (7.4+3.4+2.8+5.6,0) circle [radius=0.055];
\draw (7+3.4+2.8+5.6,-0.3)--(7+3.4+2.8+5.6,0.3);
\draw (6.3+3.4+2.8+5.6,0.2)--(6.6+3.4+2.8+5.6,0);
\draw (6.3+3.4+2.8+5.6,-0.2)--(6.6+3.4+2.8+5.6,0);
\draw (7.7+3.4+2.8+5.6,0.2)--(7.4+3.4+2.8+5.6,0);
\draw (7.7+3.4+2.8+5.6,-0.2)--(7.4+3.4+2.8+5.6,0);
\draw (6.6+3.4+2.8+5.6,0)--(7.8+3+2.8+5.6,0);
\node [align=center,align=center] at (20,0) {$>$};
\draw [fill] (20.8,0) circle [radius=0.055];
\draw [fill] (21.2,0) circle [radius=0.055];
\draw [fill] (21.6,0) circle [radius=0.055];
\draw [fill] (21.2,0.3) circle [radius=0.055];
\draw (21.2,0)--(21.2,0.3);
\draw (20.5,0.2)--(20.8,0);
\draw (20.5,-0.2)--(20.8,0);
\draw (21.9,0.2)--(21.6,0);
\draw (21.9,-0.2)--(21.6,0);
\draw (20.8,0)--(21.6,0);
\draw (20.9,0.45)--(21.2,0.3);
\draw (21.5,0.45)--(21.2,0.3);
\end{tikzpicture}
\end{equation*}
Or equivalently,
\begin{equation*}
\begin{tikzpicture}[scale=1.25]
\draw [fill] (0,0) circle [radius=0.045];
\draw (-0.4,0)--(0.4,0);
\draw (-0.3,0.3)--(0.3,-0.3);
\draw (0.3,0.3)--(-0.3,-0.3);
\draw [->] (0.6,0.2)--(1,0.4);
\draw [->] (0.6,-0.2)--(1,-0.4);
\draw [fill] (1.6,0+0.6) circle [radius=0.045];
\draw [fill] (2,0+0.6) circle [radius=0.045];
\draw (1.6,0+0.6)--(1.3,0.2+0.6);
\draw (1.6,0+0.6)--(1.3,-0.2+0.6);
\draw (2.3,0.2+0.6)--(2,0+0.6);
\draw (2.3,-0.2+0.6)--(2,0+0.6);
\draw (1.2,0+0.6)--(2.4,0+0.6);
\draw [->] (2.6,0.6)--(3,0.6);
\draw [->] (2.6,-0.6)--(3,-0.6);
\draw [->] (2.6,-0.2)--(3,0.2);
\draw [fill] (1.6,0-0.6) circle [radius=0.045];
\draw [fill] (2,0-0.6) circle [radius=0.045];
\draw (1.6,0-0.6)--(1.3,0.2-0.6);
\draw (1.6,0-0.6)--(1.3,-0.2-0.6);
\draw (2.3,0.2-0.6)--(2,0-0.6);
\draw (2.3,-0.2-0.6)--(2,0-0.6);
\draw (1.6,0-0.6)--(2,0-0.6);
\draw (2,0.3-0.6)--(2,-0.3-0.6);
\draw [fill] (3.6,0+0.6) circle [radius=0.045];
\draw [fill] (4,0+0.6) circle [radius=0.045];
\draw [fill] (4.4,0+0.6) circle [radius=0.045];
\draw (4,0+0.6)--(4,0.3+0.6);
\draw (3.3,0.2+0.6)--(3.6,0+0.6);
\draw (3.3,-0.2+0.6)--(3.6,0+0.6);
\draw (4.7,0.2+0.6)--(4.4,0+0.6);
\draw (4.7,-0.2+0.6)--(4.4,0+0.6);
\draw (3.6,0+0.6)--(4.8,0+0.6);
\draw [fill] (3.6,0-0.6) circle [radius=0.045];
\draw [fill] (4,0-0.6) circle [radius=0.045];
\draw [fill] (4.4,0-0.6) circle [radius=0.045];
\draw (4,-0.3-0.6)--(4,0.3-0.6);
\draw (3.3,0.2-0.6)--(3.6,0-0.6);
\draw (3.3,-0.2-0.6)--(3.6,0-0.6);
\draw (4.7,0.2-0.6)--(4.4,0-0.6);
\draw (4.7,-0.2-0.6)--(4.4,0-0.6);
\draw (3.6,0-0.6)--(4.4,0-0.6);
\draw [->] (5,0.6)--(5.4,0.6);
\draw [->] (5,-0.6)--(5.4,-0.6);
\draw [->] (5,-0.2)--(5.4,0.2);
\draw [fill] (3.6+2.4,0+0.6) circle [radius=0.045];
\draw [fill] (4+2.4,0+0.6) circle [radius=0.045];
\draw [fill] (4.4+2.4,0+0.6) circle [radius=0.045];
\draw [fill] (4.4+2.8,0+0.6) circle [radius=0.045];
\draw (4+2.4,0+0.6)--(4+2.4,0.3+0.6);
\draw (4+2.8,0+0.6)--(4+2.8,0.3+0.6);
\draw (3.3+2.4,0.2+0.6)--(3.6+2.4,0+0.6);
\draw (3.3+2.4,-0.2+0.6)--(3.6+2.4,0+0.6);
\draw (4.7+2.8,0.2+0.6)--(4.4+2.8,0+0.6);
\draw (4.7+2.8,-0.2+0.6)--(4.4+2.8,0+0.6);
\draw (3.6+2.4,0+0.6)--(4.8+2.4,0+0.6);
\draw [fill] (20.8-14.8,0-0.7) circle [radius=0.045];
\draw [fill] (21.2-14.8,0-0.7) circle [radius=0.045];
\draw [fill] (21.6-14.8,0-0.7) circle [radius=0.045];
\draw [fill] (21.2-14.8,0.3-0.7) circle [radius=0.045];
\draw (21.2-14.8,0-0.7)--(21.2-14.8,0.3-0.7);
\draw (20.5-14.8,0.2-0.7)--(20.8-14.8,0-0.7);
\draw (20.5-14.8,-0.2-0.7)--(20.8-14.8,0-0.7);
\draw (21.9-14.8,0.2-0.7)--(21.6-14.8,0-0.7);
\draw (21.9-14.8,-0.2-0.7)--(21.6-14.8,0-0.7);
\draw (20.8-14.8,0-0.7)--(21.6-14.8,0-0.7);
\draw (20.9-14.8,0.45-0.7)--(21.2-14.8,0.3-0.7);
\draw (21.5-14.8,0.45-0.7)--(21.2-14.8,0.3-0.7);
\end{tikzpicture}
\end{equation*}
where an arrow from $\Gamma$ to $\Gamma'$ means $\Gamma>\Gamma'$.
\end{Example}

\begin{Example}
Consider $(g,n)=(0,7)$.
In this case we have:
\begin{equation*}
\begin{tikzpicture}[scale=1.25]
\draw [fill] (0,0) circle [radius=0.045];
\draw (-0.4,0)--(0.4,0);
\draw (-0.3,0.25)--(0.3,-0.25);
\draw (0.3,0.25)--(-0.3,-0.25);
\draw (0,0)--(0,0.3);
\draw [->] (-0.8,-0.5)--(-1.5,-1);
\draw [->] (0.8,-0.5)--(1.5,-1);
\draw [fill] (-2,-1.5) circle [radius=0.045];
\draw [fill] (-2.4,-1.5) circle [radius=0.045];
\draw (-1.6,-1.5)--(-2.4,-1.5);
\draw (-2,-1.5)--(-1.7,-0.8-0.5);
\draw (-2,-1.5)--(-1.7,-1.2-0.5);
\draw (-2.4,-1.5)--(-2.7,-0.8-0.5);
\draw (-2.4,-1.5)--(-2.7,-1.2-0.5);
\draw (-2,-0.7-0.5)--(-2,-1.3-0.5);
\draw [fill] (2,-1-0.5) circle [radius=0.045];
\draw [fill] (2.4,-1-0.5) circle [radius=0.045];
\draw (1.6,-1-0.5)--(2.4,-1-0.5);
\draw (2,-1-0.5)--(1.7,-0.8-0.5);
\draw (2,-1-0.5)--(1.7,-1.2-0.5);
\draw (2.4,-1-0.5)--(2.7,-0.8-0.5);
\draw (2.4,-1-0.5)--(2.7,-1.2-0.5);
\draw (2.4,-0.7-0.5)--(2.4,-1.3-0.5);
\draw [->] (-3,-2)--(-3.5,-2.5);
\draw [->] (3,-2)--(3.5,-2.5);
\draw [->] (-2,-2)--(-1.5,-2.5);
\draw [->] (2,-2)--(1.5,-2.5);
\draw [->] (-1.2,-2)--(0.8,-2.5);
\draw [->] (1.2,-2)--(-0.8,-2.5);
\draw [fill] (-1.2,-3) circle [radius=0.045];
\draw [fill] (-1.6,-3) circle [radius=0.045];
\draw [fill] (-0.8,-3) circle [radius=0.045];
\draw (-4+2.4,-3)--(-4.3+2.4,-3.2);
\draw (-4+2.4,-3)--(-4.3+2.4,-2.8);
\draw (-3.2+2.4,-3)--(-2.9+2.4,-3.2);
\draw (-3.2+2.4,-3)--(-2.9+2.4,-2.8);
\draw (-4+2.4,-3)--(-4.3+2.4,-3.2);
\draw (-4+2.4,-3)--(-2.8+2.4,-3);
\draw (-3.6+2.4,-3.3)--(-3.6+2.4,-2.7);
\draw [fill] (1.2,-3) circle [radius=0.045];
\draw [fill] (1.6,-3) circle [radius=0.045];
\draw [fill] (0.8,-3) circle [radius=0.045];
\draw (-4+2.4+2.4,-3)--(-4.3+2.4+2.4,-3.2);
\draw (-4+2.4+2.4,-3)--(-4.3+2.4+2.4,-2.8);
\draw (-3.2+2.4+2.4,-3)--(-2.9+2.4+2.4,-3.2);
\draw (-3.2+2.4+2.4,-3)--(-2.9+2.4+2.4,-2.8);
\draw (-4+2.4+2.4,-3)--(-4.3+2.4+2.4,-3.2);
\draw (-4+2.4+2.4,-3)--(-3.2+2.4+2.4,-3);
\draw (-3.6+2.4+2.4,-3)--(-3.6+2.4+2.4,-2.7);
\draw (1.6,-3.3)--(1.6,-2.7);
\draw [fill] (-4,-3) circle [radius=0.045];
\draw [fill] (-3.2,-3) circle [radius=0.045];
\draw [fill] (-3.6,-3) circle [radius=0.045];
\draw (-4,-3)--(-4.3,-3.2);
\draw (-4,-3)--(-4.3,-2.8);
\draw (-3.2,-3)--(-2.9,-3.2);
\draw (-3.2,-3)--(-2.9,-2.8);
\draw (-4,-3)--(-4.3,-3.2);
\draw (-4,-3)--(-3.2,-3);
\draw (-3.6,-3)--(-3.6,-2.7);
\draw (-3.6,-3)--(-3.4,-3.3);
\draw (-3.6,-3)--(-3.8,-3.3);
\draw [fill] (4,-3) circle [radius=0.045];
\draw [fill] (3.6,-3) circle [radius=0.045];
\draw [fill] (3.2,-3) circle [radius=0.045];
\draw (4,-3)--(4.3,-3.2);
\draw (4,-3)--(4.3,-2.8);
\draw (3.2,-3)--(2.9,-3.2);
\draw (3.2,-3)--(2.9,-2.8);
\draw (4,-3)--(4.3,-3.2);
\draw (4.4,-3)--(2.8,-3);
\draw (3.6,-3)--(3.6,-2.7);
\draw [fill] (-4,-5) circle [radius=0.045];
\draw [fill] (-3.6,-5) circle [radius=0.045];
\draw [fill] (-3.2,-5) circle [radius=0.045];
\draw [fill] (-3.6,-4.7) circle [radius=0.045];
\draw (-4,-5)--(-3.2,-5);
\draw (-3.6,-5.3)--(-3.6,-4.7);
\draw (-4,-5)--(-4.3,-5.2);
\draw (-4,-5)--(-4.3,-4.8);
\draw (-3.2,-5)--(-2.9,-5.2);
\draw (-3.2,-5)--(-2.9,-4.8);
\draw (-3.6,-4.7)--(-3.9,-4.55);
\draw (-3.6,-4.7)--(-3.3,-4.55);
\draw [fill] (-1.2,-5) circle [radius=0.045];
\draw [fill] (-1.2,-4.7) circle [radius=0.045];
\draw [fill] (-1.6,-5) circle [radius=0.045];
\draw [fill] (-0.8,-5) circle [radius=0.045];
\draw (-4+2.4,-5)--(-3.2+2.4,-5);
\draw (-3.6+2.4,-5)--(-3.6+2.4,-4.4);
\draw (-4+2.4,-5)--(-4.3+2.4,-5.2);
\draw (-4+2.4,-5)--(-4.3+2.4,-4.8);
\draw (-3.2+2.4,-5)--(-2.9+2.4,-5.2);
\draw (-3.2+2.4,-5)--(-2.9+2.4,-4.8);
\draw (-3.6+2.4,-4.7)--(-3.9+2.4,-4.55);
\draw (-3.6+2.4,-4.7)--(-3.3+2.4,-4.55);
\draw [fill] (0.6,-5) circle [radius=0.045];
\draw [fill] (1,-5) circle [radius=0.045];
\draw [fill] (1.4,-5) circle [radius=0.045];
\draw [fill] (1.8,-5) circle [radius=0.045];
\draw (0.3,-5.2)--(0.6,-5);
\draw (0.3,-4.8)--(0.6,-5);
\draw (2.1,-5.2)--(1.8,-5);
\draw (2.1,-4.8)--(1.8,-5);
\draw (1.8,-5)--(0.6,-5);
\draw (1.4,-5)--(1.4,-4.7);
\draw (1,-5.3)--(1,-4.7);
\draw [fill] (3,-5) circle [radius=0.045];
\draw [fill] (3.4,-5) circle [radius=0.045];
\draw [fill] (3.8,-5) circle [radius=0.045];
\draw [fill] (4.2,-5) circle [radius=0.045];
\draw (0.3+2.4,-5.2)--(0.6+2.4,-5);
\draw (0.3+2.4,-4.8)--(0.6+2.4,-5);
\draw (2.1+2.4,-5.2)--(1.8+2.4,-5);
\draw (2.1+2.4,-4.8)--(1.8+2.4,-5);
\draw (2.2+2.4,-5)--(0.6+2.4,-5);
\draw (1.4+2.4,-5)--(1.4+2.4,-4.7);
\draw (1+2.4,-5)--(1+2.4,-4.7);
\draw [->] (-3.6,-3.5)--(-3.6,-4.2);
\draw [->] (3.6,-3.5)--(3.6,-4.2);
\draw [->] (-1.2,-3.5)--(-1.2,-4.2);
\draw [->] (1.2,-3.5)--(1.2,-4.2);
\draw [->] (-3.2,-3.5)--(-1.6,-4.2);
\draw [->] (-2.8,-3.5)--(0.2,-4.2);
\draw [->] (-0.8,-3.5)--(0.8,-4.2);
\draw [->] (-0.4,-3.5)--(2.6,-4.2);
\draw [->] (1.6,-3.5)--(3.2,-4.2);
\draw [fill] (-2.6,-7) circle [radius=0.045];
\draw [fill] (-2.2,-7) circle [radius=0.045];
\draw [fill] (-1.8,-7) circle [radius=0.045];
\draw [fill] (-1.4,-7) circle [radius=0.045];
\draw [fill] (-1.8,-6.7) circle [radius=0.045];
\draw (-2.6,-7)--(-1.4,-7);
\draw (-2.6,-7)--(-2.9,-7.2);
\draw (-2.6,-7)--(-2.9,-6.8);
\draw (-1.4,-7)--(-1.1,-6.8);
\draw (-1.4,-7)--(-1.1,-7.2);
\draw (-2.2,-7)--(-2.2,-6.7);
\draw (-1.8,-7)--(-1.8,-6.7);
\draw (-1.8,-6.7)--(-2.1,-6.55);
\draw (-1.8,-6.7)--(-1.5,-6.55);
\draw [fill] (2,-7) circle [radius=0.045];
\draw [fill] (1.6,-7) circle [radius=0.045];
\draw [fill] (1.2,-7) circle [radius=0.045];
\draw [fill] (2.4,-7) circle [radius=0.045];
\draw [fill] (2.8,-7) circle [radius=0.045];
\draw (1.2,-7)--(0.9,-7.2);
\draw (1.2,-7)--(0.9,-6.8);
\draw (2.8,-7)--(3.1,-6.8);
\draw (2.8,-7)--(3.1,-7.2);
\draw (1.2,-7)--(2.8,-7);
\draw (1.6,-7)--(1.6,-6.7);
\draw (2,-7)--(2,-6.7);
\draw (2.4,-7)--(2.4,-6.7);
\draw [->] (-3.4,-5.5)--(-2.8,-6.2);
\draw [->] (3.4,-5.5)--(2.6,-6.2);
\draw [->] (-1.6,-5.5)--(-1.8,-6.2);
\draw [->] (1.6,-5.5)--(1.8,-6.2);
\draw [->] (0.4,-5.5)--(-0.8,-6.2);
\end{tikzpicture}
\end{equation*}
where an arrow from $\Gamma$ to $\Gamma'$ means $\Gamma>\Gamma'$.
\end{Example}

Now let us consider the cases with $g=1$.

\begin{Example}
Consider $(g,n)=(1,1)$.
The set $\cG_{1,1}^c$ consists of two elements,
and we have:
\begin{equation*}
\begin{tikzpicture}
\draw (1,0) circle [radius=0.2];
\draw (1.2,0)--(1.5,0);
\node [align=center,align=center] at (1,0) {$1$};
\node [align=center,align=center] at (2,0) {$>$};
\draw (1+1.8,0) circle [radius=0.2];
\draw (1.2+1.8,0)--(1.5+1.8,0);
\draw (0.84+1.8,0.1) .. controls (0.5+1.8,0.2) and (0.5+1.8,-0.2) ..  (0.84+1.8,-0.1);
\node [align=center,align=center] at (1+1.8,0) {$0$};
\end{tikzpicture}
\end{equation*}
\end{Example}

\begin{Example}
Consider $(g,n)=(1,2)$.
The set $\cG_{1,2}^c$ consists of five elements,
and we have:
\begin{equation*}
\begin{tikzpicture}
\draw (1,0) circle [radius=0.2];
\draw (1.2,0)--(1.5,0);
\draw (0.5,0)--(0.8,0);
\draw [->] (2,0.2)--(2.8,0.4);
\draw [->] (2,-0.2)--(2.8,-0.4);
\node [align=center,align=center] at (1,0) {$1$};
\draw (1+1.8+1,0+0.6) circle [radius=0.2];
\draw (1.17+1.8+1,0.1+0.6)--(1.4+1.8+1,0.15+0.6);
\draw (1.17+1.8+1,-0.1+0.6)--(1.4+1.8+1,-0.15+0.6);
\draw (0.84+1.8+1,0.1+0.6) .. controls (0.5+1.8+1,0.2+0.6) and (0.5+1.8+1,-0.2+0.6) ..  (0.84+1.8+1,-0.1+0.6);
\node [align=center,align=center] at (1+1.8+1,0+0.6) {$0$};
\draw (1+3.9-0.8,0-0.6) circle [radius=0.2];
\draw (0.4+3.9-0.8,0-0.6) circle [radius=0.2];
\draw (1.17+3.9-0.8,0.1-0.6)--(1.4+3.9-0.8,0.15-0.6);
\draw (1.17+3.9-0.8,-0.1-0.6)--(1.4+3.9-0.8,-0.15-0.6);
\draw (0.6+3.9-0.8,0-0.6)--(0.8+3.9-0.8,0-0.6);
\draw [->] (4.9,0.6)--(5.6,0.6);
\draw [->] (4.9,0.3)--(5.6,-0.3);
\draw [->] (4.9,-0.6)--(5.6,-0.6);
\node [align=center,align=center] at (1+3.9-0.8,0-0.6) {$0$};
\node [align=center,align=center] at (0.4+3.9-0.8,0-0.6) {$1$};
\draw (1+6.2,0-0.6) circle [radius=0.2];
\draw (0.4+6.2,0-0.6) circle [radius=0.2];
\draw (1.17+6.2,0.1-0.6)--(1.4+6.2,0.15-0.6);
\draw (1.17+6.2,-0.1-0.6)--(1.4+6.2,-0.15-0.6);
\draw (0.6+6.2,0-0.6)--(0.8+6.2,0-0.6);
\node [align=center,align=center] at (1+6.3-0.1,0-0.6) {$0$};
\node [align=center,align=center] at (0.4+6.3-0.1,0-0.6) {$0$};
\draw (0.24+6.3-0.1,0.1-0.6) .. controls (-0.1+6.3-0.1,0.2-0.6) and (-0.1+6.3-0.1,-0.2-0.6) ..  (0.24+6.3-0.1,-0.1-0.6);
\draw (1+8.1-0.1-2.4,0+0.6) circle [radius=0.2];
\draw (0.5+8.1-0.1-2.4,0+0.6)--(0.8+8.1-0.1-2.4,0+0.6);
\draw (1.18+8.1-0.1-2.4,0.07+0.6)--(1.42+8.1-0.1-2.4,0.07+0.6);
\draw (1.18+8.1-0.1-2.4,-0.07+0.6)--(1.42+8.1-0.1-2.4,-0.07+0.6);
\draw (1.8+8.1-0.1-2.4,-0+0.6)--(2.1+8.1-0.1-2.4,0+0.6);
\draw (1.6+8.1-0.1-2.4,0+0.6) circle [radius=0.2];
\node [align=center,align=center] at (1+8.1-0.1-2.4,0+0.6) {$0$};
\node [align=center,align=center] at (1.6+8.1-0.1-2.4,0+0.6) {$0$};
\end{tikzpicture}
\end{equation*}
\end{Example}

\begin{Example}
Consider $(g,n)=(1,3)$.
We have:
\begin{equation*}
\begin{tikzpicture}
\draw (1.6-0.4,0) circle [radius=0.2];
\draw (1.1-0.4,0)--(1.4-0.4,0);
\draw (1.76-0.4,0.1)--(2-0.4,0.15);
\draw (1.76-0.4,-0.1)--(2-0.4,-0.15);
\node [align=center,align=center] at (1.6-0.4,0) {$1$};
\draw [->](2,0)--(2.7,0);
\draw [->](2,0.3)--(2.7,0.6);
\draw [->](2,-0.3)--(2.7,-0.6);
\draw (1+2.6,0+1) circle [radius=0.2];
\draw (1.17+2.6,0.1+1)--(1.4+2.6,0.15+1);
\draw (1.17+2.6,-0.1+1)--(1.4+2.6,-0.15+1);
\draw (1.2+2.6,0+1)--(1.5+2.6,0+1);
\draw (0.84+2.6,0.1+1) .. controls (0.5+2.6,0.2+1) and (0.5+2.6,-0.2+1) ..  (0.84+2.6,-0.1+1);
\node [align=center,align=center] at (1+2.6,0+1) {$0$};
\draw (1+5-1.8,0) circle [radius=0.2];
\draw (0.4+5-1.8,0) circle [radius=0.2];
\draw (1.17+5-1.8,0.1)--(1.4+5-1.8,0.15);
\draw (1.17+5-1.8,-0.1)--(1.4+5-1.8,-0.15);
\draw (-0.1+5-1.8,0)--(0.2+5-1.8,0);
\draw (0.6+5-1.8,0)--(0.8+5-1.8,0);
\node [align=center,align=center] at (1+5-1.8,0) {$0$};
\node [align=center,align=center] at (0.4+5-1.8,0) {$1$};
\draw (1+3,0-1) circle [radius=0.2];
\draw (0.4+3,0-1) circle [radius=0.2];
\draw (1.17+3,0.1-1)--(1.4+3,0.15-1);
\draw (1.17+3,-0.1-1)--(1.4+3,-0.15-1);
\draw (0.6+3,0-1)--(0.8+3,0-1);
\draw (1.2+3,0-1)--(1.5+3,0-1);
\node [align=center,align=center] at (1+3,0-1) {$0$};
\node [align=center,align=center] at (0.4+3,0-1) {$1$};
\draw [->](5.1,1.3)--(5.9,1.4);
\draw [->](5.1,-1.2)--(5.9,-1.4);
\draw [->](5.1,1)--(5.9,0.8);
\draw [->](5.1,0.7)--(5.9,-0);
\draw [->](5.1,-0.8)--(5.9,-0.6);
\draw [->](5.1,0.2)--(5.9,0.4);
\draw [->](5.1,-0.2)--(5.9,-1.1);
\draw (1+6,0+1.5) circle [radius=0.2];
\draw (0.5+6,0+1.5)--(0.8+6,0+1.5);
\draw (1.18+6,0.07+1.5)--(1.42+6,0.07+1.5);
\draw (1.18+6,-0.07+1.5)--(1.42+6,-0.07+1.5);
\draw (1.76+6,0.1+1.5)--(2+6,0.15+1.5);
\draw (1.76+6,-0.1+1.5)--(2+6,-0.15+1.5);
\draw (1.6+6,0+1.5) circle [radius=0.2];
\node [align=center,align=center] at (1+6,0+1.5) {$0$};
\node [align=center,align=center] at (1.6+6,0+1.5) {$0$};
\draw (1+9.4-2.8,0+0.5) circle [radius=0.2];
\draw (0.4+9.4-2.8,0+0.5) circle [radius=0.2];
\draw (1.17+9.4-2.8,0.1+0.5)--(1.4+9.4-2.8,0.15+0.5);
\draw (1.17+9.4-2.8,-0.1+0.5)--(1.4+9.4-2.8,-0.15+0.5);
\draw (0.6+9.4-2.8,0+0.5)--(0.8+9.4-2.8,0+0.5);
\draw (0.4+9.4-2.8,0.2+0.5)--(0.4+9.4-2.8,0.4+0.5);
\draw (0.24+9.4-2.8,0.1+0.5) .. controls (-0.1+9.4-2.8,0.2+0.5) and (-0.1+9.4-2.8,-0.2+0.5) ..  (0.24+9.4-2.8,-0.1+0.5);
\node [align=center,align=center] at (1+9.4-2.8,0+0.5) {$0$};
\node [align=center,align=center] at (0.4+9.4-2.8,0+0.5) {$0$};
\draw (1+3+3.6,0-0.5) circle [radius=0.2];
\draw (0.4+3+3.6,0-0.5) circle [radius=0.2];
\draw (1.17+3+3.6,0.1-0.5)--(1.4+3+3.6,0.15-0.5);
\draw (1.17+3+3.6,-0.1-0.5)--(1.4+3+3.6,-0.15-0.5);
\draw (0.6+3+3.6,0-0.5)--(0.8+3+3.6,0-0.5);
\draw (1.2+3+3.6,0-0.5)--(1.5+3+3.6,0-0.5);
\draw (0.24+3+3.6,0.1-0.5) .. controls (-0.1+3+3.6,0.2-0.5) and (-0.1+3+3.6,-0.2-0.5) ..  (0.24+3+3.6,-0.1-0.5);
\node [align=center,align=center] at (1+3+3.6,0-0.5) {$0$};
\node [align=center,align=center] at (0.4+3+3.6,0-0.5) {$0$};
\draw (1+5.8+1.2,0-1.5) circle [radius=0.2];
\draw (0.4+5.8+1.2,0-1.5) circle [radius=0.2];
\draw (-0.2+5.8+1.2,0-1.5) circle [radius=0.2];
\draw (1.17+5.8+1.2,0.1-1.5)--(1.4+5.8+1.2,0.15-1.5);
\draw (1.17+5.8+1.2,-0.1-1.5)--(1.4+5.8+1.2,-0.15-1.5);
\draw (0+5.8+1.2,0-1.5)--(0.2+5.8+1.2,0-1.5);
\draw (0.6+5.8+1.2,0-1.5)--(0.8+5.8+1.2,0-1.5);
\draw (0.4+5.8+1.2,0.2-1.5)--(0.4+5.8+1.2,0.4-1.5);
\node [align=center,align=center] at (1+5.8+1.2,0-1.5) {$0$};
\node [align=center,align=center] at (0.4+5.8+1.2,0-1.5) {$0$};
\node [align=center,align=center] at (-0.2+5.8+1.2,0-1.5) {$1$};
\draw [->](8.8,1.5)--(9.8,1.5);
\draw [->](8.8,1.2)--(9.8,0.4);
\draw [->](8.8,0.4)--(9.8,0.2);
\draw [->](8.8,0.2)--(9.8,-0.8);
\draw [->](8.8,-0.6)--(9.8,-1);
\draw [->](8.8,-1.4)--(9.8,-1.2);
\draw (1+1.8+8,0-0.1+1.2) circle [radius=0.2];
\draw (1.8+1.8+8,0-0.1+1.2) circle [radius=0.2];
\draw (1.4+1.8+8,0.45-0.1+1.2) circle [radius=0.2];
\draw (1.2+1.8+8,0-0.1+1.2)--(1.6+1.8+8,0-0.1+1.2);
\draw (0.5+1.8+8,0-0.1+1.2)--(0.8+1.8+8,0-0.1+1.2);
\draw (2+1.8+8,0-0.1+1.2)--(2.3+1.8+8,0-0.1+1.2);
\draw (1.14+1.8+8,0.14-0.1+1.2)--(1.26+1.8+8,0.31-0.1+1.2);
\draw (1.66+1.8+8,0.14-0.1+1.2)--(1.54+1.8+8,0.31-0.1+1.2);
\node [align=center,align=center] at (1+1.8+8,0-0.1+1.2) {$0$};
\node [align=center,align=center] at (1.8+1.8+8,0-0.1+1.2) {$0$};
\node [align=center,align=center] at (1.4+1.8+8,0.45-0.1+1.2) {$0$};
\draw (1.4+1.8+8,0.65-0.1+1.2)--(1.4+1.8+8,0.85-0.1+1.2);
\draw (1+8.8+2.2,0) circle [radius=0.2];
\draw (0.4+8.8+2.2,0) circle [radius=0.2];
\draw (-0.2+8.8+2.2,0) circle [radius=0.2];
\draw (1.17+8.8+2.2,0.1)--(1.4+8.8+2.2,0.15);
\draw (1.17+8.8+2.2,-0.1)--(1.4+8.8+2.2,-0.15);
\draw (0.6+8.8+2.2,0)--(0.8+8.8+2.2,0);
\draw (-0.7+8.8+2.2,0)--(-0.4+8.8+2.2,0);
\draw (-0.02+8.8+2.2,0.07)--(0.22+8.8+2.2,0.07);
\draw (-0.02+8.8+2.2,-0.07)--(0.22+8.8+2.2,-0.07);
\node [align=center,align=center] at (1+8.8+2.2,0) {$0$};
\node [align=center,align=center] at (0.4+8.8+2.2,0) {$0$};
\node [align=center,align=center] at (-0.2+8.8+2.2,0) {$0$};
\draw (1+11,-1.2) circle [radius=0.2];
\draw (0.4+11,0-1.2) circle [radius=0.2];
\draw (-0.2+11,0-1.2) circle [radius=0.2];
\draw (1.17+11,0.1-1.2)--(1.4+11,0.15-1.2);
\draw (1.17+11,-0.1-1.2)--(1.4+11,-0.15-1.2);
\draw (0+11,0-1.2)--(0.2+11,0-1.2);
\draw (0.6+11,0-1.2)--(0.8+11,0-1.2);
\draw (0.4+11,0.2-1.2)--(0.4+11,0.4-1.2);
\draw (-0.36+11,0.1-1.2) .. controls (-0.7+11,0.2-1.2) and (-0.7+11,-0.2-1.2) ..  (-0.36+11,-0.1-1.2);
\node [align=center,align=center] at (1+11,0-1.2) {$0$};
\node [align=center,align=center] at (0.4+11,0-1.2) {$0$};
\node [align=center,align=center] at (-0.2+11,0-1.2) {$0$};
\end{tikzpicture}
\end{equation*}
\end{Example}

Then we consider some examples with $g=2$.

\begin{Example}
Consider $(g,n)=(2,0)$.
We have:
\begin{equation*}
\begin{tikzpicture}
\draw (0.4,0) circle [radius=0.2];
\node [align=center,align=center] at (0.4,0) {$2$};
\draw [->] (1,0.15)--(1.8,0.4);
\draw [->] (1,-0.15)--(1.8,-0.4);
\draw (1+1.4+0.2,0+0.5) circle [radius=0.2];
\draw (0.84+1.4+0.2,0.1+0.5) .. controls (0.5+1.4+0.2,0.2+0.5) and (0.5+1.4+0.2,-0.2+0.5) ..  (0.84+1.4+0.2,-0.1+0.5);
\node [align=center,align=center] at (1+1.4+0.2,0+0.5) {$1$};
\draw [->] (3.6,0.5)--(4.5,0.5);
\draw [->] (3.6,-0.5)--(4.5,-0.5);
\draw [->] (3.6,0.3)--(4.5,-0.3);
\draw (1+2.6+0.2-1.4,0-0.5) circle [radius=0.2];
\draw (1.6+2.6+0.2-1.4,0-0.5) circle [radius=0.2];
\draw (1.2+2.6+0.2-1.4,0-0.5)--(1.4+2.6+0.2-1.4,0-0.5);
\node [align=center,align=center] at (1+2.6+0.2-1.4,0-0.5) {$1$};
\node [align=center,align=center] at (1.6+2.6+0.2-1.4,0-0.5) {$1$};
\draw (1+4.8-0.3,0+0.5) circle [radius=0.2];
\draw (0.84+4.8-0.3,0.1+0.5) .. controls (0.5+4.8-0.3,0.2+0.5) and (0.5+4.8-0.3,-0.2+0.5) ..  (0.84+4.8-0.3,-0.1+0.5);
\draw (1.16+4.8-0.3,0.1+0.5) .. controls (1.5+4.8-0.3,0.2+0.5) and (1.5+4.8-0.3,-0.2+0.5) ..  (1.16+4.8-0.3,-0.1+0.5);
\node [align=center,align=center] at (1+4.8-0.3,0+0.5) {$0$};
\draw (1+6.8-2,0-0.5) circle [radius=0.2];
\draw (0.4+6.8-2,0-0.5) circle [radius=0.2];
\draw (0.6+6.8-2,0-0.5)--(0.8+6.8-2,0-0.5);
\draw (1.16+6.8-2,0.1-0.5) .. controls (1.5+6.8-2,0.2-0.5) and (1.5+6.8-2,-0.2-0.5) ..  (1.16+6.8-2,-0.1-0.5);
\node [align=center,align=center] at (1+6.8-2,0-0.5) {$0$};
\node [align=center,align=center] at (0.4+6.8-2,0-0.5) {$1$};
\draw (1+8,0-0.5) circle [radius=0.2];
\draw (0.4+8,0-0.5) circle [radius=0.2];
\draw (0.6+8,0-0.5)--(0.8+8,0-0.5);
\draw (1.16+8,0.1-0.5) .. controls (1.5+8,0.2-0.5) and (1.5+8,-0.2-0.5) ..  (1.16+8,-0.1-0.5);
\draw (0.24+8,0.1-0.5) .. controls (-0.1+8,0.2-0.5) and (-0.1+8,-0.2-0.5) ..  (0.24+8,-0.1-0.5);
\node [align=center,align=center] at (1+8,0-0.5) {$0$};
\node [align=center,align=center] at (0.4+8,0-0.5) {$0$};
\draw (1+10.2+0.2-3.2,0+0.5) circle [radius=0.2];
\draw (1.2+10.2+0.2-3.2,0+0.5)--(1.4+10.2+0.2-3.2,0+0.5);
\draw (1.16+10.2+0.2-3.2,0.1+0.5)--(1.44+10.2+0.2-3.2,0.1+0.5);
\draw (1.16+10.2+0.2-3.2,-0.1+0.5)--(1.44+10.2+0.2-3.2,-0.1+0.5);
\draw (1.6+10.2+0.2-3.2,0+0.5) circle [radius=0.2];
\node [align=center,align=center] at (1+10.2+0.2-3.2,0+0.5) {$0$};
\node [align=center,align=center] at (1.6+10.2+0.2-3.2,0+0.5) {$0$};
\draw [->] (6.6,0.5)--(7.5,0.5);
\draw [->] (6.6,-0.5)--(7.5,-0.5);
\draw [->] (6.6,0.3)--(7.5,-0.3);
\end{tikzpicture}
\end{equation*}
\end{Example}

\begin{Example}
Consider $(g,n)=(2,1)$.
We have:
\begin{equation*}
\begin{tikzpicture}
\draw (0.05,0) circle [radius=0.2];
\draw (0.25,0)--(0.55,0);
\node [align=center,align=center] at (0.05,0) {$2$};
\draw [->] (-0.4,-0.3)--(-1.2,-0.7);
\draw [->] (0.6,-0.3)--(1.4,-0.7);
\draw (1+1.8-4.8,0-1) circle [radius=0.2];
\draw (1.2+1.8-4.8,0-1)--(1.5+1.8-4.8,0-1);
\draw (0.84+1.8-4.8,0.1-1) .. controls (0.5+1.8-4.8,0.2-1) and (0.5+1.8-4.8,-0.2-1) ..  (0.84+1.8-4.8,-0.1-1);
\node [align=center,align=center] at (1+1.8-4.8,0-1) {$1$};
\draw (1+3.2-2.2,0-1) circle [radius=0.2];
\draw (1.2+3.2-2.2,0-1)--(1.4+3.2-2.2,0-1);
\draw (1.8+3.2-2.2,0-1)--(2.1+3.2-2.2,0-1);
\draw (1.6+3.2-2.2,0-1) circle [radius=0.2];
\node [align=center,align=center] at (1+3.2-2.2,0-1) {$1$};
\node [align=center,align=center] at (1.6+3.2-2.2,0-1) {$1$};
\draw [->] (-2.5,-1.4)--(-3.8,-2);
\draw [->] (-2.1,-1.4)--(-2.2,-2);
\draw [->] (-1.7,-1.4)--(-0.3,-2);
\draw [->] (-1.2,-1.4)--(2,-2);
\draw [->] (1.8,-1.4)--(0.5,-2);
\draw [->] (2.4,-1.4)--(2.7,-2);
\draw [->] (3,-1.4)--(4.5,-2);
\draw (1+2+2+1.2,0-2.5) circle [radius=0.2];
\draw (0.4+2+2+1.2,0-2.5) circle [radius=0.2];
\draw (-0.2+2+2+1.2,0-2.5) circle [radius=0.2];
\draw (0+2+2+1.2,0-2.5)--(0.2+2+2+1.2,0-2.5);
\draw (0.6+2+2+1.2,0-2.5)--(0.8+2+2+1.2,0-2.5);
\draw (0.4+2+2+1.2,0.2-2.5)--(0.4+2+2+1.2,0.4-2.5);
\node [align=center,align=center] at (1+2+2+1.2,0-2.5) {$1$};
\node [align=center,align=center] at (0.4+2+2+1.2,0-2.5) {$0$};
\node [align=center,align=center] at (-0.2+2+2+1.2,0-2.5) {$1$};
\draw (1-0.6+2.6,0-2.5) circle [radius=0.2];
\draw (0.4-0.6+2.6,0-2.5) circle [radius=0.2];
\draw (0.6-0.6+2.6,0-2.5)--(0.8-0.6+2.6,0-2.5);
\draw (1-0.6+2.6,0.2-2.5)--(1-0.6+2.6,0.4-2.5);
\draw (1.16-0.6+2.6,0.1-2.5) .. controls (1.5-0.6+2.6,0.2-2.5) and (1.5-0.6+2.6,-0.2-2.5) ..  (1.16-0.6+2.6,-0.1-2.5);
\node [align=center,align=center] at (1-0.6+2.6,0-2.5) {$0$};
\node [align=center,align=center] at (0.4-0.6+2.6,0-2.5) {$1$};
\draw (1+7.9-8.4,0-2.5) circle [radius=0.2];
\draw (0.4+7.9-8.4,0-2.5) circle [radius=0.2];
\draw (0.6+7.9-8.4,0-2.5)--(0.8+7.9-8.4,0-2.5);
\draw (-0.1+7.9-8.4,0-2.5)--(0.2+7.9-8.4,0-2.5);
\draw (1.16+7.9-8.4,0.1-2.5) .. controls (1.5+7.9-8.4,0.2-2.5) and (1.5+7.9-8.4,-0.2-2.5) ..  (1.16+7.9-8.4,-0.1-2.5);
\node [align=center,align=center] at (1+7.9-8.4,0-2.5) {$0$};
\node [align=center,align=center] at (0.4+7.9-8.4,0-2.5) {$1$};
\draw (1+9.4-13,0-2.5) circle [radius=0.2];
\draw (1.6+9.4-13,0-2.5) circle [radius=0.2];
\draw (1.18+9.4-13,0.07-2.5)--(1.42+9.4-13,0.07-2.5);
\draw (1.18+9.4-13,-0.07-2.5)--(1.42+9.4-13,-0.07-2.5);
\draw (1.8+9.4-13,-0-2.5)--(2.1+9.4-13,0-2.5);
\node [align=center,align=center] at (1+9.4-13,0-2.5) {$1$};
\node [align=center,align=center] at (1.6+9.4-13,0-2.5) {$0$};
\draw (1+5.6-11,-2.5) circle [radius=0.2];
\draw (1+5.6-11,0.2-2.5)--(1+5.6-11,0.4-2.5);
\draw (0.84+5.6-11,0.1-2.5) .. controls (0.5+5.6-11,0.2-2.5) and (0.5+5.6-11,-0.2-2.5) ..  (0.84+5.6-11,-0.1-2.5);
\draw (1.16+5.6-11,0.1-2.5) .. controls (1.5+5.6-11,0.2-2.5) and (1.5+5.6-11,-0.2-2.5) ..  (1.16+5.6-11,-0.1-2.5);
\node [align=center,align=center] at (1+5.6-11,0-2.5) {$0$};
\draw [->] (-4.5,-2.8)--(-4.5,-3.55);
\draw [->] (-4.1,-2.8)--(-2.7,-3.55);
\draw [->] (-2.2,-2.8)--(-2.2,-3.55);
\draw [->] (-1.5,-2.8)--(-0.3,-3.55);
\draw [->] (2.1,-2.8)--(0.8,-3.55);
\draw [->] (2.8,-2.8)--(2.8,-3.55);
\draw [->] (-3.7,-2.8)--(2,-3.55);
\draw [->] (0.3,-2.8)--(2.4,-3.55);
\draw [->] (1,-2.8)--(4.5,-3.55);
\draw [->] (3.5,-2.8)--(5.1,-3.55);
\draw [->] (5.6,-2.8)--(5.6,-3.55);
\draw (1-5.8,0-4) circle [radius=0.2];
\draw (1.2-5.8,0-4)--(1.4-5.8,0-4);
\draw (1.8-5.8,0-4)--(2.1-5.8,0-4);
\draw (1.16-5.8,0.1-4)--(1.44-5.8,0.1-4);
\draw (1.16-5.8,-0.1-4)--(1.44-5.8,-0.1-4);
\draw (1.6-5.8,0-4) circle [radius=0.2];
\node [align=center,align=center] at (1-5.8,0-4) {$0$};
\node [align=center,align=center] at (1.6-5.8,0-4) {$0$};
\draw (1,0-4) circle [radius=0.2];
\draw (0.4,0-4) circle [radius=0.2];
\draw (-0.2,0-4) circle [radius=0.2];
\draw (0.6,0-4)--(0.8,0-4);
\draw (-0.7,0-4)--(-0.4,0-4);
\draw (-0.02,0.07-4)--(0.22,0.07-4);
\draw (-0.02,-0.07-4)--(0.22,-0.07-4);
\node [align=center,align=center] at (1,0-4) {$1$};
\node [align=center,align=center] at (0.4,0-4) {$0$};
\node [align=center,align=center] at (-0.2,0-4) {$0$};
\draw (1+6.3-9.2,0-4) circle [radius=0.2];
\draw (0.4+6.3-9.2,0-4) circle [radius=0.2];
\draw (0.58+6.3-9.2,0.07-4)--(0.82+6.3-9.2,0.07-4);
\draw (0.58+6.3-9.2,-0.07-4)--(0.82+6.3-9.2,-0.07-4);
\draw (-0.1+6.3-9.2,0-4)--(0.2+6.3-9.2,0-4);
\draw (1.16+6.3-9.2,0.1-4) .. controls (1.5+6.3-9.2,0.2-4) and (1.5+6.3-9.2,-0.2-4) ..  (1.16+6.3-9.2,-0.1-4);
\node [align=center,align=center] at (1+6.3-9.2,0-4) {$0$};
\node [align=center,align=center] at (0.4+6.3-9.2,0-4) {$0$};
\draw (1+4-1.8,0-4) circle [radius=0.2];
\draw (0.4+4-1.8,0-4) circle [radius=0.2];
\draw (0.6+4-1.8,0-4)--(0.8+4-1.8,0-4);
\draw (1+4-1.8,0.2-4)--(1+4-1.8,0.4-4);
\draw (1.16+4-1.8,0.1-4) .. controls (1.5+4-1.8,0.2-4) and (1.5+4-1.8,-0.2-4) ..  (1.16+4-1.8,-0.1-4);
\draw (0.24+4-1.8,0.1-4) .. controls (-0.1+4-1.8,0.2-4) and (-0.1+4-1.8,-0.2-4) ..  (0.24+4-1.8,-0.1-4);
\node [align=center,align=center] at (1+4-1.8,0-4) {$0$};
\node [align=center,align=center] at (0.4+4-1.8,0-4) {$0$};
\draw (1+3.5+1.9,0-4) circle [radius=0.2];
\draw (0.4+3.5+1.9,0-4) circle [radius=0.2];
\draw (-0.2+3.5+1.9,0-4) circle [radius=0.2];
\draw (0+3.5+1.9,0-4)--(0.2+3.5+1.9,0-4);
\draw (0.6+3.5+1.9,0-4)--(0.8+3.5+1.9,0-4);
\draw (0.4+3.5+1.9,0.2-4)--(0.4+3.5+1.9,0.4-4);
\node [align=center,align=center] at (1+3.5+1.9,0-4) {$1$};
\node [align=center,align=center] at (0.4+3.5+1.9,0-4) {$0$};
\node [align=center,align=center] at (-0.2+3.5+1.9,0-4) {$0$};
\draw (-0.36+3.5+1.9,0.1-4) .. controls (-0.7+3.5+1.9,0.2-4) and (-0.7+3.5+1.9,-0.2-4) ..  (-0.36+3.5+1.9,-0.1-4);
\draw [->] (-4.4,-4.35)--(-3,-5.15);
\draw [->] (-2.3,-4.35)--(-2.3,-5.15);
\draw [->] (0.3,-4.35)--(0.3,-5.15);
\draw [->] (-1.9,-4.35)--(-0.2,-5.15);
\draw [->] (2.6,-4.35)--(0.8,-5.15);
\draw [->] (3.1,-4.35)--(3.1,-5.15);
\draw [->] (5.5,-4.35)--(3.9,-5.15);
\draw (1+2-6,0-0.1-6) circle [radius=0.2];
\draw (1.18+2-6,0.07-0.1-6)--(1.62+2-6,0.07-0.1-6);
\draw (1.18+2-6,-0.07-0.1-6)--(1.62+2-6,-0.07-0.1-6);
\draw (1.8+2-6,0-0.1-6) circle [radius=0.2];
\draw (1.4+2-6,0.45-0.1-6) circle [radius=0.2];
\draw (1.14+2-6,0.14-0.1-6)--(1.26+2-6,0.31-0.1-6);
\draw (1.66+2-6,0.14-0.1-6)--(1.54+2-6,0.31-0.1-6);
\node [align=center,align=center] at (1+2-6,0-0.1-6) {$0$};
\node [align=center,align=center] at (1.8+2-6,0-0.1-6) {$0$};
\draw (1.4+2-6,0.65-0.1-6)--(1.4+2-6,0.85-0.1-6);
\node [align=center,align=center] at (1.4+2-6,0.45-0.1-6) {$0$};
\draw (1+8.9-9,-5.8) circle [radius=0.2];
\draw (0.4+8.9-9,-5.8) circle [radius=0.2];
\draw (-0.2+8.9-9,-5.8) circle [radius=0.2];
\draw (0.6+8.9-9,0-5.8)--(0.8+8.9-9,0-5.8);
\draw (-0.7+8.9-9,0-5.8)--(-0.4+8.9-9,0-5.8);
\draw (-0.02+8.9-9,0.07-5.8)--(0.22+8.9-9,0.07-5.8);
\draw (-0.02+8.9-9,-0.07-5.8)--(0.22+8.9-9,-0.07-5.8);
\node [align=center,align=center] at (1+8.9-9,0-5.8) {$0$};
\node [align=center,align=center] at (0.4+8.9-9,0-5.8) {$0$};
\node [align=center,align=center] at (-0.2+8.9-9,0-5.8) {$0$};
\draw (1.16+8.9-9,0.1-5.8) .. controls (1.5+8.9-9,0.2-5.8) and (1.5+8.9-9,-0.2-5.8) ..  (1.16+8.9-9,-0.1-5.8);
\draw (1+2.6,0-5.8) circle [radius=0.2];
\draw (0.4+2.6,0-5.8) circle [radius=0.2];
\draw (1.6+2.6,0-5.8) circle [radius=0.2];
\draw (0.6+2.6,0-5.8)--(0.8+2.6,0-5.8);
\draw (1.2+2.6,0-5.8)--(1.4+2.6,0-5.8);
\draw (1+2.6,0.2-5.8)--(1+2.6,0.4-5.8);
\draw (1.76+2.6,0.1-5.8) .. controls (2.1+2.6,0.2-5.8) and (2.1+2.6,-0.2-5.8) ..  (1.76+2.6,-0.1-5.8);
\draw (0.24+2.6,0.1-5.8) .. controls (-0.1+2.6,0.2-5.8) and (-0.1+2.6,-0.2-5.8) ..  (0.24+2.6,-0.1-5.8);
\node [align=center,align=center] at (1+2.6,0-5.8) {$0$};
\node [align=center,align=center] at (0.4+2.6,0-5.8) {$0$};
\node [align=center,align=center] at (1.6+2.6,0-5.8) {$0$};
\end{tikzpicture}
\end{equation*}
\end{Example}

\end{appendices}

\end{document}